\documentclass[12pt, a4paper, abstracton]{scrartcl}
\pdfoutput=1

\usepackage{amsmath, amsthm, amsfonts, amssymb, comment}
\usepackage[utf8]{inputenc}
\usepackage[nottoc]{tocbibind} 
\usepackage{color}
\usepackage{slashed} 
\usepackage[all, cmtip]{xy}
\usepackage{hyperref} 
\usepackage{chngcntr} 
\usepackage{graphicx} 
\usepackage[english]{babel}
\usepackage{microtype}
\usepackage{bm} 
\usepackage{mathtools} 
\usepackage{mathabx}

\newcommand{\IN}{\mathbb{N}}
\newcommand{\IZ}{\mathbb{Z}}
\newcommand{\IQ}{\mathbb{Q}}
\newcommand{\IR}{\mathbb{R}}
\newcommand{\IC}{\mathbb{C}}

\newcommand{\calS}{\mathcal{S}}

\newcommand{\IK}{\mathfrak{K}}

\newcommand{\GL}{\operatorname{GL}}
\newcommand{\id}{\operatorname{id}}

\newcommand{\image}{\operatorname{im}}

\newcommand{\ab}{\mathrm{ab}}

\newcommand{\trace}{\operatorname{tr}}
\newcommand{\ch}{\operatorname{ch}}

\newcommand{\HC}{H \! C}
\newcommand{\HP}{H \! P}
\newcommand{\HH}{H \! H}
\newcommand{\PHC}{P \! H \! C}

\newcommand{\Poincare}{Poincar\'{e} }

\newcommand{\alg}{\mathrm{alg}}
\newcommand{\toprm}{\mathrm{top}}

\newcommand{\EG}{\underline{EG}}

\newcommand{\fin}{\mathrm{fin}}
\newcommand{\hd}{\mathrm{hd}}
\newcommand{\cd}{\mathrm{cd}}
\newcommand{\gd}{\mathrm{gd}}
\newcommand{\CAT}{\mathrm{CAT}}
\newcommand{\chSG}{\operatorname{ch}^{\calS G}}
\newcommand{\asdim}{\operatorname{as-dim}}
\newcommand{\gdunder}{\underline{\smash{\operatorname{gd}}}}
\newcommand{\EGunder}{\underline{EG}}

\newcommand{\KH}{K \! H}

\def\colim{\mathop{\mathrm{colim}}\nolimits}

\newcommand{\Michal}[1]{{\textcolor{blue}{#1}}}

\newtheorem{thm}{Theorem}[section]
\newtheorem*{thm*}{Theorem}
\newtheorem*{thmA*}{Theorem A}
\newtheorem*{thmB*}{Theorem B}
\newtheorem*{mainthm*}{Main Theorem}

\newtheorem*{roesthm*}{Roe's Index Theorem}

\newtheorem*{atiyahsingerthm*}{Atiyah--Singer Index Theorem}

\newtheorem{cor}[thm]{Corollary}
\newtheorem*{cor*}{Corollary}
\newtheorem{lem}[thm]{Lemma}
\newtheorem{prop}[thm]{Proposition}
\newtheorem{conj}[thm]{Conjecture}
\newtheorem*{conj*}{Conjecture}
\newtheorem*{conjA*}{Conjecture A}
\newtheorem*{conjB*}{Conjecture B}
\newtheorem*{obs*}{Observation}

\theoremstyle{definition}
\newtheorem{rem}[thm]{Remark}
\newtheorem*{rem*}{Remark}

\newtheorem{defn}[thm]{Definition}

\newtheorem{nota}[thm]{Notation}
\newtheorem*{GBC*}{Burghelea Conjecture}

\numberwithin{equation}{section}

\counterwithout{footnote}{section}

\setcounter{secnumdepth}{3} 

\makeatletter
\def\blfootnote{\gdef\@thefnmark{}\@footnotetext}
\makeatother

\begin{document}

\title{Burghelea conjecture and\\ asymptotic dimension of groups}
\author{Alexander Engel\thanks{Fakult\"{a}t f\"{u}r Mathematik, Universit\"{a}t Regensburg, 93040 Regensburg, GERMANY\newline
alexander.engel@mathematik.uni-regensburg.de}
\and
Micha\l{} Marcinkowski\thanks{Fakult\"{a}t f\"{u}r Mathematik, Universit\"{a}t Regensburg, 93040 Regensburg, GERMANY\newline
\hspace*{-0.5em}Instytut Matematyczny, Uniwersytet Wroc\l{}awski, 50-384 Wroc\l{}aw, POLAND\newline
michal.marcinkowski@mathematik.uni-regensburg.de}}
\date{}
\maketitle


\begin{abstract}
We review the Burghelea conjecture, which constitutes a full computation of the periodic cyclic homology of complex group rings, and its relation to the algebraic Baum--Connes conjecture. The Burghelea conjecture implies the Bass conjecture.

We state two conjectures about groups of finite asymptotic dimension, which together imply the Burghelea conjecture for such groups.
We prove both conjectures for many classes of groups.


It is known that the Burghelea conjecture does not hold for all groups, although no finitely presentable counter-example was known. We construct a finitely presentable (even type $F_\infty$) counter-example based on Thompson's group $F$. We construct as well a finitely generated counter-example with finite decomposition complexity.
\end{abstract}

\tableofcontents

\paragraph{Acknowledgements} The authors were supported by the SFB 1085 ``Higher Invariants'' funded by the Deutsche Forschungsgemeinschaft DFG.

The first named author was also supported by the Research Fellowship EN 1163/1-1 ``Mapping Analysis to Homology'' of the Deutsche Forschungsgemeinschaft DFG, and he acknowledges the hospitality of the Hausdorff Research Institute for Mathematics in Bonn during the workshop on the Farrell--Jones conjecture, which was part of the Junior Hausdorff Trimester Program ``Topology''.

The second named author was partially supported by Polish National Science Center (ncn) grant 2012/06/A/ST/1/00259.

The authors thank Ulrich Bunke, Guillermo Corti\~{n}as, Stefan Friedl, Holger Kammeyer, Wolfgang L\"{u}ck and Roman Sauer for helpful discussions and answering a lot of questions.

\section{Introduction}

Burghelea \cite{burghelea} computed the periodic cyclic homology of complex group rings:
\[\PHC_\ast(\IC G) \cong \bigoplus_{[g] \in \langle G \rangle^{\fin}} H_{[\ast]}(C_g;\IC) \oplus \bigoplus_{[g] \in \langle G \rangle^{\infty}} T^G_\ast(g;\IC),\]
where $\langle G \rangle^{\fin}$ and $\langle G \rangle^{\infty}$ denote the conjugacy classes in $G$ of finite order elements, resp., of infinite order elements. We have used the notation $H_{[\ast]}(C_g;\IC) := \prod_{i \in \IZ} H_{\ast+2i}(C_g;\IC)$ for the centralizer $C_g$ of the element $g \in G$, and we have
\begin{equation}
\label{eqnjk3498fe}
T^G_\ast(g;\IC) := \varprojlim \big( \cdots \xrightarrow{S} H_{\ast+2n}(C_g/\langle g\rangle;\IC) \xrightarrow{S} H_{\ast+2n-2}(C_g/\langle g\rangle;\IC) \xrightarrow{S} \cdots \big),
\end{equation}
where $S$ is the Gysin homomorphism of the fibration $B\langle g \rangle = S^1 \to B C_g \to B C_g/\langle g\rangle$.

\begin{GBC*}\label{conj:bur}
We will say that $G$ satisfies the Burghelea conjecture if the group $\bigoplus_{[g] \in \langle G \rangle^{\infty}} T^G_\ast(g;\IC)$ vanishes.
\end{GBC*}

Groups for which the Burghelea conjecture is known are, e.g., hyperbolic and arithmetic groups (Ji \cite{ji_nilpotency}) and linear groups over fields of characteristic zero and having finite rational homological dimension (Eckmann \cite{eckmann}). A thorough compilation of known results about the Burghelea conjecture will be given in Remark~\ref{jkdsf9023}.

The Burghelea conjecture is known to be false in general, i.e., there is a counter-example constructed by Burghelea \cite{burghelea}. His counter-example is not finitely generated, but recently Dranishnikov--Hull \cite{DHull} came up with a counter-example which is finitely generated. In Section~\ref{sec:counter.examples} we will construct more counter-examples with better properties, especially, we construct a finitely presented one in Section~\ref{sec:F} (it will be even of type~$F_\infty$).

Actually, Burghelea conjectured the vanishing of $\bigoplus_{[g] \in \langle G \rangle^{\infty}} T^G_\ast(g;\IC)$ only for groups admitting a finite classifying space. We took the freedom to use the term ``Burghelea conjecture'' in the more general context of all groups.

\begin{obs*}
The Burghelea conjecture implies the strong Bass conjecture.
\end{obs*}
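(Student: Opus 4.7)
The plan is to use the Chern character $\ch: K_0(\IC G) \to \PHC_0(\IC G)$ together with Burghelea's decomposition, exploiting the fact that the Hattori--Stallings rank is the degree-zero shadow of the Chern character. The strong Bass conjecture asserts that for every finitely generated projective $\IC G$-module $P$, the Hattori--Stallings rank
\[r_P \in HH_0(\IC G) \cong \IC G/[\IC G,\IC G]\]
is supported on conjugacy classes of torsion elements; equivalently, $\tau_g(r_P) = 0$ for every $g\in G$ of infinite order, where $\tau_g$ reads off the coefficient of $[g]$.

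First I would record that $HH_0(\IC G) = HC_0(\IC G)$ by the Connes SBI sequence (the neighbouring terms $HC_{-1}$, $HC_{-2}$, $HH_{-1}$ all vanish), and that the Hattori--Stallings map factors as
\[K_0(\IC G) \xrightarrow{\ch} \PHC_0(\IC G) \xrightarrow{\mathrm{pr}} HC_0(\IC G) = HH_0(\IC G), \qquad [P] \mapsto r_P,\]
where $\mathrm{pr}$ is the natural projection to the degree-zero level of the periodic tower.

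Next, I would invoke Burghelea's parallel computation of $HC_\ast$, according to which $HC_\ast(\IC G)_{[g]} \cong H_\ast(C_g/\langle g\rangle;\IC)$ on each infinite-order summand, with the $S$-operator acting as the Gysin map of the fibration $B\langle g\rangle = S^1 \to BC_g \to BC_g/\langle g\rangle$; this is precisely the inverse system whose limit defines $T^G_\ast(g;\IC)$. Hence $\mathrm{pr}$ specializes on this summand to the canonical map
\[T^G_0(g;\IC) = \varprojlim_n H_{2n}(C_g/\langle g\rangle;\IC) \longrightarrow H_0(C_g/\langle g\rangle;\IC) \cong \IC\]
out of the inverse limit, and chasing the SBI maps through Burghelea's identifications matches the resulting composition $K_0(\IC G) \to \IC$ with $[P]\mapsto\tau_g(r_P)$.

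Granted these identifications, the Burghelea conjecture gives $T^G_0(g;\IC) = 0$ for every infinite-order $g$, so the $[g]$-summand of $\ch([P])$ vanishes and $\tau_g(r_P) = 0$, proving the strong Bass conjecture. I expect the main obstacle to be precisely the compatibility check in the preceding paragraph: verifying that Burghelea's decompositions of $HH_\ast$, $HC_\ast$ and $\PHC_\ast$ are mutually compatible under the maps of the SBI sequence, and that the $[g]$-summand of $HC_0$ really matches the naive $[g]$-coefficient in $\IC G/[\IC G,\IC G]$. This is essentially built into Burghelea's original paper, but the bookkeeping with signs, normalizations and inverse-limit conventions is the technical heart of the implication.
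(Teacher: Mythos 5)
Your proposal is correct, and it is a genuinely more direct route than the one the paper takes. The paper proves the implication in two stages: first, the Burghelea conjecture trivially implies what they call the ``Bass conjecture for $\calS G$'' (if the summand $\bigoplus_{[g]\in\langle G\rangle^\infty} T^G_\ast(g;\IC)$ vanishes, everything automatically lands in the finite-order part); second, the Bass conjecture for $\calS G$ implies the strong Bass conjecture via commutativity of the square
\[\xymatrix{
K_0^\alg(\IC G) \ar[r] \ar[d] & \HH_0(\IC G) \ar@{=}[r] & \HC_0(\IC G)\\
K_0^\alg(\calS G) \ar[rr] & & \PHC_0(\IC G) \ar[u]}\]
whose bottom arrow is the trace of the Connes--Karoubi character on $\calS^1 G$. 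Your argument bypasses the Schatten-class machinery entirely, instead using the direct cyclic Chern character $K_0(\IC G)\to\PHC_0(\IC G)$ (i.e.\ the compatible tower $\ch_n\colon K_0\to\HC_{2n}$ with $S\ch_n=\ch_{n-1}$) and its projection to $\HC_0=\HH_0$. Both approaches hinge on exactly the same technical input — that Burghelea's decompositions of $\HH_\ast$, $\HC_\ast$ and $\PHC_\ast(\IC G)$ are mutually compatible under the SBI maps, so that the projection $\PHC_0\to\HC_0$ restricted to the $[g]$-summand for infinite-order $g$ is the canonical map out of the inverse limit $T^G_0(g;\IC)\to H_0(C_g/\langle g\rangle;\IC)$ — and you correctly identify this as the technical heart; it is indeed in Burghelea's paper (and Loday's book, \S 7.4). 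What the paper's longer detour buys is the intermediate statement ``Bass conjecture for $\calS G$,'' which they then exploit for the idempotent conjecture and to fit the whole picture into the algebraic Baum--Connes framework; your route is leaner if the only goal is the strong Bass conjecture.
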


We will discuss in Section~\ref{secj23223} how the Burghelea conjecture implies the strong Bass conjecture. Our discussion will rely on the rationalization of the algebraic Baum--Connes assembly map done in Section~\ref{secdsj239233}.

The main purpose of this paper is to prove the Burghelea conjecture for more classes of groups. Concretely, we have the following main result:

\begin{thmA*}\label{thmA}
The Burghelea conjecture is true for the following classes of groups:
\begin{itemize}
\item $\CAT(0)$ groups with finite rational homological dimension\footnote{This includes Coxeter groups, right-angeled Artin groups and virtually torsion-free $\CAT(0)$ groups.},
\item systolic groups,
\item mapping class groups,
\item discrete subgroups of almost connected Lie groups, and
\item $3$-manifold groups.
\end{itemize}
Furthermore, the Burghelea conjecture is closed under passage to relatively hyperbolic supergroups.
\end{thmA*}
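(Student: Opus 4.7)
The common reduction I would aim for, across all six items, is this: the inverse limit
\[T^G_\ast(g;\IC) = \varprojlim_n H_{\ast+2n}(C_g/\langle g\rangle;\IC)\]
vanishes as soon as $C_g/\langle g\rangle$ has finite rational (co)homological dimension, because then the tower is eventually zero. So the whole theorem reduces, in each class, to showing that for every infinite-order element $g\in G$ the quotient of its centralizer by $\langle g\rangle$ has finite rational (co)homological dimension. This fits the two-conjecture framework announced in the abstract: one conjecture bounds rational homological dimension in terms of asymptotic dimension, and the other says $C_g/\langle g\rangle$ is controlled whenever $G$ is. The strategy is therefore to verify the second (centralizer) statement class by class, while the first (dimension bound) is handled once by the general input already available for the listed classes.

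For $\CAT(0)$ groups of finite rational homological dimension and for systolic groups, every infinite-order element is semisimple and has an invariant geodesic/minset. The flat torus theorem, respectively the analogous splitting of centralizers in systolic complexes due to Januszkiewicz--\'{S}wi\k{a}tkowski and Osajda, yields that $C_g$ is commensurable with $\langle g\rangle \times Q$ for some $Q$ acting on a $\CAT(0)$ or systolic complex of strictly smaller dimension, giving the desired bound on the rational homological dimension of $C_g/\langle g\rangle$. For $3$-manifold groups I would use geometrization to reduce to the pieces: centralizers in hyperbolic pieces are virtually cyclic and in Seifert/Sol pieces virtually $\IZ^2$ or $\IZ^3$, and Bass--Serre theory for the graph of groups decomposition produces a uniform finite bound.

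The mapping class group case and the Lie-group case are somewhat different. For mapping class groups I would use the Nielsen--Thurston classification together with finiteness of the virtual cohomological dimension: pseudo-Anosov elements have virtually cyclic centralizers, and for a reducible element $g$ the centralizer fits, after passage to a canonical reduction system, into a sequence where the quotient by $\langle g\rangle$ is built out of mapping class groups of subsurfaces of strictly smaller complexity, so a simple complexity induction closes the argument. For discrete subgroups of almost connected Lie groups I would invoke Eckmann's theorem, which already gives the Burghelea conjecture for linear groups of finite rational homological dimension, and reduce to that using the standard structure theory (semisimple part plus solvable radical, and lattices have finite rational cohomological dimension by Mostow/Borel--Serre).

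Finally, for closure under relatively hyperbolic supergroups, suppose $G$ is hyperbolic relative to $\{H_i\}$, each of which satisfies Burghelea. Every infinite-order $g\in G$ is either loxodromic, in which case $C_g$ is virtually cyclic and $T^G_\ast(g;\IC)=0$ trivially, or conjugate into some $H_i$, in which case $C_G(g)$ is commensurable with $C_{H_i}(g)$ and the corresponding term reduces to one already known to vanish by the hypothesis on $H_i$. The hardest step, and the one I would expect to absorb most of the technical work, is the mapping class group case: keeping track of the rational homological dimension of $C_g/\langle g\rangle$ through the iterated canonical reduction and verifying that the resulting bound is in fact finite (rather than only virtually finite in a weaker sense) is the delicate point, since the ambient mapping class group itself has only finite virtual cohomological dimension and one must propagate that finiteness correctly through extensions.
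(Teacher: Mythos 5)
Your overall reduction is exactly the paper's: $T^G_\ast(g;\IC)=0$ as soon as $\hd_\IQ(C_g/\langle g\rangle)<\infty$, because the Gysin tower is eventually zero, so one verifies class by class that reduced centralizers have finite rational homological dimension (together with $\hd_\IQ(G)<\infty$, i.e.\ membership in $E(\IQ)$). The systolic, mapping class group, $3$-manifold, and relatively hyperbolic arguments all match the paper's in spirit, including the use of the Osajda--Prytu{\l}a description of systolic centralizers, the Nielsen--Thurston / canonical reduction system induction for mapping class groups, the Seifert-fibered reduction for irreducible $3$-manifolds, and the loxodromic/parabolic dichotomy (via Osin) for relatively hyperbolic supergroups.

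Two places diverge, and one of them is a genuine gap. In the $\CAT(0)$ case you claim the Flat Torus Theorem gives $C_g$ commensurable with $\langle g\rangle\times Q$ with $Q$ acting on a $\CAT(0)$ space of \emph{strictly smaller dimension}, and then bound $\hd_\IQ(C_g/\langle g\rangle)$ by that smaller dimension. The dimension-reduction claim is not justified (the minset/cross-section need not be lower dimensional) and it is not what is needed: the paper only shows $H/\langle g\rangle$ embeds as a subgroup of $H\le C_g\le G$ for a finite-index $H\le C_g$ containing $\langle g\rangle$ as a direct factor, so $\hd_\IQ(C_g/\langle g\rangle)\le\hd_\IQ(G)+1$ by Lemmas~\ref{lem:efs000} and~\ref{lemjsd9823ds}. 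This is why the hypothesis $\hd_\IQ(G)<\infty$ appears in the statement: the bound comes from $G$ itself, not from a smaller space.

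The more serious gap is the Lie group case. You propose to invoke Eckmann's theorem for linear groups of finite rational homological dimension after reducing via solvable radical and semi-simple part, but this does not suffice: almost connected Lie groups and, crucially, their semi-simple parts need not be linear (e.g.\ the universal cover of $SL_2(\IR)$), so a discrete subgroup of the semi-simple quotient need not be a linear group over a field of characteristic zero. The paper fills this in by a further reduction: after passing to the connected case, the solvable radical $R$, and the semi-simple quotient $G/R$, one quotients the semi-simple group $S$ by its (discrete) center $Z$ so that $S/Z$ has a faithful adjoint representation and is therefore linear; only at that final stage can Eckmann's linear-group theorem be applied (Lemma~\ref{lemjns4t3}), while the solvable case is handled separately via the nilradical and Eckmann's nilpotent/abelian results (Lemma~\ref{lemjnsdf234t3}). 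Also note the paper treats arbitrary discrete subgroups via $K\backslash G$ being a finite-dimensional model for $\EGunder$, not just lattices, so the Borel--Serre/Mostow reference is both unnecessary and too restrictive. Without the passage to the adjoint group and the separate solvable-radical argument, the Lie group case does not close.
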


All groups in the above list have (conjecturally) finite asymptotic dimension. Moreover, for other classes of groups that have finite asymptotic dimension, like hyperbolic groups or solvable groups of finite Hirsch length, the Burghelea conjecture is also known. So the conjecture is that having finite asymptotic dimension should imply the Burghelea conjecture. We will break up this single conjecture into two pieces which together imply the Burghelea conjecture for groups of finite asymptotic dimension.

\begin{conjA*}
Let $G$ be a discrete group of finite asymptotic dimension.

Then its rational homological dimension is finite, i.e., $\hd_\IQ(G) < \infty$.
\end{conjA*}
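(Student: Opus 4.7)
The plan is to deduce $\hd_\IQ(G) < \infty$ from the finiteness of the geometric dimension $\gdunder(G)$ of a classifying space $\EGunder$ for proper actions of $G$. Indeed, if $\EGunder$ admits a model of finite dimension $d$, its cellular chain complex is an exact sequence
\[0 \to C_d(\EGunder) \to \cdots \to C_0(\EGunder) \to \IZ \to 0\]
of $\IZ G$-modules in which each $C_i$ is a direct sum of permutation modules $\IZ[G/H_\alpha]$ with all isotropy groups $H_\alpha$ finite. Tensoring with $\IQ$ over $\IZ$ and invoking Maschke's theorem, each summand $\IQ[G/H_\alpha] \cong \IQ G \otimes_{\IQ H_\alpha} \IQ$ becomes a direct summand of $\IQ G$, hence a projective $\IQ G$-module. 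This yields a projective resolution of $\IQ$ over $\IQ G$ of length at most $d$, and so $\hd_\IQ(G) \le d < \infty$.

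The remaining task is to build a finite-dimensional model of $\EGunder$ from $\asdim G < \infty$. The basic tool is the anti-\v{C}ech system: by definition of asymptotic dimension, $G$ admits a cofinal sequence of uniformly bounded covers of multiplicity at most $\asdim(G)+1$, whose nerves are simplicial complexes of dimension at most $\asdim(G)$. For torsion-free $G$ with a sufficiently nice coarse structure one expects a suitable Rips-type thickening of such a nerve, together with the equivariant combinatorics of $G$, to give a cocompact contractible $G$-complex, leading to a bound of the form $\hd_\IQ(G) \le \asdim(G) + C$ for a small constant $C$.

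The main obstacle is the interplay between torsion and the anti-\v{C}ech construction. Groups of finite asymptotic dimension can contain infinite families of finite subgroups of unbounded order, and a bare nerve typically carries a non-proper action if the isotropy data of these subgroups is not built in. To remedy this one has to enrich the cover construction so that (i) the resulting action remains proper with only finite stabilisers, and (ii) the multiplicity bound $\asdim(G)+1$ is preserved. A natural route is to combine the equivariant covers developed in the Farrell--Jones programme (Bartels--L\"uck--Reich) with the coarse geometry encoded by $\asdim(G)$, and to show that the resulting proper $G$-CW complex has dimension controlled by $\asdim(G)$ together with the ``width'' of the lattice of finite subgroups. Executing this step, in the generality stated in Conjecture~A, is precisely where we expect the essential difficulty to lie; the verifications in Theorem~A for concrete families of groups support the expectation that such an equivariant refinement is attainable whenever $\asdim(G)$ is finite.
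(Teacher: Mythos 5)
The statement you are attempting to prove is, in the paper, explicitly labeled as Conjecture~A; the paper offers no proof of it in this generality. What the paper actually does is verify the conjecture case by case for concrete families of groups (Theorem~B), using structure theory specific to each family ($\CAT(0)$ geometry, Seifert decompositions, the solvable and nilpotent radicals of Lie groups, Hirsch length, etc.), rather than attempting a uniform argument from $\asdim(G)<\infty$.

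Your first paragraph is correct and is exactly the reduction the paper itself records: if $\EGunder$ has a finite-dimensional model, its cellular chain complex consists of sums of permutation modules $\IZ[G/H_\alpha]$ with $H_\alpha$ finite, and after tensoring with $\IQ$ Maschke's theorem makes each of these $\IQ G$-projective, giving $\cd_\IQ(G)\le\gdunder(G)$ and hence $\hd_\IQ(G)\le\gdunder(G)$. This is the content of the paper's Lemma citing Bouc (``[Theorem~2]{bln}''). But the paper also points out, immediately after stating Conjecture~A, that the implication you rely on --- $\asdim(G)<\infty\Rightarrow\gdunder(G)<\infty$ --- is a \emph{strictly stronger} version of the conjecture, not known to be equivalent to the $\hd_\IQ$ statement.

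Your second and third paragraphs do not close this gap, and you say so yourself. The anti-\v{C}ech / nerve picture gives a coarse complex of dimension $\le\asdim(G)$, but there is no known general mechanism for promoting it to a cocompact proper $G$-CW complex with only finite stabilizers while preserving the dimension bound; this is precisely why the statement is a conjecture. The only general partial result in this direction that the paper records is Dranishnikov's: if $G$ is finitely presented and $BG$ is finitely dominated (so in particular $G$ is torsion-free and of type FP), then $\cd(G)\le\asdim(G)$. Outside that restrictive setting, one has to fall back on the class-by-class verifications of Theorem~B. So your proposal is an honest heuristic sketch of a plausible research direction, not a proof, and it is not the route the paper takes.
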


\begin{conjB*}
Let $G$ be a discrete group of finite asymptotic dimension. Denote by $C_g < G$ the centralizer of an infinite order element $g \in G$.

Then the quotient $C_g / \langle g \rangle$ also has finite asymptotic dimension.
\end{conjB*}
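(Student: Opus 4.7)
The strategy exploits the fact that $\langle g \rangle$ is central in $C_g$ (by definition every element of $C_g$ commutes with $g$), so one has a central extension
\[
1 \longrightarrow \langle g \rangle \longrightarrow C_g \longrightarrow C_g/\langle g \rangle \longrightarrow 1
\]
with kernel $\IZ$. Because asymptotic dimension is monotone under passage to subgroups of finitely generated groups, $\asdim(C_g) \leq \asdim(G) < \infty$, so the real problem is to bound the asymptotic dimension of the quotient of a finitely generated group by a central $\IZ$-subgroup. This does \emph{not} follow from the standard Hurewicz-type theorem of Bell--Dranishnikov, which produces an inequality in the wrong direction, $\asdim(C_g) \leq \asdim(C_g/\langle g \rangle) + 1$; what one really needs is a reverse Hurewicz statement, which in general is false.

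The cleanest route I would pursue is to produce a coarse splitting of the central extension, i.e.\ a coarse equivalence $C_g \sim \IZ \times C_g/\langle g \rangle$. Combined with the product formula and monotonicity of $\asdim$, this would immediately yield $\asdim(C_g/\langle g\rangle) \leq \asdim(C_g) < \infty$. A coarse splitting exists when two conditions hold: first, $\langle g \rangle$ is undistorted in $C_g$, so that $g$ is a translation-like element; second, the extension class in $H^2(C_g/\langle g\rangle;\IZ)$ is representable by a bounded cocycle. In the presence of a geometric action of $C_g$ on a $\CAT(0)$ space, a systolic complex, a Teichmüller-type space, or a symmetric/solvable homogeneous space of a Lie group, both conditions can be arranged geometrically: the Flat Torus Theorem, for instance, supplies an isometric axis for $g$ along which $\langle g\rangle$ is automatically undistorted, and boundedness of the cocycle can then be read off from a cocompact action on a contractible space of bounded geometry.

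A more flexible alternative is to build directly a proper cocompact action of $C_g/\langle g\rangle$ on a metric space of finite asymptotic dimension. Starting from a classifying space or Rips-type model $X$ for $C_g$ with $\asdim(X)<\infty$, one forms $X/\langle g \rangle$; provided $g$ acts as a translation along an invariant axis-like subset, the quotient inherits finite $\asdim$ from $X$, and the residual $C_g/\langle g \rangle$-action is again proper and cocompact, yielding the desired bound via the Milnor--Schwarz lemma.

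The decisive obstacle is controlling the distortion of $\langle g \rangle$ inside $C_g$. In an abstract finitely generated group of finite asymptotic dimension there is presently no a~priori reason why central cyclic subgroups should be undistorted, and once distortion is allowed both strategies above collapse: the putative coarse splitting fails, and the axis-and-quotient construction has no obvious substitute. I therefore expect the conjecture to be approached class by class in the groups of Theorem~A, invoking $\CAT(0)$, systolic, mapping class, Lie-theoretic, or $3$-manifold geometry that is rich enough to guarantee undistortion and to produce an explicit splitting or geometric model for $C_g/\langle g\rangle$. A fully general proof would seem to require a structural theorem asserting that every infinite-order element in a group of finite asymptotic dimension is a coarse translation in its centralizer, which is beyond what is presently known.
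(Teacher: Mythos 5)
This is Conjecture~B of the paper, which the authors do \emph{not} prove in general: it is an open conjecture, and the paper only verifies it for the specific classes of groups listed in Theorem~B. Your analysis correctly recognises this, and your diagnosis of the obstruction is accurate: the Bell--Dranishnikov extension theorem (Lemma~3.17 in the paper) controls $\asdim$ of an extension from $\asdim$ of kernel and quotient, but gives nothing in the reverse direction, and there is no known way to bound $\asdim(C_g/\langle g\rangle)$ from $\asdim(C_g)$ without additional geometric input. Your proposed remedy --- exhibit $\langle g\rangle$ as undistorted (a ``coarse translation'') and then split $C_g$, up to finite index or coarse equivalence, as $\IZ \times C_g/\langle g\rangle$ --- is precisely the mechanism the paper uses in its class-by-class verification. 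For $\CAT(0)$ groups (Section~4.1.1) the authors apply the Flat Torus Theorem to produce a finite-index subgroup $H < C_g$ with an honest direct-sum decomposition $H \cong \langle g\rangle \oplus H/\langle g\rangle$, so $C_g/\langle g\rangle$ virtually embeds in $C_g$; for systolic groups they quote a commensurability of $C_g$ with $F_n \times \IZ$; for relatively hyperbolic groups they reduce to the virtually cyclic or peripheral cases; for mapping class groups they use cutting homomorphisms along the canonical reduction system; and for Lie groups they descend through the solv-radical, nil-radical and centre, invoking flat-torus-type splittings in the linear semisimple case. So your instinct is exactly right, and your closing remark --- that a fully general proof would require a structural theorem asserting undistortion of infinite-order central elements in finite-$\asdim$ groups, which is currently out of reach --- is a fair assessment of why the statement remains a conjecture rather than a theorem. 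One small correction: subgroup monotonicity of $\asdim$ does not require $G$ to be finitely generated; it holds for any countable group equipped with a proper left-invariant metric.
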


The second main result of this paper is to verify the above two conjectures for almost all classes of groups which are known, resp., conjectured to have finite asymptotic dimension:

\begin{thmB*}
The above two conjectures are true for
\begin{itemize}
\item all classes of groups stated in Theorem A together with the closure property for relatively hyperbolic groups,
\item one-relator groups, and
\item elementary amenable groups of finite Hirsch length.
\end{itemize}
\end{thmB*}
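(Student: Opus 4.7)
The plan is to verify Conjectures A and B separately and class-by-class, drawing on the specific geometric or algebraic structure of each family. The methodology is uniform in spirit: for Conjecture A we either invoke a known finiteness theorem for the rational (co)homological dimension or build a finite-dimensional $\IQ$-acyclic model for the classifying space; for Conjecture B we analyse the centralizer $C_g$ of an infinite-order element $g$ via the geometric object on which $G$ acts, exhibit a natural quotient or transverse complement, and bound $\asdim(C_g/\langle g\rangle)$ by the asymptotic dimension of that complement or of a smaller model group.

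For Conjecture A, the $\CAT(0)$ case is immediate by assumption; systolic groups act properly cocompactly on a finite-dimensional contractible systolic complex, which bounds $\hd_\IQ$; mapping class groups have finite virtual cohomological dimension by Harer; discrete subgroups of almost connected Lie groups are handled through the Borel--Serre compactification together with the structure theory of Lie groups; $3$-manifold groups admit finite classifying spaces after passage to a finite-index subgroup via geometrization and the JSJ decomposition; one-relator groups have $\cd\le 2$ after killing torsion (Lyndon); and for elementary amenable groups of finite Hirsch length $h$ one has $\hd_\IQ(G)\le h$ by results of Flanders and Hillman--Linnell. Closure under relative hyperbolicity follows from a standard Mayer--Vietoris / Bestvina--Mess argument, bounding $\hd_\IQ$ of the supergroup in terms of the peripheral subgroups.

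For Conjecture B the argument depends more sharply on the geometry at hand. For $\CAT(0)$ groups one applies the Flat Torus Theorem: the minimal set of $\langle g\rangle$ splits isometrically as $Y\times\IR$, the centralizer $C_g$ preserves this splitting, and the induced action of $C_g/\langle g\rangle$ on $Y$ inherits a bound on asymptotic dimension from $\asdim X$. Systolic groups admit an analogous parallelism theory thanks to work of Elsner and Osajda. For mapping class groups one uses the canonical reduction system of $g$ together with the Birman--Lubotzky--McCarthy description of centralizers, combined with Bestvina--Bromberg--Fujiwara-type bounds on the asymptotic dimension of mapping class groups of smaller complexity. For discrete subgroups of almost connected Lie groups, $C_g$ is again of the same form, living on a smaller symmetric space, and the quotient construction descends. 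For $3$-manifold groups the JSJ decomposition reduces the problem to Seifert and hyperbolic pieces, in which centralizers are explicitly describable. For relatively hyperbolic supergroups the dichotomy is between $g$ being hyperbolic (so $C_g$ is virtually cyclic and the quotient finite) and $g$ being parabolic (so the problem reduces inductively to a peripheral subgroup). For one-relator groups classical results force centralizers of infinite-order elements to be cyclic, making the quotient finite. For elementary amenable groups of finite Hirsch length one proceeds by induction on Hirsch length using Hillman's structure theory, exploiting that in this class asymptotic dimension is dominated by Hirsch length.

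The principal obstacle will be Conjecture B for $\CAT(0)$ groups and for mapping class groups, where one must produce an honest bound on $\asdim(C_g/\langle g\rangle)$ rather than merely on a generating set. For $\CAT(0)$ groups the delicate point is that the parallel set $Y\times\IR$ need not be $C_g$-cocompact in general, so one has to control the coarse geometry of the induced action of $C_g/\langle g\rangle$ on $Y$ directly, passing through a carefully chosen orbit or subcomplex to transfer the asymptotic-dimension bound. For mapping class groups the main task is to assemble the pieces dictated by the canonical reduction system into a single coarse space on which $C_g/\langle g\rangle$ acts, invoking the recent asymptotic-dimension estimates for mapping class groups of bounded complexity and checking that the splittings behave well under quotienting by $\langle g\rangle$. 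The remaining cases are variations on established themes and should follow with more bookkeeping than novelty.
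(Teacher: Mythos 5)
Your high-level plan matches the paper in spirit: both conjectures are checked class by class, and for Conjecture B the centralizer is analysed through the geometry on which the group acts. But there are several places where your proposed argument either errs or overstates what is known.

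The most concrete error is in the one-relator case. You assert that classical results force centralizers of infinite-order elements in one-relator groups to be cyclic, so that $C_g/\langle g\rangle$ is finite. That is true only for one-relator groups \emph{with torsion} (Newman). For torsion-free one-relator groups the result of Karrass--Pietrowski--Solitar is that every finitely generated subgroup of $C_g$ has the form $F_n\times\IZ$, and $n$ can certainly be positive (already $\IZ^2$ is one-relator and is its own centralizer). The reduced centralizer can therefore be an infinite locally-free group, not a finite group. What actually saves the argument is not that $C_g/\langle g\rangle$ is finite, but that it still has $\hd$ and $\asdim$ bounded by $1$, coming from the $F_n\times\IZ$ structure of the finitely generated pieces. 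As written your argument for this case would be wrong.

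You also claim that closure of Conjecture A under passage to relatively hyperbolic supergroups follows ``from a standard Mayer--Vietoris / Bestvina--Mess argument, bounding $\hd_\IQ$ of the supergroup in terms of the peripheral subgroups.'' The paper does \emph{not} prove this and in fact explicitly remarks that it is unknown whether $\hd_\IQ(G)<\infty$ follows from finiteness of $\hd_\IQ$ of the peripheral subgroups. The obstruction is that finiteness of $\hd_\IQ$ of the peripherals does not give a finite-dimensional model for $\underline{EH_i}$, which is what Dahmani's construction would require. The closure that is actually established in the paper for relatively hyperbolic groups concerns Conjecture B (via Osin: hyperbolic elements have virtually cyclic centralizers, parabolic centralizers are conjugate to centralizers in peripherals), together with Osin's theorem that finite asymptotic dimension is inherited from the peripherals. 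Your Mayer--Vietoris argument would need a genuinely new idea to close the gap, and you should not present it as routine.

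Finally, for $\CAT(0)$ groups your concern about the parallel set $Y\times\IR$ being non-cocompact is a real one, but the paper sidesteps it in a cleaner way that you should adopt: using semi-hyperbolicity of $\CAT(0)$ groups, centralizers are shown to be quasi-convex and quasi-isometrically embedded, so $g$ acts as a hyperbolic isometry of $X$; the product splitting theorem then produces a finite-index subgroup $H<C_g$ with $H\cong\langle g\rangle\oplus H/\langle g\rangle$. Consequently $C_g/\langle g\rangle$ virtually embeds as a \emph{subgroup} of $C_g$, and both $\hd_\IQ$ and $\asdim$ are bounded directly by subgroup-monotonicity without any analysis of the induced action on $Y$. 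For systolic groups the paper relies on the explicit result of Osajda--Prytu{\l}a that $C_g$ is commensurable to $F_n\times\IZ$, rather than an abstract ``parallelism theory.'' And for discrete subgroups of almost connected Lie groups the relevant model is the symmetric space $K\backslash G$ for $\underline{E\Gamma}$, not the Borel--Serre compactification, which applies only to arithmetic groups; the centralizer analysis then runs through the solv-radical, nil-radical, and center rather than a compactification. Your other cases (mapping class groups, elementary amenable of finite Hirsch length, $3$-manifold groups) are on the right track in outline.
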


Note that one-relator groups and elementary amenable groups of finite Hirsch length were already known to satisfy the Burghelea conjecture, which is the reason why we did not mention them in the list in Theorem A.

If one believes that the Burghelea conjecture should be true for all groups of finite asymptotic dimension, then the next question is whether one can generalize this to a larger class of groups. A natural generalization of finite asymptotic dimension is the notion of finite decomposition complexity introduced by Guentner--Tessera--Yu \cite{fdc_1}. In Section~\ref{sec:fdc} we will construct a counter-example to the Burghelea conjecture having finite decomposition complexity.


\section{Algebraic Baum--Connes assembly map}\label{secdsj239233}

Let $\calS = \bigcup_p \calS^p (H)$ be the Schatten class operators on some separably $\infty$-dimensional Hilbert space $H$. The $K$-theoretic Farrell--Jones assembly map for the ring $\calS$ is a map
\begin{equation*}
\label{sjkdf23r}
\mu_\ast^{\mathit{FJ}} \colon H_\ast^{\mathit{Or}G}(E_{\mathit{vcyc}} G; \mathbf{K}^{-\infty}(\calS)) \to K^{\alg}_\ast(\calS G).
\end{equation*}
Yu \cite[Section 2]{yu_algebraic_novikov} and Corti\~{n}as--Tartaglia \cite[Proof of Corollary 4.2]{ctartaglia} showed (building upon work of Corti\~{n}as--Thom \cite{CT}) that the domain $H_\ast^{\mathit{Or}G}(E_{\mathit{vcyc}} G; \mathbf{K}^{-\infty}(\calS))$ of $\mu_\ast^{\mathit{FJ}}$ identifies with the left hand side $RK^G_\ast(\underline{EG})$ of the Baum--Connes conjecture, and that $\mu_\ast^{\mathit{FJ}}$ factors the Baum--Connes assembly map $\mu_\ast^{\mathit{BC}} \colon RK^G_\ast(\underline{EG}) \to K_\ast^\toprm(C^\ast_r G)$, i.e., the following diagram commutes:
\[\xymatrix{
H_\ast^{\mathit{Or}G}(E_{\mathit{vcyc}} G; \mathbf{K}^{-\infty}(\calS)) \ar[r]^-{\mu_\ast^{\mathit{FJ}}} & K^{\alg}_\ast(\calS G) \ar[d]\\
RK^G_\ast(\underline{EG}) \ar[r]^-{\mu_\ast^{\mathit{BC}}} \ar[u]_{\cong} & K^{\toprm}_\ast(C^\ast_r G)}\]
The right vertical map is given by passing to the completion $C^\ast_r G \otimes \IK$ of $\calS G \cong \IC G \otimes \calS$.

\begin{defn}
We call
\[\mu^\alg_\ast \colon RK^G_\ast(\EG) \to K^{\alg}_\ast(\calS G)\]
the algebraic Baum--Connes assembly map.
\qed
\end{defn}

\begin{conj}[Algebraic Baum--Connes conjecture]
The algebraic assembly map $\mu^\alg_\ast$ is an isomorphism for every discrete group~$G$.
\end{conj}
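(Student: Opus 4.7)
The plan is first to note that $\mu^\alg_\ast$ is, by its very construction, nothing other than the $K$-theoretic Farrell--Jones assembly map with coefficients in $\mathbf{K}^{-\infty}(\calS)$, once the source is identified with $RK^G_\ast(\EG)$ via the Corti\~{n}as--Thom, Yu, and Corti\~{n}as--Tartaglia results recalled above. Under this identification the algebraic Baum--Connes conjecture for $G$ is equivalent to the $K$-theoretic Farrell--Jones isomorphism conjecture for the ring $\calS$ and the group $G$. The whole task therefore reduces to an instance of the Farrell--Jones program, and one can attempt to import its entire toolbox.

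Concretely, I would proceed as follows. First, verify carefully that $\calS$ fits into the additive-category version of Farrell--Jones (Bartels--Reich), so that the known isomorphism results and their inheritance properties apply with these coefficients; this step is essentially bookkeeping built on the cited literature. Second, for each class of groups for which Farrell--Jones with general (additive) coefficients is known---hyperbolic, $\CAT(0)$, solvable, mapping class, lattices in almost connected Lie groups, and so on---deduce the algebraic Baum--Connes conjecture immediately. Third, combine this with the formal closure properties of the Farrell--Jones family (subgroups, finite products, directed colimits, extensions, relatively hyperbolic supergroups) to enlarge the class of groups for which $\mu^\alg_\ast$ is known to be an isomorphism.

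The main obstacle, however, is that the conjecture is asserted for \emph{every} discrete group, and on that level of generality the underlying Farrell--Jones conjecture itself is open. Beyond the standard families no general technique is known, and a genuine proof of the full statement would require fundamentally new geometric input rather than additional bookkeeping with $\calS$-coefficients. A secondary technical subtlety is that $\calS$ is a non-unital locally convex algebra rather than a discrete ring, so one has to make sure that the controlled-algebra, excision and transfer arguments that underlie the Farrell--Jones machinery remain valid with such analytic coefficients and that the descent/Mayer--Vietoris principles for $K^\alg(\calS \otimes -)$ continue to hold as needed; this is expected to be routine but must be checked carefully before any of the known isomorphism theorems can be cited.
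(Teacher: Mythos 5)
This statement is labeled as a \emph{conjecture} in the paper, not a theorem, and the paper offers no proof of it; there is therefore nothing to compare your attempt against. Your analysis is in fact the correct assessment of the situation: by the very definition given in Section~2 (and the cited results of Yu and Corti\~{n}as--Tartaglia identifying $H_\ast^{\mathit{Or}G}(E_{\mathit{vcyc}} G; \mathbf{K}^{-\infty}(\calS))$ with $RK^G_\ast(\EG)$), the algebraic Baum--Connes conjecture for $G$ is the $K$-theoretic Farrell--Jones isomorphism conjecture with coefficients in the ring $\calS$, which is open for general discrete groups. No proof is expected here, and you have correctly identified both the reduction and its current limits. What the paper does establish is strictly weaker: Corollary~\ref{sdf23fwd} (due to Yu) shows that $\mu^\alg_\ast$ is \emph{rationally injective} for every group, via the Chern character diagram~\eqref{eq:dsf8934}, and the remainder of the paper concerns consequences (the Burghelea and Bass conjectures) rather than the isomorphism statement itself. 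If you wanted your discussion to align more closely with what the paper actually proves, the relevant target would be the rational injectivity statement or the Burghelea conjecture for specific classes of groups, not the full algebraic Baum--Connes conjecture.
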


We will see in Corollary~\ref{sdf23fwd} that the algebraic assembly map $\mu^\alg_\ast$ is rationally injective, which was proved by Yu \cite{yu_algebraic_novikov}. Twenty years earlier Tillmann \cite{tillmann} showed a similar result, namely rational injectivity of a map $RK_\ast(BG) \to K_\ast^{\toprm}(\IC G)$ that she constructed and which also factors the Baum--Connes assembly map.

\begin{rem}
The algebraic assembly map has an index theoretic interpretation.

Using the geometric picture for $K$-homology, we see that any element of $RK_\ast^G(\underline{EG})$ is represented by a finitely summable cycle, where the order of summability depends on the dimension of the cycle. So the analytic construction of the assembly map produces an element in $K_\ast^\alg(\calS G)$.

We do not know whether Tillmann's map $RK_\ast(BG) \to K_\ast^{\toprm}(\IC G)$ has a similar index theoretic interpretation.
\qed
\end{rem}

Let us now rationalize the algebraic assembly map, i.e., we will apply / construct Chern characters for the left and right hand sides of the algebraic assembly map to turn the algebraic Baum--Connes conjecture into a rational one, i.e., one where we do not have to worry about torsion elements. This rationalization will also provide us with the relation of the algebraic Baum--Connes conjecture to the Burghelea conjecture.

\begin{nota}
Let us write for homology groups
\[H_{[n]}(X) := \bigoplus_{i \ge 0} H_{n-2i}(X) \text{ and }H_{[\ast]}(X) := \prod_{i \in \IZ} H_{\ast+2i}(X).\]
For a group $G$ we will write $\langle G \rangle$ for its conjugacy classes, $\langle G \rangle^{\fin}$ for its conjugacy classes of finite order elements and $\langle G \rangle^{\infty}$ for its conjugacy classes of infinite order elements.
\qed
\end{nota}

\begin{thm}[{\cite[Theorem 7.3]{baum_connes_higson}}]
We have a functorial Chern character
\[\ch^G_\ast\colon RK_\ast^G(X) \to H^G_{[\ast]}(X;\IC)\]
which becomes an isomorphism after tensoring the domain with $\IC$.

Furthermore, we have for all $n \in \IN_0$
\begin{equation*}
\label{dfui39sdf}
H_n^G(\EG;\IC) \cong \bigoplus_{[g] \in \langle G \rangle^{\fin}} H_n(C_g;\IC),
\end{equation*}
where $C_g < G$ is the centralizer of the element $g$ in $G$.
\end{thm}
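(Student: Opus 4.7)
The plan is to address the two assertions separately. For the Chern character, I would follow the standard Atiyah--Segal style strategy adapted to the equivariant setting of proper actions. First, one builds the target theory $H^G_{[\ast]}(-;\IC)$ as an equivariant homology theory (i.e.\ satisfying the $G$-equivariant Eilenberg--Steenrod axioms for proper $G$-CW-complexes), whose value on an orbit $G/H$ with $H$ finite can be computed via the Borel/inertia construction. Then one constructs a natural transformation $\ch^G_\ast$ of equivariant homology theories from $RK^G_\ast$ to $H^G_{[\ast]}(-;\IC)$ by specifying it on the generators, namely on orbits $G/H$ with $H$ finite: here the map is the classical character isomorphism $R(H) \otimes \IC \xrightarrow{\cong} \mathrm{Class}(H;\IC) = \bigoplus_{[h] \in \langle H \rangle} \IC$. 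Naturality and the additivity/Mayer--Vietoris axioms will propagate the construction to general proper $G$-CW-complexes.

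Next I would verify that, after tensoring the source with $\IC$, the transformation $\ch^G_\ast$ is an isomorphism. By an induction on cells and a Mayer--Vietoris comparison argument (a standard ``five lemma up the $G$-CW filtration''), this reduces to the case $X = G/H$ with $H$ finite. In that case both sides collapse to finite-dimensional complex vector spaces indexed by the conjugacy classes of $H$, and the map is the character isomorphism, which is classical. Thus the first half of the statement follows once the equivariant homology theory $H^G_{[\ast]}(-;\IC)$ has been set up and its values on orbits identified.

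For the second part, the computation of $H_n^G(\EG;\IC)$, I would use the inertia-space / delocalization picture built into the very definition of $H^G_\ast(-;\IC)$. Concretely, for a proper $G$-CW-complex $X$ one has a natural identification
\[
H^G_n(X;\IC) \cong \bigoplus_{[g] \in \langle G \rangle^{\fin}} H_n\big(C_g \backslash X^g;\IC\big),
\]
obtained from the decomposition of the inertia groupoid of $X$ into components indexed by conjugacy classes of finite order elements. Specializing to $X = \EG$, for each finite order $g$ the fixed-point set $(\EG)^g$ is a model for $\underline{E C_g}$ and hence is contractible; together with the fact that $C_g$ acts properly on $(\EG)^g$, this yields $C_g \backslash (\EG)^g \simeq B C_g$ (at least after passing to the Borel construction, which is rationally harmless for the torsion in $C_g$). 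Taking rational homology gives the claimed identification.

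The main obstacle I foresee is setting up $H^G_\ast(-;\IC)$ and its axioms in a way that is simultaneously (i) recognizably the target of a Chern character for proper-action equivariant $K$-homology and (ii) manifestly isomorphic to the delocalized inertia formula above; these are essentially Baum--Connes--Higson's Theorem~7.3 in \cite{baum_connes_higson}, where the bookkeeping of the even/odd grading shift encoded by $[\ast] = \bigoplus_{i\ge 0} (\ast+2i)$ and the correct treatment of infinite conjugacy classes of finite order elements requires care, even though the rest is formal equivariant homological algebra.
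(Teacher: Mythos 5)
The paper does not prove this theorem: it cites it verbatim to \cite[Theorem 7.3]{baum_connes_higson}, so there is no in-paper argument to compare against. Your outline is nonetheless a faithful reconstruction of the Baum--Connes--Higson argument: their $H^G_*(-;\IC)$ is defined via a delocalized chain complex indexed by conjugacy classes of finite-order elements, the Chern character is pinned down on orbits $G/H$ with $H$ finite by the character isomorphism $R(H)\otimes\IC\cong\bigoplus_{[h]\in\langle H\rangle}\IC$, and the rational isomorphism is propagated up the proper $G$-CW filtration by Mayer--Vietoris together with a colimit step (needed since $\EG$ may fail to be $G$-compact and one is working with representable $K$-homology $RK^G_*$). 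Your treatment of the second assertion is also sound: $(\EG)^g$ is a model for $\underline{E C_g}$ because $\langle g,K\rangle$ is finite for every finite subgroup $K<C_g$, and the comparison map from the Borel construction $E C_g\times_{C_g}(\EG)^g$ to the orbit space $C_g\backslash(\EG)^g$ is a rational homology equivalence since all isotropy groups are finite, yielding $H_n(C_g\backslash(\EG)^g;\IC)\cong H_n(BC_g;\IC)=H_n(C_g;\IC)$. So the proposal is correct and matches the cited source in spirit; the paper itself simply outsources this proof.
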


\begin{nota}
Let us write
\begin{align}
\label{eq:sdf23fds}
\PHC_\ast(\IC G) & := \varprojlim \big( \cdots \xrightarrow{S} \HC_{\ast+2n}(\IC G) \xrightarrow{S} \HC_{\ast+2n-2}(\IC G) \xrightarrow{S} \cdots \big)\notag\\
T^G_\ast(g;\IC) & := \varprojlim \big( \cdots \xrightarrow{S} H_{\ast+2n}(C_g/\langle g\rangle;\IC) \xrightarrow{S} H_{\ast+2n-2}(C_g/\langle g\rangle;\IC) \xrightarrow{S} \cdots \big)
\end{align}
where $S$ in the definition of $\PHC_\ast(-)$ is the periodicity operator of cyclic homology and the $S$ in \eqref{eq:sdf23fds} is the Gysin homomorphism of the fibration $B\langle g \rangle = S^1 \to B C_g \to B C_g/\langle g\rangle$.
\qed
\end{nota}

\begin{thm}[{Burghelea \cite{burghelea}; see also \cite[Eq.~(3.36) on p.~152]{khalkhali_basic}}]
We have the following isomorphisms for all $n \in \IN_0$ and $\ast = 0,1$:
\begin{alignat}{3}
& \HH_n(\IC G) && \cong \bigoplus_{[g] \in \langle G \rangle^{\fin}} H_n(C_g;\IC) && \oplus \bigoplus_{[g] \in \langle G \rangle^{\infty}} H_n(C_g;\IC)\label{eq:sdfjsdfjk43}\\
& \HC_n(\IC G) && \cong \bigoplus_{[g] \in \langle G \rangle^{\fin}} H_{[n]}(C_g;\IC) && \oplus \bigoplus_{[g] \in \langle G \rangle^{\infty}} H_n(C_g/\langle g\rangle;\IC)\notag\\
& \PHC_\ast(\IC G) && \cong \bigoplus_{[g] \in \langle G \rangle^{\fin}} H_{[\ast]}(C_g;\IC) && \oplus \bigoplus_{[g] \in \langle G \rangle^{\infty}} T^G_\ast(g;\IC)\label{eq:sdfjnk43}
\end{alignat}
\end{thm}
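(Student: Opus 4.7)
The plan is to start from the Hochschild chain complex $C_n(\IC G) = (\IC G)^{\otimes (n+1)}$ and exploit the fundamental observation that the Hochschild differential preserves the conjugacy class of the product $g_0 g_1 \cdots g_n$. This yields a canonical splitting of the Hochschild complex indexed by $\langle G \rangle$, which descends to the cyclic bicomplex $CC_{**}(\IC G)$ and to its periodicization because the cyclic operator $t$ as well as Connes' $B$ preserve the same decomposition. So the first step is to establish this conjugacy-class decomposition of the full $(b,B)$-bicomplex and to verify its compatibility with $S$.

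Next, I would identify each summand up to quasi-isomorphism. Fixing a representative $g \in [g]$, the $[g]$-subcomplex of $C_*(\IC G)$ can be rewritten in terms of tuples with prescribed product, and by a standard change-of-variables it is isomorphic as a chain complex to the bar resolution of $\IC$ computed inside the centralizer $C_g$. Hence it is quasi-isomorphic to $C_*(BC_g;\IC)$, which already establishes \eqref{eq:sdfjsdfjk43}: both finite and infinite order classes contribute the group homology of the centralizer, and there is no distinction between the two cases at the Hochschild level.

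The essential step is the cyclic computation, and this is where the finite/infinite dichotomy appears. Under the identification above, the cyclic operator corresponds on each summand to the action of $g$ on $BC_g$ by left translation. If $g$ has finite order, this action is homotopic to the identity (via the usual argument using that $g$ commutes with $C_g$ and that $g$ is torsion in the ambient group), so the Connes double complex collapses to $\bigoplus_i H_{n-2i}(C_g;\IC)$, i.e. to $H_{[n]}(C_g;\IC)$. If $g$ has infinite order, then $\langle g \rangle \cong \IZ$ acts freely on $EC_g$ and the quotient map $BC_g \to BC_g/\langle g\rangle$ is a principal $S^1 = B\langle g \rangle$-bundle; a mixed complex computation identifies the $[g]$-summand of $\HC_n(\IC G)$ with $H_n(C_g/\langle g\rangle;\IC)$, and matches Connes' $S$ with the Gysin map of this $S^1$-fibration. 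Carrying out this matching of $S$ with the Gysin homomorphism is the main technical obstacle, because one must trace through the cyclic normalization and check that the geometric $S^1$-action agrees with the algebraic cyclic structure. This is essentially the content of Burghelea's original computation.

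Finally, \eqref{eq:sdfjnk43} follows by applying $\varprojlim$ under $S$ to the cyclic decomposition. For a finite order class the tower is eventually stable in each degree (each group $H_{n-2i}(C_g;\IC)$ vanishes for $i$ large once $n-2i<0$, and the Mittag-Leffler condition is trivial), so the limit equals $H_{[*]}(C_g;\IC)$. For an infinite order class, the identification of $S$ with the Gysin operator from the previous step recognises the inverse system as the one defining $T^G_\ast(g;\IC)$, yielding the stated decomposition of $\PHC_\ast(\IC G)$.
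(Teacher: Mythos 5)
The paper does not prove this theorem; it is stated with the attribution ``Burghelea \cite{burghelea}; see also \cite[Eq.~(3.36) on p.~152]{khalkhali_basic}'' and used as a black box. So there is no ``paper's own proof'' to compare against. That said, your sketch is a faithful outline of Burghelea's original argument and is essentially correct: the conjugacy-class decomposition of the Hochschild (and then cyclic) complex, the identification of the $[g]$-summand with the bar complex of $C_g$ via Shapiro's lemma, the dichotomy at the cyclic level driven by whether the central element $g$ generates a finite or infinite cyclic subgroup, and the identification of Connes' $S$ with the Gysin homomorphism of $S^1 \to BC_g \to BC_g/\langle g\rangle$ for $g$ of infinite order. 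Your final step (passing to the inverse limit, with the finite-order tower stabilizing because $H_{n-2i}$ vanishes for $i$ large and the infinite-order tower reproducing $T^G_\ast(g;\IC)$ by definition) is also correct.

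One small point worth tightening: in the finite-order case you say the translation action of $g$ on $BC_g$ is ``homotopic to the identity via the usual argument using that $g$ commutes with $C_g$ and that $g$ is torsion.'' In fact centrality of $g$ in $C_g$ alone makes left translation by $g$ homotopic to the identity on $BC_g$; the finite-order hypothesis enters elsewhere, namely in guaranteeing that the induced circle action on the realization of the cyclic set is \emph{rationally} trivializable (equivalently, that $\langle g\rangle$ contributes no positive-dimensional rational homology), which is what collapses the $(b,B)$-bicomplex to $\bigoplus_i H_{n-2i}(C_g;\IC)$. For $g$ of infinite order, $\langle g\rangle \cong \IZ$ contributes nontrivially and this is precisely what produces the $S^1$-fibration and the Gysin operator. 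Your instinct that the $S$-versus-Gysin identification is the genuine technical content of the theorem is correct.
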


>From results of Corti\~{n}as--Thom \cite[Theorems 6.5.3 \& 8.2.5]{CT}) we conclude \[K_\ast^\alg(\calS G) \cong \KH_\ast(\calS G)\]
and these groups are $2$-periodic (with periodicity induced by multiplication with the Bott element). By definition $\calS G$ is the filtered colimit
\[\calS G = \operatorname{colim} \big(\cdots \hookrightarrow \calS^p G \hookrightarrow \calS^{p+1} G \hookrightarrow \cdots \big)\]
and it is known that $\KH$-theory commutes with filtered colimits. We know from the proof of \cite[Corollary 3.5]{ctartaglia} that the induced maps $\KH_\ast(\calS^p G) \to \KH_\ast(\calS^{p+1} G)$ are isomorphisms for all $p \ge 1$ and so we can conclude
\[\KH_\ast(\calS G) \cong \KH_\ast(\calS^1 G).\]
We use the Connes--Karoubi character $\KH_\ast(\calS^1 G) \to \HP^{\mathrm{cont}}_\ast(\calS^1 G)$ and write
\[\chSG_\ast\colon K_\ast^\alg(\calS G) \to \HP^{\mathrm{cont}}_\ast(\calS^1 G)\]
for the composition of all the above maps. Then we apply $\trace\colon \HP^{\mathrm{cont}}_\ast(\calS^1 G) \to \HP_\ast(\IC G)$ induced by the trace, and lastly we use the projection $\HP_\ast(\IC G) \to \PHC_\ast(\IC G)$ whose application we will suppress in our notation.

So putting everything together we get the following diagram:
\begin{equation}
\label{eq:dsf8934}
\xymatrix{RK^G_\ast(\EG) \otimes \IC\ar[d]_{\ch_\ast^G}^\cong \ar[r]^-{\mu^\alg_\ast \otimes \id_{\IC}} & K_\ast^\alg(\calS G) \otimes \IC \ar[d]_{\trace \circ \chSG_\ast}\\
H_{[\ast]}^G(\EG;\IC) \ar[d]^\cong & \PHC_\ast(\IC G) \ar[d]^\cong\\
\bigoplus_{[g] \in \langle G \rangle^{\fin}} H_{[\ast]}(C_g;\IC) \ar@{>->}[r] & \bigoplus_{[g] \in \langle G \rangle^{\fin}} H_{[\ast]}(C_g;\IC) \oplus \bigoplus_{[g] \in \langle G \rangle^{\infty}} T^G_\ast(g;\IC)}
\end{equation}

\begin{cor}[{\cite{yu_algebraic_novikov}}]\label{sdf23fwd}
The algebraic assembly map is rationally injective for every group~$G$.
\end{cor}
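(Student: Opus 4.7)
The plan is to deduce the corollary directly from the commutative diagram \eqref{eq:dsf8934}. By the Chern character theorem of Baum--Connes--Higson cited above, the left vertical map $\ch^G_\ast$ becomes an isomorphism after tensoring with $\IC$. By Burghelea's decomposition \eqref{eq:sdfjnk43}, the bottom horizontal arrow is the inclusion of a direct summand of $\PHC_\ast(\IC G)$, and is therefore split injective. Consequently, the down-then-across composition
\[RK^G_\ast(\EG) \otimes \IC \xrightarrow{\ch^G_\ast \otimes \id_\IC} H^G_{[\ast]}(\EG; \IC) \cong \bigoplus_{[g] \in \langle G \rangle^{\fin}} H_{[\ast]}(C_g; \IC) \hookrightarrow \PHC_\ast(\IC G)\]
is injective. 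Once the commutativity of \eqref{eq:dsf8934} is granted, the equal across-then-down composition $\trace \circ \chSG_\ast \circ (\mu^\alg_\ast \otimes \id_\IC)$ is also injective, forcing $\mu^\alg_\ast \otimes \id_\IC$ itself to be injective, which is exactly rational injectivity of $\mu^\alg_\ast$.

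The one nontrivial ingredient is therefore the commutativity of the diagram \eqref{eq:dsf8934}, which asserts compatibility of the algebraic assembly map with the Chern character. I would approach this in two stages. First, I would establish the analogous statement for the topological Baum--Connes assembly map $\mu^{\mathit{BC}}_\ast \colon RK^G_\ast(\EG) \to K^\toprm_\ast(C^\ast_r G)$, where compatibility with the Chern-Connes character landing in $\PHC_\ast(\IC G)$ on representatives of the finite summand is the classical statement underlying \cite[Theorem 7.3]{baum_connes_higson}. Second, I would transfer this compatibility to the algebraic side via the factorization of $\mu^{\mathit{BC}}_\ast$ through $\mu^\alg_\ast$ recorded before the definition of $\mu^\alg_\ast$, together with naturality of the Connes--Karoubi character: the map $\chSG_\ast$ was constructed precisely so that the composition $\trace \circ \chSG_\ast$ on $K^\alg_\ast(\calS G)$ restricts, after passing to the completion $\calS G \to C^\ast_r G \otimes \IK$, to the topological trace-Chern character on $K^\toprm_\ast(C^\ast_r G)$.

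The main obstacle is thus a cycle-level identification of $\trace \circ \chSG_\ast \circ \mu^\alg_\ast$ with the finite-summand part of $\ch^G_\ast$. A clean way to do this is via the index-theoretic interpretation of $\mu^\alg_\ast$ noted in the Remark: a geometric $K$-homology cycle yields a finitely summable equivariant operator whose analytic index lies in $K^\alg_\ast(\calS G)$, and a Connes--Moscovici-type local index formula computes the Chern-Connes--Karoubi character of this index as the expected equivariant Chern character cocycle on the finite-order conjugacy classes. Since infinite-order conjugacy classes contribute $0$ to the image of $\ch^G_\ast$, the two paths in \eqref{eq:dsf8934} agree on a set of generators, and the corollary follows.
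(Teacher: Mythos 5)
Your first paragraph reproduces the paper's intended argument exactly: rational injectivity of $\mu^\alg_\ast$ is read off commutativity of diagram \eqref{eq:dsf8934}, since the left vertical composite is an isomorphism after tensoring with $\IC$ and the bottom horizontal is the split inclusion of the finite-order summand from Burghelea's decomposition \eqref{eq:sdfjnk43}. The paper states the corollary immediately after the diagram with a citation to Yu, so the formal deduction is implicit there, but you have it right. The remainder of your proposal addresses commutativity of \eqref{eq:dsf8934}, which the paper does \emph{not} prove: it takes this for granted from Yu's work and the Corti\~{n}as--Thom and Corti\~{n}as--Tartaglia constructions used to build the right vertical map (see also Remark~\ref{remjkds2323}, which reorganizes the diagram by inserting the Farrell--Jones assembly map for $\PHC(\IC-)$ and citing Corti\~{n}as--Tartaglia for its injectivity). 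Your two-stage sketch is plausible in spirit, but it is not a proof: the cycle-level Connes--Moscovici-type local index identification you invoke for the finitely summable equivariant setting is a substantial result that is not established here. In a write-up you should cite Yu (or follow Remark~\ref{remjkds2323}) for commutativity rather than attempt to reprove it.
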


\begin{rem}\label{remjkds2323}
During a talk about the results of this paper Wolfgang L\"{u}ck noticed that in the above Diagram \eqref{eq:dsf8934} we may include a horizontal arrow $H_{[\ast]}^G(\EG;\IC) \to \PHC_\ast(\IC G)$, which is the Farrell--Jones assembly map for the theory $\PHC(\IC -)$. This breaks the whole diagram into two separate pieces.

Corti\~{n}as--Tartaglia \cite[Proposition 2.2.5]{ctartaglia} showed that the assembly map for the theory $\HP(\IC -)$ is always injective. Their result also holds for $\PHC(\IC -)$, and this is of course compatible with the above diagram which gives an alternative computation of the assembly map for $\PHC(\IC -)$.
\qed
\end{rem}

\section{Statement and discussion of the conjectures}

\subsection{Burghelea conjecture}

\begin{defn}[Burghelea conjecture {\cite[Section IV]{burghelea}}]
We will say that $G$ satisfies the Burghelea conjecture if the group $\bigoplus_{[g] \in \langle G \rangle^{\infty}} T^G_\ast(g;\IC)$ defined by \eqref{eq:sdf23fds} vanishes.
\qed
\end{defn}

One can state the conjecture for any commutative coefficient ring $R$ with unit. The statement for $\IQ$-coefficients is equivalent to the one for $\IC$-coefficients, and in the literature it is usually stated with $\IQ$-coefficients.

Because of Remark~\ref{remjkds2323} the Burghelea conjecture is the same as the Farrell--Jones conjecture for $\PHC(\IC -)$.

\begin{rem}
The Burghelea conjecture is known to be false without further assumptions on the group. A counter-example was constructed by Burghelea \cite[Section IV]{burghelea}.

So the conjecture is actually about the problem of deciding which classes of groups satisfy it, resp., about constructing more counter-examples. Burghelea himself conjecture that his conjecture should be true for groups admitting a finite classifying space.
\qed
\end{rem}

\begin{rem}\label{jkdsf9023}
We will collect now the known results about the Burghelea conjecture with $\IQ$-coefficients (some of the results hold for more general coefficients than $\IQ$, but we will not mention this in our breakdown).

\begin{enumerate}
\item Burghelea himself noted \cite[Section IV]{burghelea} that his conjecture is true for every fundamental group of a compact, negatively curved Riemannian manifold.

\item Eckmann showed \cite{eckmann} that the Burghelea conjecture is true for every group~$G$ with $\hd_\IQ(G) < \infty$ and belonging to one of the following classes: nilpotent groups, torsion-free solvable groups, linear groups over fields of characteristic $0$, and groups of rational cohomological dimension $\le 2$.\footnote{By a result of Chiswell \cite{chiswell} $1$-relator groups have rational cohomological dimension bounded from above by $2$. Examples of $1$-relator groups are the Baumslag--Solitar groups.} The latter implies that the Burghelea conjecture is true for all groups satisfying \Poincare duality of dimension $2$ over $\IQ$ or of dimension $3$ over $\IZ$ (Eckmann \cite[Theorem~8]{eckmann_2}).

\item Ji showed \cite[Theorem 4.3]{ji_nilpotency} that groups of polynomial growth, arithmetic groups and hyperbolic groups satisfy the Burghelea conjecture.

Furthermore, he proved some closure properties of the class of groups that he investigated (groups from this class satisfy the Burghelea conjecture and this class contains the aforementioned classes of groups).

\item Closure properties of classes of groups defined by properties implying the Burghelea conjecture were studied by several authors. From the results of Chadha and Passi \cite{chadha_passi} in combination with Eckmann's above mentioned results it follows that, e.g., elementary amenable groups of finite rational homological dimension (and so, especially, polycyclic-by-finite groups) satisfy the Burghelea conjecture. Further related results were established in \cite{emma_invent}, \cite{emma_passi_Bass}, \cite{emm_passi} and \cite{sykiotis}.\label{fsd23ds}
\end{enumerate}
To our surprise the above mentioned results seem to be the only ones about the Burghelea conjecture up to now.
\qed
\end{rem}

\begin{rem}
We might ask about invariance of the Burghelea conjecture under quasi-isometries of groups.

Sauer \cite{sauer} showed that the rational homological dimension is a quasi-isometry invariant of amenable groups, and it is conjectured that this should be true in general. But besides this result it seems that everything else related to the Burghelea conjecture is not invariant under quasi-isometries.

What we need for the Burghelea conjecture is finiteness of the homological dimension for the trivial $\IC G$-module $\IC$. But this is not known to be a quasi-isometry invariant.

Furthermore, for the Burghelea conjecture we have to know the structure of the reduced centralizers $C_g/\langle g \rangle$. Now if $G$ and $H$ are quasi-isometric, then the naiv guess would be to say the following: for all $g \in G$ of infinite order there exists an $h \in H$ of infinite order such that $C_g(G)$ and $G_h(H)$ are quasi-isometric. But this is false in general: Raghunathan \cite[Remark 2.15]{raghunathan} constructed two quasi-isometric groups with essentially different centralizers.
\qed
\end{rem}

\subsection{Bass conjecture}\label{secj23223}

\begin{defn}[Bass conjecture for $\calS G$]
We will say that $G$ satisfies the Bass conjecture for $\calS G$ if the image of the map $\trace \circ \chSG_\ast \colon K_\ast^\alg(\calS G) \to \PHC_\ast(\IC G)$ is contained in the first summand $\bigoplus_{[g] \in \langle G \rangle^{\fin}} H_{[\ast]}(C_g;\IC)$ of \eqref{eq:sdfjnk43}.
\qed
\end{defn}

\begin{rem}
>From Diagram \eqref{eq:dsf8934} it follows that we have the following implications for a fixed group $G$:
\[\xymatrix{\text{Burghelea conjecture} \ar@{=>}@/_1pc/[dr] &\\
& \text{Bass conjecture for } \calS G \ar@{=>}@/^/[dl]^<<<{\qquad\quad \text{if\,}\trace \circ \chSG_\ast\!\! \text{ rat.\!\! inj.}}\\
\text{ rational surjectivity of } \mu_\ast^\alg \ar@{=>}@/^1pc/[ur] \ar@{=>}[r] & \text{ rational~injectivity~of } \trace \circ \ch_\ast^{\calS G}}\]

Note that by Corollary~\ref{sdf23fwd} rational surjectivity of $\mu_\ast^\alg$ is equivalent to the rational Farrell--Jones conjecture for the coefficient ring $\calS$ and the group $G$.
\qed
\end{rem}

\begin{rem}
The following conjecture was called the strong Bass conjecture in \cite{jor} and the $\IC G$-Bass conjecture in \cite{bcm}: the image of $K_0^\alg(\IC G) \to \HH_0(\IC G)$ is contained in the first summand of \eqref{eq:sdfjsdfjk43}. The Bass conjecture for $\calS G$ implies the strong Bass conjecture by commutativity of the following diagram:
\[\xymatrix{
K_0^\alg(\IC G) \ar[r] \ar[d] & \HH_0(\IC G) \ar@{=}[r] & \HC_0(\IC G)\\
K_0^\alg(\calS G) \ar[rr] & & \PHC_0(\IC G) \ar[u]}\]
The strong Bass conjecture in turn implies the classical Bass conjecture \cite{bass} by a result of Linnell \cite[Lemma~4.1]{linnell_bass} together with the discussion in \cite[Section~3]{eckmann}.

The proof of \cite[Proposition~6.1]{bcm} also applies to the $\IC G$-Bass conjecture showing that it implies the idempotent conjecture for $\IC G$. In fact, the cited arguments show that the $\IC G$-Bass conjecture implies the conjecture that on $K_0^\alg(\IC G)$ the Kaplansky trace coincides with the augmentation trace, and this implies the idempotent conjecture.

So we have the following implications from the Bass conjecture for $\calS G$:
\[\xymatrix{& \text{classical Bass conjecture}\\
\text{Bass conjecture for }\calS G\text{ in the case }\ast = 0 \ar@{=>}@/^1pc/[ur] \ar@{=>}@/_1pc/[dr] &\\
& \text{idempotent conjecture for }\IC G}\]

The Bass conjecture in its different versions is known for far more classes of groups than the Burghelea conjecture.
\qed
\end{rem}

\begin{rem}
Geoghegan \cite{geoghegan} proved the following geometric interpretation of the classical Bass conjecture: a finitely presented group $G$ satisfies the classical Bass conjecture if and only if every homotopy idempotent self-map on a finite, connected complex with fundamental group $G$ has Nielsen number either zero or one. The latter condition can be reformulated by saying that every homotopy idempotent self-map of a closed, smooth, oriented manifold of dimension at least three and with fundamental group $G$ is homotopic to a map with exactly one fixed point. A survey of this geometric interpretation together with full proofs may be found in \cite{bcm2}.
\qed
\end{rem}

\subsection{Homological dimension}

In this section we will collect some well-known facts about (co-)homological dimension, which we will often use in the proofs in Section~\ref{knjdfs923}.

\begin{defn}
Let $G$ be a group and $R$ a ring with unit.
\begin{itemize}
\item Homological dimension $\hd_R(G)$ is the flat dimension of $R$ as an $RG$-module.
\item Cohomological dimension $\cd_R(G)$ is the projective dimension of $R$ as an $RG$-module.
\end{itemize}
For $R = \IZ$ we will write $\hd(-)$ and $\cd(-)$, i.e., not writing the subscript $-_{\IZ}$.

Furthermore, we define $\gd(G)$ as the geometric dimension of $G$, i.e., the least possible dimension of a CW-complex model for $BG$.
\qed
\end{defn}

\begin{lem}[{\cite[Section 4.1]{bieri}, \cite{eg}}]\label{jnsdu23}
We have the following characterizations of the above defined dimensions:
\begin{itemize}
\item $\hd_R(G) = \sup \{n\colon H_n(G;M) \not= 0 \text{ for some } RG\text{-module } M\}$
\item $\cd_R(G) = \sup \{n\colon H^n(G;M) \not= 0 \text{ for some } RG\text{-module } M\}$
\end{itemize}
We also have $\cd(G) = \gd(G)$ with the only possible exception $\cd(G) = 2$, $\gd(G) = 3$.\footnote{It is an open problem (the so-called Eilenberg--Ganea problem) whether a group $G$ with $\cd(G) = 2$, $\gd(G) = 3$ exists. In \cite[Theorem 8.7]{ganea_vs_whitehead} a group is constructed such that it either has this exotic dimension property or there exists a counter-example to the Whitehead conjecture. The latter would mean that there exists a connected subcomplex of an aspherical $2$-complex, which is not aspherical. See also the discussion in \cite[Remark 8.5.7]{davis}.}
\end{lem}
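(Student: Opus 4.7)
The approach is to treat the lemma as the combination of two standard homological-algebra identifications together with one genuinely deep geometric theorem, so I would split the proof accordingly.

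For (1) and (2), I would first recall the identifications $H_n(G;M) = \operatorname{Tor}_n^{RG}(R,M)$ and $H^n(G;M) = \operatorname{Ext}^n_{RG}(R,M)$, where in both cases $R$ carries the trivial $RG$-module structure. By the standard dimension-shifting / syzygy argument, the flat dimension of an $RG$-module $N$ is the smallest $d \in \mathbb{N}_0 \cup \{\infty\}$ such that $\operatorname{Tor}^{RG}_{d+1}(N, -)$ vanishes on the whole category of $RG$-modules; specializing $N = R$ gives (1) directly from the definition $\hd_R(G) := \operatorname{flat\,dim}_{RG}(R)$. The analogous characterization of projective dimension via vanishing of $\operatorname{Ext}^{d+1}_{RG}(R,-)$ then yields (2). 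These are routine and I would simply cite Bieri or Brown without calculation.

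For the inequality $\cd(G) \le \gd(G)$, the plan is the elementary argument: if $BG$ admits a CW model of dimension $n$, then the cellular chain complex of its universal cover $EG$ is a length-$n$ free resolution of $\mathbb{Z}$ over $\mathbb{Z}G$, which directly bounds the projective dimension of $\mathbb{Z}$ by $n$. The reverse inequality is the content of the Eilenberg--Ganea theorem: one takes a projective resolution of $\mathbb{Z}$ of length $\cd(G)$ and realizes it geometrically by attaching cells inductively to a presentation complex, killing homotopy groups stage by stage. The key technical point is that, above dimension $2$, one can always perform such attachments without increasing the overall dimension, so that $\gd(G) = \cd(G)$ whenever $\cd(G) \ge 3$. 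The cases $\cd(G) \in \{0,1\}$ are handled separately: $\cd(G) = 0$ forces $G$ trivial, and by Stallings--Swan $\cd(G) = 1$ forces $G$ free, whence $\gd(G) = 1$. This leaves only $\cd(G) = 2$ as a potentially exceptional case, where the Eilenberg--Ganea construction a priori produces a $3$-dimensional model.

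The main obstacle is the Eilenberg--Ganea realization step itself, in particular the combinatorial verification that the obstruction to reducing a $3$-dimensional model to a $2$-dimensional one when $\cd(G) = 2$ is precisely the unresolved Eilenberg--Ganea problem (equivalently a failure of the Whitehead conjecture, as mentioned in the footnote). In a proof plan I would not reproduce this construction but rather quote \cite{eg} and, for the footnote's refinement, the Bestvina--Brady-style result of \cite{ganea_vs_whitehead}. The key steps, in order, are thus: (i) translate $\hd_R$ and $\cd_R$ into $\operatorname{Tor}$- and $\operatorname{Ext}$-vanishing statements; (ii) observe the cellular-resolution inequality $\cd \le \gd$; (iii) dispose of the low-dimensional cases via Stallings--Swan; (iv) invoke Eilenberg--Ganea in the remaining range $\cd(G) \ge 3$.
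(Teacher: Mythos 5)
The paper does not prove this lemma; it merely cites \cite[Section 4.1]{bieri} and \cite{eg} and moves on. Your plan correctly reconstructs the standard proof that those references contain: the Tor/Ext reformulations of flat and projective dimension for the trivial module $R$, the cellular-resolution bound $\cd(G)\le\gd(G)$, the Stallings--Swan theorem disposing of $\cd(G)\in\{0,1\}$, and the Eilenberg--Ganea realization theorem handling $\cd(G)\ge 3$ and isolating $\cd(G)=2$ as the one potentially exceptional value. This is exactly the route the cited sources take, so there is nothing to contrast with the paper's own argument. One small point of precision: the Eilenberg--Ganea theorem is usually phrased as $\gd(G)\le\max(3,\cd(G))$, which is what delivers $\gd(G)=\cd(G)$ for $\cd(G)\ge 3$; your phrase that attachments ``can always be performed without increasing the dimension'' above $2$ is a looser gloss of the same fact, but the intended meaning is clear and the conclusion is right.
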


\begin{lem}[{\cite[Theorem 4.6 \& Proposition 4.9]{bieri}}]\label{lem:efs000}
Let $G$ be a countable group and $R$ a commutative ring with unit. Then
\[\hd_R(G) \le \cd_R(G) \le \hd_R(G)+1.\]
If $S < G$ is a subgroup, then $\hd_R(S) \le \hd_R(G)$ and $\cd_R(S) \le \cd_R(G)$.
\end{lem}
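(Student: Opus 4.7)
The inequality $\hd_R(G)\le\cd_R(G)$ is immediate: every projective $RG$-module is flat, so any projective resolution of the trivial module $R$ is a flat resolution. For the subgroup monotonicity, fix a set $T$ of left coset representatives of $S$ in $G$; then $RG\cong\bigoplus_{t\in T}tRS$ as right $RS$-modules, so $RG$ is free, hence flat, over $RS$. Consequently, restriction of scalars takes projective (respectively flat) $RG$-modules to projective (respectively flat) $RS$-modules, since a direct summand of a free $RG$-module is a direct summand of a free $RS$-module. Restricting a resolution of minimal length of $R$ over $RG$ to $RS$ therefore yields a resolution of $R$ over $RS$ of the same length, giving $\cd_R(S)\le\cd_R(G)$ and $\hd_R(S)\le\hd_R(G)$.

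The substantive part is $\cd_R(G)\le\hd_R(G)+1$. Set $n:=\hd_R(G)$ and assume it finite. My plan is to use the countability of $G$ to arrange the $n$-th syzygy to be countably generated, and then to invoke Lazard's theorem to force its projective dimension to be at most one. Starting from $F_0=RG$ and using that the augmentation ideal is generated by the countable set $\{g-1 \mid g\in G\}$, one builds a free resolution $F_\bullet\to R$ of the trivial module in which the $n$-th syzygy $K:=\image(F_n\to F_{n-1})$ is countably generated as a quotient of a countably generated free module. The dimension-shift identity
\[\operatorname{Tor}_i^{RG}(K,M)\cong \operatorname{Tor}_{i+n}^{RG}(R,M)=H_{i+n}(G;M)\qquad\text{for }i\ge 1\]
together with the definition of $n$ shows that $K$ is flat.

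Lazard's theorem now applies: every flat module is a filtered colimit of finitely generated free modules, and this colimit may be arranged over $\IN$ when the module is countably generated. The mapping-telescope presentation of such a colimit yields a short exact sequence $0\to P_1\to P_0\to K\to 0$ with $P_0,P_1$ free, so $K$ has projective dimension at most one. Splicing with the truncated resolution $0\to K\to F_{n-1}\to\cdots\to F_0\to R\to 0$ produces a projective resolution of $R$ of length $\le n+1$, whence $\cd_R(G)\le n+1$. The main obstacle is the interaction of the two countability conditions: the countability of $G$ must be used to keep the relevant syzygy countably generated, while Lazard's theorem then converts countable generation plus flatness into projective dimension at most one. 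Once these two inputs are in place the rest is a routine splicing of resolutions.
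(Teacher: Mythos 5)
The paper itself gives no proof of this lemma; it simply cites Bieri's lecture notes, so there is no ``paper's own proof'' to compare against. Your plan is the standard one (and, I believe, Bieri's), but as written the central step has a real gap in two related places.

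First, the assertion that one can build a free resolution $F_\bullet\to R$ of the trivial module with the $n$-th syzygy $K$ countably generated is not justified by the observation about the augmentation ideal. That observation produces a countably generated $F_1$, but the next kernel $\ker(F_1\to F_0)$ is merely a submodule of a countably generated free module over the typically non-Noetherian ring $RG$, and there is no reason for it to be countably generated; the iteration you describe stalls after one step. The clean fix is to take the bar resolution $\cdots\to R[G^{n+1}]\to\cdots\to R[G]\to R\to 0$: if $G$ is countable then each $R[G^{n+1}]$ is free of countable rank over $RG$, so \emph{every} syzygy is countably generated, and in fact countably \emph{presented}, since $K=\image(F_n\to F_{n-1})$ sits in an exact sequence $F_{n+1}\to F_n\to K\to 0$ with $F_n,F_{n+1}$ of countable rank.

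Second, and more seriously, the sentence ``this colimit may be arranged over $\IN$ when the module is countably generated'' is exactly where the telescope trick can fail. Given a Lazard presentation $K=\colim_{i\in I}P_i$ over an arbitrary filtered $I$, countable generation of $K$ only lets you find a countable subfamily through which all generators factor; it does not let you find a countable directed (or cofinal) subsystem, because you cannot control where relations die. The telescope argument $0\to\bigoplus P_{j_k}\to\bigoplus P_{j_k}\to K\to 0$ genuinely requires $K$ to be a colimit over a countable chain, and the standard hypothesis that guarantees this is that $K$ is countably \emph{presented}: then $K=\colim_n M_n$ with each $M_n$ finitely presented over a chain, each $M_n\to K$ factors through a finitely generated free module by Lazard's criterion, and one assembles a countable chain of finitely generated free modules with colimit $K$. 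With the bar resolution providing countable presentation of $K$, this step goes through and the rest of your splicing argument is fine. Everything else in the proposal (projective implies flat, $RG$ free over $RS$, restriction of resolutions) is correct.
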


\begin{lem}\label{lem:hd.eq.seq}
Let $0 \to A \to B \to C \to 0$ be a short exact sequence of groups.

Then $\hd_R(B) \le \hd_R(A) + \hd_R(C)$.
\end{lem}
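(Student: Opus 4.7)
The plan is to invoke the Lyndon--Hochschild--Serre spectral sequence associated to the extension $0 \to A \to B \to C \to 0$. If either $\hd_R(A)$ or $\hd_R(C)$ is infinite there is nothing to show, so set $a := \hd_R(A) < \infty$ and $c := \hd_R(C) < \infty$. For any $RB$-module $M$ the LHS spectral sequence takes the form
\[
E^2_{p,q} = H_p(C; H_q(A; M)) \Longrightarrow H_{p+q}(B; M),
\]
where $H_q(A;M)$ inherits its $RC$-module structure from the conjugation action of $B$ on $A$, passing to the quotient $C$ in homology.

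First I would use Lemma \ref{jnsdu23} to translate the hypotheses into vanishing statements: since $a = \hd_R(A)$, one has $H_q(A; N) = 0$ for every $RA$-module $N$ and every $q > a$; applying this to $N = M$ restricted to $A$ gives $E^2_{p,q} = 0$ whenever $q > a$. Similarly, $c = \hd_R(C)$ yields $H_p(C; N') = 0$ for every $RC$-module $N'$ and every $p > c$; applying this in turn to the $RC$-modules $N' = H_q(A;M)$ with $0 \le q \le a$ gives $E^2_{p,q} = 0$ whenever $p > c$.

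Therefore the entire $E^2$-page, and hence the $E^\infty$-page, is concentrated in the rectangle $\{(p,q) : 0 \le p \le c,\ 0 \le q \le a\}$. In particular $E^\infty_{p,q} = 0$ whenever $p + q > a + c$. Since the spectral sequence is first-quadrant and converges to $H_{p+q}(B; M)$, a standard filtration argument yields $H_n(B; M) = 0$ for all $n > a + c$. Because $M$ was an arbitrary $RB$-module, the reverse direction of Lemma \ref{jnsdu23} gives $\hd_R(B) \le a + c$, as desired.

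The only delicate point (not really an obstacle) is making sure the coefficient systems are treated correctly on the $E^2$-page: one must remember that although the vanishing bound on $A$ is applied to $M$ viewed merely as an $RA$-module, the vanishing bound on $C$ has to be applied to the twisted coefficient module $H_q(A;M)$, and the characterization in Lemma \ref{jnsdu23} is precisely what allows this without further hypotheses on the $C$-action.
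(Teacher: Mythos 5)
Your proof is correct and follows exactly the route taken in the paper: invoke the Lyndon--Hochschild--Serre spectral sequence $H_p(C;H_q(A;M)) \Rightarrow H_{p+q}(B;M)$ and read off the vanishing range from the $E^2$-page via Lemma~\ref{jnsdu23}. You have merely written out in detail the filtration argument the paper leaves implicit, including the correct handling of the twisted $RC$-module structure on $H_q(A;M)$.
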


\begin{proof}
The claim follows from the Lyndon--Hochschild--Serre spectral sequences
\begin{equation}
\label{eqjnk23fw}
H_p(C; H_q(A;M)) \Rightarrow H_{p+q}(B;M) \quad \text{and} \quad H^p(C; H^q(A;M)) \Rightarrow H^{p+q}(B;M)
\end{equation}
for every $RG$-module $M$.
\end{proof}

\begin{lem}\label{lemjsd9823ds}
Let $G$ be a finite group. Then $\hd_{\IQ}(G) = 0$.

Let $H < G$ be a subgroup of finite index and let $G$ be countable. Then we have the inequality $\hd_{\IQ}(G) \le \hd_{\IQ}(H) + 1$.
\end{lem}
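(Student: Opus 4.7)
The first claim that $\hd_\IQ(G) = 0$ for finite $G$ I would prove via Maschke's theorem: since $\IQ$ has characteristic zero and $G$ is finite, the group ring $\IQ G$ is semisimple, so every $\IQ G$-module is projective and in particular flat; hence the trivial $\IQ G$-module $\IQ$ is flat and $\hd_\IQ(G) = 0$. Alternatively, and in the spirit of the homological characterization in Lemma~\ref{jnsdu23}, the standard transfer argument shows that $H_n(G;M)$ is annihilated by $|G|$ for every $n \geq 1$ and every $\IQ G$-module $M$; since $|G|$ acts invertibly on the $\IQ$-vector space $M$, these homology groups vanish.

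For the second assertion my plan is to replace $H$ by its normal core, thereby making the Lyndon–Hochschild–Serre estimate of Lemma~\ref{lem:hd.eq.seq} directly available. Let
\[ N := \bigcap_{g \in G} g H g^{-1}, \]
the kernel of the left translation action of $G$ on the finite set $G/H$. Then $N$ is normal in $G$, contained in $H$, and satisfies $[G:N] \leq [G:H]!\, < \infty$. Applying the subgroup monotonicity part of Lemma~\ref{lem:efs000} to $N \leq H$ gives $\hd_\IQ(N) \leq \hd_\IQ(H)$.

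Now I would apply Lemma~\ref{lem:hd.eq.seq} to the short exact sequence $1 \to N \to G \to G/N \to 1$, obtaining $\hd_\IQ(G) \leq \hd_\IQ(N) + \hd_\IQ(G/N)$. The quotient $G/N$ is finite, so the first part of the present lemma yields $\hd_\IQ(G/N) = 0$. Chaining the inequalities gives $\hd_\IQ(G) \leq \hd_\IQ(H)$, which is even slightly sharper than the stated bound with~$+1$.

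There is no real obstacle to this argument: the only mildly non-obvious step is the passage from $H$ to its normal core $N$, which is what makes the Lyndon–Hochschild–Serre spectral sequence applicable when $H$ itself need not be normal in $G$. The countability hypothesis on $G$ is harmless since it passes to the finite-index subgroups $H$ and $N$, and it is not actually needed for the two invocations of Lemmas~\ref{lem:efs000} and~\ref{lem:hd.eq.seq} used above.
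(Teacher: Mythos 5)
Your proof is correct, and on the second assertion it takes a genuinely different route from the paper that is also cleaner. The paper invokes Brown's Proposition III.10.1 (Serre's theorem) to get $\cd_\IQ(G) \le \cd_\IQ(H)$ directly, and then pays a $+1$ penalty from chaining $\hd_\IQ(G) \le \cd_\IQ(G) = \cd_\IQ(H) \le \hd_\IQ(H) + 1$ via Lemma~\ref{lem:efs000}. You avoid the detour through cohomological dimension entirely by passing to the normal core $N$, applying the Lyndon--Hochschild--Serre estimate of Lemma~\ref{lem:hd.eq.seq} to $1 \to N \to G \to G/N \to 1$, and killing the quotient term with the first assertion; this yields the sharper bound $\hd_\IQ(G) \le \hd_\IQ(H)$, which combined with subgroup monotonicity even gives equality. (For the first assertion, your Maschke/transfer arguments are equivalent to the paper's citation of Brown Corollary III.10.2 together with Lemma~\ref{lem:efs000}.) Your approach is internal to the paper's own lemmas, whereas the paper outsources the key finite-index step to Brown; the trade-off is that you need the normal core observation, which the paper's citation-based argument sidesteps.
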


\begin{proof}
The first assertion is \cite[Corollary III.10.2]{brown} combined with Lemma~\ref{lem:efs000}.

For the second assertion we first get by \cite[Proposition III.10.1]{brown} the inequality $\cd_\IQ(G) \le \cd_\IQ(H)$ and combined with Lemma~\ref{lem:efs000} above we get $\cd_\IQ(G) = \cd_\IQ(H)$. Applying Lemma~\ref{lem:efs000} twice we get $\hd_\IQ(G) \le \cd_\IQ(G) = \cd_\IQ(H) \le \hd_\IQ(H) + 1$.
\end{proof}

\begin{lem}\label{lem:powers}
$\hd_{\IQ}(C_{g^n} / \langle g^n\rangle) < \infty \implies 
\hd_{\IQ}(C_g/\langle g^n \rangle) < \infty \implies 
\hd_{\IQ}(C_g / \langle g\rangle) < \infty$, for all $n \neq 0$.
\end{lem}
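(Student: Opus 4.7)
The plan is to prove the two implications separately, since they rely on quite different group-theoretic inputs.

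For the first implication, the strategy is to realize $C_g/\langle g^n\rangle$ as a subgroup of $C_{g^n}/\langle g^n\rangle$ and then invoke Lemma~\ref{lem:efs000}. First I would note the elementary inclusions $\langle g^n\rangle \leq \langle g\rangle \leq C_g \leq C_{g^n}$ (any element centralizing $g$ centralizes $g^n$, and $\langle g\rangle$ clearly sits in its own centralizer). Since $\langle g^n\rangle$ is contained in both $C_g$ and $C_{g^n}$ and is normal in $C_{g^n}$, the quotient $C_g/\langle g^n\rangle$ embeds as a subgroup of $C_{g^n}/\langle g^n\rangle$. The subgroup monotonicity in Lemma~\ref{lem:efs000} then gives $\hd_\IQ(C_g/\langle g^n\rangle) \leq \hd_\IQ(C_{g^n}/\langle g^n\rangle)<\infty$.

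For the second implication, I would exploit the short exact sequence
\[
1 \longrightarrow \langle g\rangle/\langle g^n\rangle \longrightarrow C_g/\langle g^n\rangle \longrightarrow C_g/\langle g\rangle \longrightarrow 1,
\]
whose kernel is cyclic of order $|n|$, hence finite (as $g$ has infinite order). Given any $\IQ(C_g/\langle g\rangle)$-module $M$, pull it back along the projection to a $\IQ(C_g/\langle g^n\rangle)$-module. The Lyndon--Hochschild--Serre spectral sequence reads
\[
E^2_{p,q} = H_p\bigl(C_g/\langle g\rangle;\ H_q(\langle g\rangle/\langle g^n\rangle;M)\bigr)\ \Longrightarrow\ H_{p+q}\bigl(C_g/\langle g^n\rangle;M\bigr).
\]
Because $\langle g\rangle/\langle g^n\rangle$ acts trivially on $M$ and is a finite group with $|n|$ invertible in $\IQ$, Maschke's theorem gives $H_q(\langle g\rangle/\langle g^n\rangle;M)=0$ for $q\geq 1$ and $=M$ for $q=0$. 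The spectral sequence therefore collapses to the edge isomorphism $H_p(C_g/\langle g\rangle;M) \cong H_p(C_g/\langle g^n\rangle;M)$. The characterization of $\hd_\IQ$ in Lemma~\ref{jnsdu23} then yields $\hd_\IQ(C_g/\langle g\rangle) \leq \hd_\IQ(C_g/\langle g^n\rangle)<\infty$.

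The only subtle point (and what I would regard as the genuine content of the lemma) is the second implication: Lemma~\ref{lem:hd.eq.seq} applied to the short exact sequence above goes the \emph{wrong} way, bounding $\hd_\IQ$ of the middle term by the sum of the outer terms. One cannot invert this for free, so the rational invisibility of the finite kernel via the LHS spectral sequence is essential. The first implication, by contrast, is completely formal once one notices the chain of inclusions.
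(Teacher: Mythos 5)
Your proposal is correct and follows the same route as the paper: realize $C_g/\langle g^n\rangle$ as a subgroup of $C_{g^n}/\langle g^n\rangle$ for the first implication, and use the Lyndon--Hochschild--Serre spectral sequence with the finite kernel $\langle g\rangle/\langle g^n\rangle \cong \IZ/n\IZ$ for the second. You fill in the Maschke-type collapse that the paper leaves implicit and correctly flag why Lemma~\ref{lem:hd.eq.seq} alone would be insufficient, but this is an elaboration of, not a departure from, the paper's argument.
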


\begin{proof}
Assume $\hd_{\IQ}(C_{g^n} / \langle g^n\rangle) < \infty$. We have $C_g < C_{g^n}$, thus $C_g/\langle g^n\rangle < C_{g^n}/\langle g^n\rangle$ and so $\hd_{\IQ}(C_g/\langle g^n\rangle) < \infty$.

There is a short exact sequence $0 \to \IZ/n\IZ \to C_g/\langle g^n\rangle \to C_{g}/\langle g\rangle \to 0$ and it follows from
Lyndon--Hochschild--Serre spectral sequence that $\hd_{\IQ}(C_g/\langle g^n \rangle) = \hd_{\IQ}(C_{g}/\langle g \rangle)$.
\end{proof}

\begin{lem}[{\cite[Theorem 2]{bln}}]\label{sdf23fdjn23}
Let $\EGunder$ denote the classifying space for proper actions of a group $G$.

Then we have $\cd_\IQ(G) \le \gdunder(G)$, where the latter is the least possible dimension of a model for $\EGunder$. Consequently, we have $\hd_{\IQ}(G) \le \gdunder(G)$.
\end{lem}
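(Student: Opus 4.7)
The plan is to use a minimal-dimensional model for $\EGunder$ to build a length-$\gdunder(G)$ projective resolution of $\IQ$ over $\IQ G$, which directly yields the claimed bound on $\cd_\IQ(G)$; then the bound on $\hd_\IQ(G)$ follows from Lemma~\ref{lem:efs000}.

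More concretely, let $n := \gdunder(G)$ and let $X$ be an $n$-dimensional CW-model for $\EGunder$. Because $X$ is a model for proper actions, every cell stabilizer $F$ is a finite subgroup of $G$, and by construction $X$ is contractible. First I would consider the cellular chain complex $C_\ast(X;\IQ)$ together with the augmentation $C_0(X;\IQ) \to \IQ$. Since $X$ is contractible, this gives an exact sequence
\[ 0 \to C_n(X;\IQ) \to \cdots \to C_0(X;\IQ) \to \IQ \to 0\]
of $\IQ G$-modules of length~$n$.

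The key step is to show that each $C_k(X;\IQ)$ is a projective $\IQ G$-module. Choosing $G$-representatives of the $k$-cells one has
\[C_k(X;\IQ) \cong \bigoplus_\sigma \IQ[G/F_\sigma] \cong \bigoplus_\sigma \IQ G \otimes_{\IQ F_\sigma} \IQ,\]
where $F_\sigma$ runs over the (finite) stabilizers of the $k$-cell orbits. Since $|F_\sigma|$ is invertible in $\IQ$, the trivial module $\IQ$ is a direct summand of the regular module $\IQ F_\sigma$ via the idempotent $\frac{1}{|F_\sigma|}\sum_{f \in F_\sigma} f$, hence $\IQ$ is projective as a $\IQ F_\sigma$-module. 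Induction preserves projectivity, so each summand $\IQ G \otimes_{\IQ F_\sigma} \IQ$, and therefore $C_k(X;\IQ)$ itself, is projective over $\IQ G$. (One should be mildly careful if there are infinitely many orbits of cells in a given dimension, but arbitrary direct sums of projectives are again projective, so this causes no issue.)

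Thus $C_\ast(X;\IQ) \to \IQ$ is a projective resolution of length $n$, giving $\cd_\IQ(G) \le n = \gdunder(G)$. The inequality $\hd_\IQ(G) \le \cd_\IQ(G)$ from Lemma~\ref{lem:efs000} then yields $\hd_\IQ(G) \le \gdunder(G)$, as claimed. The only nontrivial input is the projectivity of induced trivial modules over finite subgroups, which is precisely where $\IQ$ (as opposed to $\IZ$) coefficients are essential; this is also the step I would expect to be the main conceptual point, though it is not really an obstacle.
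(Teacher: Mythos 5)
The paper does not give its own proof of this lemma; it cites \cite[Theorem 2]{bln}. Your argument is correct and is the standard proof: the cellular chain complex of an $n$-dimensional $G$-CW model for $\EGunder$, with $\IQ$-coefficients, is a length-$n$ projective resolution of the trivial module $\IQ$ over $\IQ G$, because the cell stabilizers are finite and the induced permutation modules $\IQ[G/F_\sigma] \cong \IQ G \otimes_{\IQ F_\sigma} \IQ$ are projective over $\IQ G$ (Maschke plus the fact that induction preserves projectivity). One minor remark: the inequality $\hd_\IQ(G) \le \cd_\IQ(G)$ holds for every group, since a projective resolution is in particular a flat resolution, so you do not actually need to invoke the countability hypothesis of Lemma~\ref{lem:efs000} at that final step.
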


\subsection{Asymptotic dimension}

If $G$ is a countable group, we equip it with a proper, left-invariant metric. Such metrics always exist (even if the group is not finitely generated) and any two of them are coarsely equivalent. So we may talk about large scale invariants of $G$, e.g., asymptotic dimension.

In this section we are going to state two conjectures about groups of finite asymptotic dimension, which would together imply the Burghelea conjecture for groups of finite asymptotic dimension.

We verify them for almost all classes of groups which are known to have finite asymptotic dimension.

\begin{conj}\label{conj:njdsf89}
Let $G$ be a discrete group of finite asymptotic dimension.
\begin{enumerate}
\item\label{12342dsf} $\hd_\IQ(G) < \infty$.
\item If $G$ is torsion-free, then $\hd(G) < \infty$.
\end{enumerate}
\end{conj}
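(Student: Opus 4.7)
My plan is to reduce both parts to a statement about finite-dimensional classifying spaces and then try to build such a space out of the coarse data provided by finite asymptotic dimension. For part (\ref{12342dsf}), Lemma~\ref{sdf23fdjn23} reduces the claim to proving $\gdunder(G) < \infty$, since then $\hd_\IQ(G) \le \cd_\IQ(G) \le \gdunder(G) < \infty$. For the torsion-free case of part (2), one has $\EGunder = EG$ and therefore $\gdunder(G) = \gd(G)$; a finite-dimensional model would give $\cd(G) \le \gd(G) < \infty$ by Lemma~\ref{jnsdu23}, and then $\hd(G) < \infty$ by Lemma~\ref{lem:efs000}. So in both cases the task reduces to producing a finite-dimensional contractible $G$-CW complex with finite cell stabilizers (free stabilizers in the torsion-free case).

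The natural candidate is a Rips-type complex $P_R(G)$, whose simplices are the finite subsets of $G$ of diameter at most $R$ in some chosen proper left-invariant metric on $G$. The action of $G$ is simplicial with finite stabilizers and free when $G$ is torsion-free. The hypothesis $\asdim(G) = n$ provides, for every scale $R$, a uniformly bounded cover $\mathcal{U}_R$ of $G$ of multiplicity at most $n+1$ and Lebesgue number at least $R$, whose nerve is a simplicial complex of dimension at most $n$. The strategy is to combine contractibility at large scale, which is what $P_R(G)$ is designed to supply, with the low-dimensional nerves coming from the asymptotic dimension data, so as to obtain a single $G$-equivariant finite-dimensional contractible complex. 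In effect one wants an equivariant coarsening or deformation retraction of $P_R(G)$ onto a subcomplex of the nerve, in the spirit of Bestvina--Mess and Dranishnikov.

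The hard part is that contractibility of $P_R(G)$ for large $R$ is not a consequence of finite asymptotic dimension alone: it holds for hyperbolic groups (Rips) and is built in for $\CAT(0)$ and systolic groups, but is unavailable in the generality of the conjecture. Conversely, nerves of bounded covers are $G$-equivariant only up to bounded combinatorial ambiguity, so controlling dimension, contractibility, and finiteness of stabilizers simultaneously is delicate. This is precisely why the paper proceeds in Theorem~B class by class, using the specific geometric model available in each case (a $\CAT(0)$ space, a systolic complex, Teichm\"uller space, a symmetric space of non-compact type, or a geometric decomposition for $3$-manifolds), rather than attempting a uniform construction. Any general proof would have to identify an intrinsic geometric replacement for these models that is produced directly from the hypothesis $\asdim(G) < \infty$; I expect this to be the main obstacle and, in fact, the whole point of the conjecture.
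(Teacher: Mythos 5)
You have correctly read the situation: the statement in question is a \emph{conjecture}, not a theorem the paper proves, so there is no proof in the paper to compare against. Your reduction of both parts to the finiteness of $\gdunder(G)$ is exactly the ``slightly stronger version'' of the conjecture that the authors themselves formulate in the remark immediately following it, and which they note is not known to be equivalent to the conjecture as stated (since $\hd_\IQ(G) < \infty$ is not known to force $\gdunder(G) < \infty$). Your reduction runs in the harmless direction, so it is sound. The Dranishnikov result you gesture at is precisely \cite[Proposition 4.10]{dranish_approach}, also cited by the authors: under the extra hypotheses that $G$ is finitely presented and $BG$ is finitely dominated, one gets the quantitative bound $\cd(G) \le \asdim(G)$; the whole difficulty of the conjecture is removing those finiteness hypotheses, which is where your Rips-complex program stalls.

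Your diagnosis of the obstruction is the right one. A Rips complex $P_R(G)$ gives a proper $G$-CW complex whose stabilizers are finite (free in the torsion-free case), but its dimension grows without bound in $R$ and its contractibility for large $R$ is not a consequence of $\asdim(G) < \infty$ alone---that is what makes hyperbolicity or $\CAT(0)$-type combings so useful in the cases the paper does handle. The nerves of the covers witnessing $\asdim(G) = n$ have the right dimension $\le n$, but they carry only an approximate $G$-action and supply no contractibility. Reconciling the two constructions is precisely the open problem; the paper sidesteps it by providing, for each class in Theorem~B, an ad hoc finite-dimensional proper $G$-space (Davis or Salvetti complex, systolic complex, $K\backslash G$, Teichm\"uller-type models, geometric decompositions for $3$-manifolds). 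So your ``proof attempt'' is, accurately, not a proof; it is an honest account of the reduction and the gap, and it matches the authors' own framing of what remains to be done.
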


\begin{rem}
There is a quantitative version of Conjecture~\ref{conj:njdsf89} proposing the inequality $\hd_\IQ(G) \le \asdim(G)$, resp., $\hd(G) \le \asdim(G)$ in the torsion-free case. We won't persue the refined version in this paper since this refinement is not needed for the application to the Burghelea conjecture.

Dranishnikov \cite[Proposition 4.10]{dranish_approach} showed that if $G$ is finitely presented and $BG$ is finitely dominated\footnote{These two conditions together are equivalent to the group $G$ being finitely presented and of type FP \cite[Proposition VIII.6.4 \& Theorem VIII.7.1]{brown}. The latter means that $\IZ$ admits a finite projective resolution over $\IZ G$. Conjecturely, groups admitting a finitely dominated $BG$ also admit a finite one (this follows from the $K$-theoretic Farrell--Jones conjecture).}, then $\cd(G) \le \asdim(G)$ and consequently $\hd(G) \le \asdim(G)$.
\qed
\end{rem}

\begin{rem}
Since we always have the inequality $\hd_\IQ(G) \le \gdunder(G)$, but it is not known whether the converse is also true, one might propose a slightly stronger version of the above conjecture: ``$\text{If } G \text{ has finite asymptotic dimension, then } \gdunder(G) \text{ is also finite.}$''

Note that this version includes the case of torsion-free groups, since if $G$ is torsion-free then we have $\EGunder = EG$.

For many of the classes of groups which we discuss in this paper we even prove this slightly stronger version of the conjecture.
\qed
\end{rem}

Another conjecture about asymptotic dimension that we will pursue in this paper is the following one:

\begin{conj}\label{conjdsf89023}
For $g \in G$ let $C_g < G$ denote its centralizer.

If $G$ has finite asymptotic dimension, then $C_g/\langle g \rangle$ also has finite asymptotic dimension.
\end{conj}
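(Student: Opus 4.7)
The first step is a reduction to the case where $g$ has infinite order: if $g$ has finite order then $\langle g \rangle$ is finite, so the quotient $C_g/\langle g\rangle$ is coarsely equivalent to the subgroup $C_g \le G$ and inherits $\asdim(C_g/\langle g\rangle) \le \asdim(G) < \infty$. From here on assume $g$ has infinite order, so $\langle g\rangle \cong \IZ$ and, crucially, $\langle g\rangle$ is \emph{central} in $C_g$, yielding a central extension
\[ 1 \to \IZ \to C_g \to C_g/\langle g\rangle \to 1 \]
with $\asdim(C_g)\le \asdim(G)<\infty$. The conjecture then reduces to the stronger statement: \emph{for every central extension of countable groups $1 \to \IZ \to H \to Q \to 1$ with $\asdim(H) < \infty$, one has $\asdim(Q) < \infty$.}

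To attack this stronger statement, the plan is to transport a finite-dimensional covering structure from $H$ down to $Q$ along the quotient map $\pi\colon H \to Q$. At scale $R$, start with a uniformly bounded cover $\mathcal{U} = \{U_i\}$ of $H$ of multiplicity at most $\asdim(H)+1$ and $R$-Lebesgue number. A direct push-forward fails because the $\pi$-fibres are $\langle g\rangle$-cosets, which are unbounded. Centrality, however, ensures that left-translation by $\langle g\rangle$ is by isometries commuting with all other left-translations; the natural target is then the construction of a quasi-isometric section $s\colon Q \to H$ of $\pi$, since any such section would embed $Q$ coarsely into $H$ and give $\asdim(Q) \le \asdim(s(Q)) \le \asdim(H)$. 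Equivalently, one seeks a ``fundamental domain'' for the $\langle g\rangle$-action on $H$ whose $R$-neighbourhood still has uniformly bounded pieces for every scale $R$.

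The principal obstacle is the possible distortion of $\langle g\rangle$ inside $H$. In the undistorted case, $\langle g\rangle$-orbits are quasi-lines, a fundamental-segment transversal is available, and the above scheme should go through. When $\langle g\rangle$ is distorted in $H$ — which happens for instance in the integer Heisenberg group, a group of finite asymptotic dimension — the $\langle g\rangle$-orbits coarsely collapse and no quasi-isometric section of $\pi$ exists, even though the quotient $Q$ may still have finite asymptotic dimension. For the specific classes appearing in Theorem~B this obstacle is circumvented by an \emph{ad hoc} geometric model for $C_g/\langle g\rangle$: for $\CAT(0)$ groups, the Flat Torus Theorem gives $\mathrm{Min}(g) = Y \times \IR$ with $C_g$ acting geometrically and $\langle g\rangle$ acting purely on the $\IR$-factor, so $C_g/\langle g\rangle$ acts geometrically on $Y$ and inherits finite asymptotic dimension from $G$; for (relatively) hyperbolic groups the centralizer of an infinite-order element is virtually cyclic, so the quotient is essentially finite; for mapping class groups one inducts on complexity using the Nielsen--Thurston canonical reduction system; and for discrete subgroups of almost connected Lie groups and for $3$-manifold groups one exploits the ambient geometric decomposition. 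A uniform proof of the conjecture that bypasses these case-by-case arguments would amount to establishing the central-extension statement above, and producing such a uniform argument is the main remaining open problem.
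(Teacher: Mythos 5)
You are right that this is a conjecture, not a theorem, and the paper offers no proof of it in general: the authors only verify it case by case for the classes of groups appearing in Theorem~B. Your reduction to the central extension
\[ 1 \to \IZ \to C_g \to C_g/\langle g\rangle \to 1 \]
and your observation that a quasi-isometric section need not exist (with the integer Heisenberg group as the canonical example of distortion, where nonetheless the conclusion $\asdim(Q)<\infty$ still holds) is an accurate diagnosis of why there is no uniform argument. Your sketch of the case-by-case strategy also tracks the paper: for $\CAT(0)$ groups the paper's Proposition~4.7 uses exactly the Flat Torus Theorem (\cite[Theorem~II.6.12]{bridson_haefliger}) to show that a finite-index subgroup $H < C_g$ splits as $\langle g\rangle \oplus H/\langle g\rangle$, so that $C_g/\langle g\rangle$ virtually embeds as a subgroup of $C_g$ — which is the same geometric content as your $\mathrm{Min}(g) = Y \times \IR$ picture, phrased slightly differently; for (relatively) hyperbolic elements the centralizers are virtually cyclic (Lemma~4.10); for mapping class groups the paper inducts through the Nielsen--Thurston canonical reduction system and the cutting homomorphism; for discrete subgroups of Lie groups the paper peels off the solv-radical and nil-radical and then treats parabolic elements via the boundary of the symmetric space; and for $3$-manifold groups it reduces via prime and Seifert decompositions. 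One small point of emphasis: the paper does not attempt a geometric ``push-forward of covers'' argument at all — the uniform tools it actually uses are the permanence properties collected in Lemma~3.21 and Lemma~3.22 (subgroups, free products, extensions), combined with case-specific structural theorems about centralizers, so the search for a quasi-isometric section, while natural, is not part of any argument in the paper. Your honest conclusion that a uniform proof remains open is precisely the status of the conjecture.
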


The above conjecture will be verified in this paper for almost all classes of groups which are known to have finite asymptotic dimension.

Combining both conjectures stated in this subsection we immediately get the following:

\begin{cor}\label{cor:sd9323}
Let Conjectures \ref{conj:njdsf89}.\ref{12342dsf} and \ref{conjdsf89023} be true.

Then groups of finite asymptotic dimension satisfy the Burghelea conjecture.
\end{cor}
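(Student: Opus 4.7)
The plan is very short because the two input conjectures do essentially all the work, and what remains is to unwind the definition of $T^G_\ast(g;\IC)$.

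First I would fix an element $g \in G$ of infinite order and look at the quotient $C_g/\langle g \rangle$. By Conjecture \ref{conjdsf89023}, since $\asdim(G) < \infty$, the quotient $C_g/\langle g \rangle$ also has finite asymptotic dimension. Then by Conjecture \ref{conj:njdsf89}.\ref{12342dsf}, applied to the group $C_g/\langle g \rangle$ instead of $G$, we deduce that
\[d := \hd_\IQ(C_g/\langle g\rangle) < \infty.\]
Using Lemma~\ref{jnsdu23} (the characterization of $\hd_\IQ$ via vanishing of homology with arbitrary coefficient modules), this implies $H_m(C_g/\langle g\rangle;\IC) = 0$ for every $m > d$.

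Next I would plug this into the definition
\[T^G_\ast(g;\IC) = \varprojlim \big( \cdots \xrightarrow{S} H_{\ast+2n}(C_g/\langle g\rangle;\IC) \xrightarrow{S} H_{\ast+2n-2}(C_g/\langle g\rangle;\IC) \xrightarrow{S} \cdots \big).\]
Pick $N$ so large that $\ast + 2N > d$. Then for every $n \ge N$ the term $H_{\ast+2n}(C_g/\langle g\rangle;\IC)$ is zero. A compatible family $(a_n)_n$ in the inverse system must therefore satisfy $a_n = 0$ for all $n \ge N$; using $a_{n-1} = S(a_n)$ inductively, every $a_n$ equals zero. Hence $T^G_\ast(g;\IC) = 0$.

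Since $g \in G$ was an arbitrary infinite order element, we get $\bigoplus_{[g] \in \langle G \rangle^{\infty}} T^G_\ast(g;\IC) = 0$, which is exactly the Burghelea conjecture for $G$. The only ``obstacle'' in this plan is essentially a matter of bookkeeping: one has to recognize that the rational homological dimension controls the truncation of the $S$-tower used to form $T^G_\ast(g;\IC)$; there is no genuine obstruction once the two input conjectures are granted.
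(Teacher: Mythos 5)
Your argument is correct and matches the paper's (implicit) proof: the paper routes through the class $E(\IQ)$ of Chadha--Passi and notes that the two conjectures place $G$ in $E(\IQ)$, while you unwind the same chain — Conjecture~\ref{conjdsf89023} gives $\asdim(C_g/\langle g\rangle) < \infty$, Conjecture~\ref{conj:njdsf89}.\ref{12342dsf} applied to $C_g/\langle g\rangle$ gives finite $\hd_\IQ$, and this forces $T^G_\ast(g;\IC)=0$. In fact your version is marginally leaner, since for the Burghelea conjecture alone the finiteness of $\hd_\IQ(G)$ itself is not required, only that of the reduced centralizers.
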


In the following lemma we list some well known properties of asymptotic dimension of groups and in the lemma afterwards we prove some permanence properties of the class of groups satisfying Conjecture~\ref{conjdsf89023}.

We will use the following lemmata in some of the proofs in Section~\ref{knjdfs923}.

\begin{lem}[{\cite[Sections 13 \& 14]{bell_dran}}]\label{lem:asdim}
Let $G$ be a countable group.

Asymptotic dimension is a coarse invariant. In particular:
\begin{enumerate}
\item Let $G' < G$ be a finite index subgroup. Then $\asdim(G')=\asdim(G)$.
\item Let $1 \to F \to G \to Q \to 1$ be an exact sequence of groups with $F$ a finite group. Then $\asdim(G)=\asdim(Q)$.
\end{enumerate}
Moreover, we have a general extension theorem:
\begin{enumerate}
\setcounter{enumi}{2}
\item Let $1 \to K \to G \to Q \to 1$ be an exact sequence of groups. Then we have $\asdim(G) \leq \asdim(K) + \asdim(Q)$.  
\end{enumerate}
\end{lem}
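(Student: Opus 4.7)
The plan is to reduce everything to the coarse invariance of asymptotic dimension together with the Bell--Dranishnikov extension theorem, all of which are already available in the cited reference \cite{bell_dran}; I will simply explain why each clause fits into that framework.

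First I would fix a proper, left-invariant metric $d_G$ on $G$, which exists because $G$ is countable, and recall that any two such metrics are coarsely equivalent, so the resulting large scale invariants (in particular $\asdim$) do not depend on the choice. For clause (1), I would observe that if $G' < G$ has finite index, then $(G',d_G|_{G'})$ is quasi-isometric to $(G,d_G)$: every element of $G$ lies within a uniformly bounded distance of $G'$ (namely the maximal word length of a set of coset representatives), so the inclusion is a coarse equivalence, giving $\asdim(G') = \asdim(G)$. For clause (2), the projection $G \to Q$ in a finite kernel extension becomes a coarse equivalence once $Q$ is equipped with any proper left-invariant metric: the fibres have uniformly bounded diameter (bounded by $|F|$ times the diameter of a generating set for $F$), which already suffices for coarse equivalence, and hence $\asdim(G) = \asdim(Q)$.

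Clause (3) is the real content; here I would invoke the Bell--Dranishnikov extension theorem. The proof strategy (which I am not redoing in detail, only sketching) is: given an $R$-bounded cover $\{U_i\}$ of $Q$ of multiplicity at most $\asdim(Q)+1$ and, for every sufficiently large scale $S$, an $S$-bounded cover $\{V_j\}$ of $K$ of multiplicity at most $\asdim(K)+1$, one lifts $\{U_i\}$ through a section (set-theoretic, with controlled distortion since $Q$ carries the quotient metric) and then refines each preimage using (left translates of) the cover $\{V_j\}$, pulled up to $G$. The key combinatorial input is that multiplicities multiply correctly, yielding a cover of $G$ of bounded geometry whose multiplicity is at most $(\asdim(K)+1)(\asdim(Q)+1)$; Bell and Dranishnikov further refine this to the sharper additive bound $\asdim(K) + \asdim(Q) + 1$ on the number of colors, which is equivalent to the inequality $\asdim(G) \le \asdim(K) + \asdim(Q)$.

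The main obstacle in (3) is the control of the lifted section: one must choose coset representatives of $K$ in $G$ so that their scale in $G$ is controlled by the scale in $Q$. The standard trick is to work with the metric induced by a length function that is compatible with the exact sequence, so that the quotient map $G \to Q$ is $1$-Lipschitz and a section can be chosen with linearly bounded distortion on balls. Since all of this is carried out carefully in \cite[Sections 13 \& 14]{bell_dran}, my proof would simply reference that source for the technical lemma on controlled sections and then quote the three statements verbatim.
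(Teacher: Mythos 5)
The paper offers no proof of this lemma; it is stated as a citation to Bell--Dranishnikov \cite[Sections 13 \& 14]{bell_dran}, so there is no in-paper argument to compare against. Your outline correctly identifies why each clause follows: (1) and (2) are instances of coarse invariance of $\asdim$ (finite-index inclusion and finite-kernel projection are both coarse equivalences once the groups carry proper left-invariant metrics), and (3) is the Hurewicz-type extension theorem of Bell--Dranishnikov. Your description of the multiplicative-then-refined-to-additive multiplicity bound is an accurate characterization of the key content of (3), and your deference to the cited source for the section-control lemma is exactly where the paper also stops. One small imprecision: in (2) the uniform bound on fibre diameters is simply $\diam_{d_G}(F)$ (finite since $F$ is finite and $d_G$ is proper, and invariant under left translation), not ``$|F|$ times the diameter of a generating set''; this does not affect the argument.
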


\begin{lem}\label{lem:perm.con.asdim}
Let $G$ be a countable group.
\begin{enumerate}
\item Let $G$ satisfy Conjecture~\ref{conjdsf89023}. Then every subgroup of $G$ satisfies Conjecture~\ref{conjdsf89023}.
\item Let $G = \Asterisk_i H_i$ be a free product of groups satisfying Conjecture~\ref{conjdsf89023}. 
Then the group $G$ also satisfies Conjecture~\ref{conjdsf89023}.
\item Let $1 \to K \to G \xrightarrow{\zeta} Q \to 1$ be an exact sequence and let the groups $K$ and $Q$ satisfy Conjecture~\ref{conjdsf89023}.
Then $G$ also satisfies Conjecture~\ref{conjdsf89023}.
\end{enumerate}
\end{lem}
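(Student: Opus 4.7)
The plan is to handle the three parts by analysing centralizers with elementary group-theoretic diagrams and applying Lemma~\ref{lem:asdim}, which furnishes both monotonicity under subgroups and an extension inequality for asymptotic dimension. Part (1) is immediate: for a subgroup $H \le G$ and $h \in H$ one has $C_h(H) = H \cap C_h(G)$, which induces an embedding $C_h(H)/\langle h\rangle \hookrightarrow C_h(G)/\langle h\rangle$; the latter has finite asymptotic dimension by the hypothesis on $G$, and asymptotic dimension passes to subgroups.

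For part (2) I would first invoke the theorem of Bell--Dranishnikov on the asymptotic dimension of free products to obtain $\asdim(G) < \infty$ from the finiteness of each $\asdim(H_i)$. The centralizer analysis then splits via Bass--Serre theory: every $g \in G$ of infinite order is either elliptic (conjugate to some $h \in H_i$) or hyperbolic on the Bass--Serre tree. In the elliptic case, $C_G(g)$ is the corresponding conjugate of $C_{H_i}(h)$, so $C_G(g)/\langle g\rangle \cong C_{H_i}(h)/\langle h\rangle$ is covered by the hypothesis on $H_i$. In the hyperbolic case, $C_G(g)$ coincides with the (infinite cyclic) stabiliser of the axis of $g$, so $C_G(g)/\langle g\rangle$ is finite cyclic and of asymptotic dimension $0$.

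Part (3) is the technically most substantial. First, $\asdim(G) < \infty$ by Lemma~\ref{lem:asdim}(3). For $g \in G$ of infinite order set $\bar g = \zeta(g)$ and start from
\[1 \to C_g(G) \cap K \to C_g(G) \xrightarrow{\zeta} \zeta(C_g(G)) \to 1,\]
observing that $\zeta(C_g(G)) \le C_{\bar g}(Q)$. If $\bar g$ has infinite order in $Q$, then $\langle g\rangle \cap K = 1$, and quotienting the middle and right-hand terms by $\langle g\rangle$ and $\langle \bar g\rangle$ respectively produces an extension with kernel $C_g(G) \cap K \le K$ and quotient $\zeta(C_g(G))/\langle \bar g\rangle \le C_{\bar g}(Q)/\langle \bar g\rangle$, both of finite asymptotic dimension by the hypotheses on $K$ and $Q$, so Lemma~\ref{lem:asdim}(3) concludes. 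If $\bar g$ has finite order $n$, then $g^n \in K$ is still of infinite order and $\langle g\rangle \cap K = \langle g^n\rangle$; quotienting the above sequence by $\langle g^n\rangle$ yields an extension whose kernel embeds into $C_{g^n}(K)/\langle g^n\rangle$ and whose quotient is a subgroup of $Q$, so $C_g(G)/\langle g^n\rangle$ has finite asymptotic dimension, and finally passing to the quotient by the finite normal subgroup $\langle g\rangle/\langle g^n\rangle \cong \IZ/n\IZ$ yields the claim via Lemma~\ref{lem:asdim}(2).

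The main obstacle is the case of $\bar g$ of finite order in part (3): one must replace $g$ by the power $g^n$ that lies in $K$ before the hypothesis on $K$ becomes applicable, and then recover the desired statement about $C_g(G)/\langle g\rangle$ via an auxiliary finite quotient. Once this diagram-chasing is set up, the rest follows mechanically from the extension and finite-kernel properties of asymptotic dimension collected in Lemma~\ref{lem:asdim}.
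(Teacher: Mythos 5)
Your proof is correct and takes essentially the same approach as the paper: Part~(1) is the immediate intersection argument, Part~(2) is the free-product centralizer dichotomy (you phrase it via Bass--Serre theory, the paper cites the canonical form theorem, but these are the same fact), and Part~(3) is the same two-case extension argument driven by Lemma~\ref{lem:asdim}. The only organizational difference in Part~(3) is that you quotient $C_g(G)$ by $\langle g^n\rangle$ directly and embed the kernel into $C_{g^n}(K)/\langle g^n\rangle$, whereas the paper first passes to $C_G(g^k)$, builds the extension there, and at the end recovers $C_G(g)/\langle g\rangle$ as a subgroup; both routes use the same three ingredients (subgroup monotonicity, extension bound, finite-kernel invariance) in the same way.
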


\begin{proof}
Showing \textit{Claim 1} is straightforward and \textit{Claim 2} follows directly from the canonical form theorem for elements of free products. 

So let us prove \textit{Claim 3}. Notice that by the assumption the asymptotic dimension of $K$ and $Q$ is finite. Let $g \in G$. There are two cases.

\item[Case 1: $g^k \in K$ for some $k \in \mathbb{Z}\setminus\{0\}$.] 
It is easy to see that the centralizer $C_K(g^k)$ is normal in the centralizer $C_G(g^k)$. Thus we have the short exact sequence
$$
1 \to C_K(g^k) \to C_G(g^k) \to C_G(g^k)/C_K(g^k) \to 1,
$$
which immediately leads to
$$
1 \to C_K(g^k)/\langle g^k \rangle \to C_G(g^k)/\langle g^k \rangle \to C_G(g^k)/C_K(g^k) \to 1.
$$
 
By assumption, the quotient $C_K(g^k)/\langle g^k \rangle$ has finite asymptotic dimension. 
Moreover, $C_K(g^k) = C_G(g^k) \cap K$, thus $C_G(g^k)/C_K(g^k) = C_G(g^k)/(C_G(g^k) \cap K) < Q$.
Using now Lemma~\ref{lem:asdim} twice, we get that $C_G(g^k)/C_K(g^k)$ has finite asymptotic dimension, and so has $C_G(g^k)/\langle g^k \rangle$.
The group $C_G(g^k)/\langle g \rangle$ is a quotient of $C_G(g^k)/\langle g^k \rangle$ with a finite kernel, thus by Lemma~\ref{lem:asdim} we get $\asdim(C_G(g^k)/\langle g \rangle) < \infty$.
By the fact that $C_G(g) < C_G(g^k)$ and again by Lemma~\ref{lem:asdim}, we have that $\asdim(C_G(g)/\langle g \rangle) < \infty$.

\item[Case 2: $g^k \notin K$ for all $k \in \mathbb{Z}\backslash\{0\}$.]
We have the following exact sequence:
$$
C_K(g) \to C_G(g) \xrightarrow{\zeta} C_Q(\zeta(q)).
$$
Note that $\zeta$ is not necessarily onto. Although $g$ is not in $K$, we can still consider the group $C_K(g) := C_G(g) \cap K$ of all elements in $K$ commuting with $g$. 
This exact sequence leads to the following one:
$$
C_K(g)/(\langle g \rangle \cap C_K(g)) \to C_G(g)/\langle g \rangle \xrightarrow{\zeta^\prime} C_Q(\zeta(g))/\langle \zeta(g) \rangle.
$$

Note that by the assumption $\langle g \rangle \cap C_K(g) = 1$. 
Thus $C_G(g) / \langle g \rangle$ is an extension of $C_K(g)$ and $\image(\zeta^\prime)$, which both have finite asymptotic dimension.
\end{proof}

\section{Groups (conjecturally) of finite asymptotic dimension}
\label{knjdfs923}

In this section we are going to varify Conjectures~\ref{conj:njdsf89}, \ref{conjdsf89023} and the Burghelea conjecture (which is by Corollary~\ref{cor:sd9323} a consequence of the former two conjectures) for almost all classes of groups known to have finite asymptotic dimension and for some where this is currently only conjectured.

Note that for many of these classes either one of the conjectures is already known (or both, like in the case of hyperbolic groups). But since the techniques we employ to prove the unknown conjectures also prove the known statements, we give unified proofs.

\begin{defn}[{\cite{chadha_passi}}]
The class $E(\IQ)$ consists of groups $G$ satisfying $\hd_{\IQ}(G) < \infty$ and $\hd_{\IQ}(C_g/\langle g\rangle) < \infty$ for every element $g \in G$ of infinite order.
\qed
\end{defn}

It is obvious that groups from the class $E(\IQ)$ satisfy the Burghelea conjecture, and Conjecture~\ref{conj:njdsf89}.\ref{12342dsf} and \ref{conjdsf89023} together imply that groups of finite asymptotic dimension are in $E(\IQ)$.

The arguments Eckmann used to prove the results cited above in Remark~\ref{jkdsf9023} actually show that the classes of groups he considered lie in $E(\IQ)$. The cited results of Ji in that remark have been actually also proved by Ji in the stronger form that the classes of groups he considered lie in $E(\IQ)$, where he uses in his paper the notation $\mathcal{C}$ for this class.

The reason why it is better to know that a class of groups is contained in $E(\IQ)$ than just varifying the Burghelea conjecture for this class is because Chadha and Passi \cite{chadha_passi} showed that $E(\IQ)$ is closed under taking subgroups, extensions, filtered colimits provided the rational homological dimensions of the groups in the system are uniformly bounded from above, and free products. Ji \cite[Theorem 4.3(5)]{ji_nilpotency} extended these closure properties to amalgamated free products and HNN extensions.

\begin{rem}
Note that Conjecture~\ref{conj:njdsf89}.\ref{12342dsf} and \ref{conjdsf89023} imply that if we have a group $G$ of finite asymptotic dimension, then $C_g$ and $C_g/\langle g\rangle$ also lie in the class $E(\IQ)$. We will not persue this implication in this paper, i.e., we will not check it for the classes of groups we consider in this paper.
\qed
\end{rem}

\subsection{Non-positively curved groups}

In this section we will discuss $\CAT(0)$ groups, systolic groups and relatively hyperbolic groups. Note that for hyperbolic groups, which are of course also of non-positive curvature, all conjectures that we pursue in this paper are already known, which is the reason why we do not discuss hyperbolic groups here.

\subsubsection{\texorpdfstring{$\CAT(0)$}{CAT(0)} groups}
\label{sec8923jknd}

\begin{defn}
We call an isometric action of a group on a metric space geometric if it is proper and co-compact.

We call a group $\CAT(0)$ if it acts geometrically on some $\CAT(0)$ space (we do not assume the space to be finite-dimensional).
\qed
\end{defn}

\begin{thm}
Torsion-free $\CAT(0)$ groups have finite homological dimension.

$\CAT(0)$ groups acting on a finite-dimensional $\CAT(0)$ space and virtually torsion-free $\CAT(0)$ groups have finite rational homological dimension.
\end{thm}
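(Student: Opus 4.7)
The plan is to treat the three assertions in an order that highlights the main obstruction: I would handle the two cases yielding finite \emph{rational} homological dimension first, and save the torsion-free case for last. All arguments rest on the Bruhat--Tits (Cartan) fixed-point theorem: every finite group of isometries of a complete $\CAT(0)$ space has a global fixed point.

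For a $\CAT(0)$ group $G$ acting geometrically on a finite-dimensional $\CAT(0)$ space $X$, the fixed-point theorem shows that every finite subgroup of $G$ stabilizes a point of $X$; together with properness of the action this exhibits $X$ as a model for $\EGunder$. Hence $\gdunder(G) \le \dim X < \infty$, and Lemma~\ref{sdf23fdjn23} gives $\hd_\IQ(G) < \infty$.

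For a virtually torsion-free $\CAT(0)$ group $G$, I would pick a torsion-free subgroup $H \le G$ of finite index; the geometric $G$-action on its $\CAT(0)$ space $X$ restricts to a geometric $H$-action, so $H$ is itself a torsion-free $\CAT(0)$ group. Granting the torsion-free assertion of the theorem, $\hd(H) < \infty$ and in particular $\hd_\IQ(H) < \infty$, so Lemma~\ref{lemjsd9823ds} yields $\hd_\IQ(G) \le \hd_\IQ(H) + 1 < \infty$.

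For a torsion-free $\CAT(0)$ group $G$, the fixed-point theorem combined with torsion-freeness forces the action on $X$ to be free, and contractibility of $X$ (geodesic retraction to any basepoint) makes $X/G$ a compact $K(G,1)$. If $X$ is finite-dimensional the argument from the first case already gives $\cd(G) \le \dim X$, and then $\hd(G) \le \cd(G) < \infty$ by Lemma~\ref{lem:efs000}. When $X$ is a priori infinite-dimensional, one invokes the structure theory of cocompact $\CAT(0)$ group actions (cf.\ Bridson--Haefliger, Chapter II.5) to replace $X/G$ by a finite-dimensional $K(G,1)$ model and concludes in the same way. This last step is the main obstacle: a direct elementary argument for finiteness of the homological dimension when the ambient $\CAT(0)$ space is infinite-dimensional is not immediate, and some replacement by a finite-dimensional model must be supplied.
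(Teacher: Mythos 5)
Your treatment of the first two assertions matches the paper exactly: $X$ is a model for $\EGunder$ and hence a finite-dimensional $X$ gives $\gdunder(G)<\infty$; for virtually torsion-free $G$ one passes to a torsion-free finite-index subgroup $H$, which is again $\CAT(0)$, and applies Lemma~\ref{lemjsd9823ds}. The first part of your torsion-free case (finite-dimensional $X$) is also the same as the paper's.

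The gap you flag at the end, however, is a genuine gap, and the reference you offer does not close it. There is no ``structure theory of cocompact $\CAT(0)$ group actions'' in Bridson--Haefliger (Chapter~II.5 or elsewhere) that produces a finite-dimensional $K(G,1)$ from a geometric action on a possibly infinite-dimensional $\CAT(0)$ space; this is precisely why the question of whether every $\CAT(0)$ group acts on a finite-dimensional $\CAT(0)$ space remains open. The paper fills this hole with a quite different and non-elementary device, borrowed from Moran: since the action is free, proper and cocompact and $X$ is contractible, the quotient $X/G$ is a \emph{compact ANR} model for $BG$; by West's theorem, every compact ANR is homotopy equivalent to a \emph{finite} CW complex, so $G$ admits a finite $K(G,1)$ and in particular has finite (co)homological dimension. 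Without invoking West's theorem (or some equivalent topological input about compact ANRs), your argument does not establish the torsion-free assertion when $X$ is a priori infinite-dimensional, and since your proof of the virtually-torsion-free case is conditional on the torsion-free one, that case is likewise left incomplete.
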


\begin{proof}
Let $X$ be the $\CAT(0)$ space on which $G$ acts geometrically. Note that $X$, being a $\CAT(0)$ space, is a model for $\EGunder$ \cite[Theorem 4.6]{lueck_survey}. In the case that $G$ is torsion-free, properness of the action of $G$ implies freeness, and therefore $X$ is a model for $EG$.

So if $X$ is finite-dimensional we are done: either $G$ has torsion and we apply Lemma~\ref{sdf23fdjn23}, or $G$ is torsion-free and we apply the second part of Lemma~\ref{jnsdu23}.

If $X$ is not finite-dimensional, but the group $G$ is torsion-free, we have to use a trick that we learned from a paper of Moran \cite[Proof of Theorem B]{moran}. The quotient $X/G$ is a compact ANR, i.e., for every embedding $i\colon X/G \to Y$ into a metrizable topological space Y, there exists a neighborhood $U$ of $i(X)$ in $Y$ and a retraction of $U$ onto $i(X)$. Hence, by a result of West \cite[Corollary 5.3]{west}, it is homotopy-equivalent to a finite complex $K$. Since $X/G$ is a model for $BG$, $K$ is also a model for $BG$ which is now finite.

If $H$ is a finite index subgroup of $G$, then $H$ is also a $\CAT(0)$ group because it acts geometrically on the same space $X$ on which $G$ acts. So if $H$ is torsion-free, then it has finite homological dimension by the above paragraph. Applying Lemma~\ref{lemjsd9823ds} we conclude that the rational homological dimension of $G$ is finite (by \cite[Theorem III.$\Gamma$.1.1]{bridson_haefliger} we know that $\CAT(0)$ groups are finitely presented and therefore countable).
\end{proof}

\begin{rem}
The cases in the above theorem a priori do not cover all possible cases: it might be that there exists a $\CAT(0)$ group which does not act on any finite-dimensional $\CAT(0)$ space and which is not virtually torsion-free.

It is currently an open problem whether every $\CAT(0)$ group acts on a finite-dimensional $\CAT(0)$ space.
\qed
\end{rem}

\begin{rem}
It is an open problem whether $\CAT(0)$ groups have finite asymptotic dimension. The best result so far is by Wright \cite{wright_cube} who proved that groups acting on a finite-dimensional $\CAT(0)$ cube complex have finite asymptotic dimension.
\qed
\end{rem}

\begin{prop}
Let $G$ be a $\CAT(0)$ group and let $g \in G$ have infinite order.

Then the reduced centralizer $C_g/\langle g \rangle$ virtually embeds as a subgroup into $C_g$.
\end{prop}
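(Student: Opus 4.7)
The plan is to reduce the statement to the standard structural result of Bridson--Haefliger on centralizers of infinite-order elements in $\CAT(0)$ groups: if $\Gamma$ acts properly and cocompactly by isometries on a $\CAT(0)$ space and $\gamma \in \Gamma$ has infinite order, then there exist an integer $n \ge 1$ and a finite-index subgroup $U \le Z_\Gamma(\gamma)$ that splits as an internal direct product $U = \langle \gamma^n \rangle \times H$ for some subgroup $H \le U$. This is a standard consequence of the Flat Torus Theorem \cite[II.7.1 and the surrounding corollaries]{bridson_haefliger}, applied to the $\CAT(0)$ space $X$ on which $G$ acts geometrically. The idea is then to use the complementary factor $H$ as the source of the virtual embedding of $C_g/\langle g\rangle$ into $C_g$.

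Concretely, I would apply the cited result to $g \in G$ to obtain $n \ge 1$ and $U = \langle g^n\rangle \times H$ of finite index in $C_g$, and then study the composition
\[\varphi\colon H \hookrightarrow U \hookrightarrow C_g \twoheadrightarrow C_g/\langle g\rangle.\]
The core assertion is that $\varphi$ is injective with image of finite index, which gives exactly the desired virtual embedding of $C_g/\langle g\rangle$ as a subgroup of $C_g$, namely $H$.

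For the injectivity step, any element of $\kernel(\varphi) = H \cap \langle g\rangle$ has the form $h = g^j$ with $j \in \IZ$; raising to the $n$-th power gives $h^n = g^{jn} \in H \cap \langle g^n\rangle$, which is trivial because $H$ and $\langle g^n\rangle$ are direct factors of $U$, and the infinite order of $g$ then forces $j = 0$. For the finite-index step, $H$ centralizes $g$ by construction, so $H \cdot \langle g\rangle \supseteq H \cdot \langle g^n\rangle = U$, and since $U$ has finite index in $C_g$ the image $\image(\varphi) = (H \cdot \langle g\rangle)/\langle g\rangle$ has finite index in $C_g/\langle g\rangle$.

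The one genuinely nontrivial ingredient is the virtual direct-product decomposition $U = \langle g^n\rangle \times H$ of (a finite-index subgroup of) $C_g$, and that is the only real obstacle in the argument; it is classical and should simply be cited. Everything else reduces to elementary bookkeeping with indices in the two short computations above.
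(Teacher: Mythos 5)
Your overall route is the same as the paper's: reduce to a virtual direct-product decomposition of $C_g$ and then do elementary index bookkeeping. The bookkeeping in your injectivity and finite-index steps is correct, and the detour through a power $g^n$ is harmless (the paper gets the splitting with $A = \langle g\rangle$ directly from \cite[Theorem II.6.12]{bridson_haefliger}, so $n = 1$ suffices, but your argument works either way).

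There is, however, a genuine gap that you gloss over by declaring the decomposition ``classical and should simply be cited.'' Both the Flat Torus Theorem corollary you have in mind \cite[Theorem II.7.1(5)]{bridson_haefliger} and the result the paper actually uses \cite[Theorem II.6.12]{bridson_haefliger} have a finite-generation hypothesis: the direct-factor conclusion is only asserted for a \emph{finitely generated} group normalizing/centralizing $A$. Without finite generation one only gets that a finite-index subgroup of $C_g$ centralizes $A$, which is trivial here since $C_g$ already is the centralizer. So to invoke the splitting you must first know that $C_g$ is finitely generated, and that is not automatic for a subgroup of a $\CAT(0)$ group. The paper devotes a paragraph precisely to this: it uses that $\CAT(0)$ groups are semi-hyperbolic \cite[Corollary III.$\Gamma$.4.8]{bridson_haefliger}, that centralizers in semi-hyperbolic groups are quasi-convex \cite[Proposition III.$\Gamma$.4.14]{bridson_haefliger}, and that quasi-convex subgroups are finitely generated \cite[Proposition III.$\Gamma$.4.12]{bridson_haefliger}. (The same quasi-isometric embedding is then reused to see that $g$ acts as a hyperbolic isometry, which is the other hypothesis of II.6.12/II.7.1; in your setup this can also be deduced more directly from properness and the absence of parabolics \cite[Proposition II.6.10]{bridson_haefliger}, but it still needs to be said.) Your proof should include this finite-generation step or at least cite it explicitly; as written, the ``one genuinely nontrivial ingredient'' actually contains a second nontrivial ingredient that you have not supplied.
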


\begin{proof}
Let us first show that the centralizer $C_g$ is finitely generated since we will need this later. By \cite[Corollary III.$\Gamma$.4.8]{bridson_haefliger} we know that $\CAT(0)$ groups are semi-hyperbolic. By \cite[Proposition III.$\Gamma$.4.14]{bridson_haefliger} centralizers are quasi-convex (note that the assumption on finite generation is satisfied since $G$ is a $\CAT(0)$ group \cite[Theorem III.$\Gamma$.1.1]{bridson_haefliger}), and so by \cite[Proposition III.$\Gamma$.4.12]{bridson_haefliger} centralizers are finitely generated, quasi-isometrically embedded in $G$ and semi-hyperbolic (by \cite[Theorem III.$\Gamma$.4.9(1)]{bridson_haefliger} the centralizers are even finitely presented).

So $C_g$ is a finitely generated group acting by isometries on some $\CAT(0)$ space $X$, and $\langle g \rangle \cong \IZ$ is central in $C_g$ and acts freely. Recall \cite[Definition II.6.3]{bridson_haefliger} that an isometry of a metric space is either elliptic, hyperbolic or parabolic. By \cite[Proposition II.6.10]{bridson_haefliger} no element of $G$ can act parabolically. We know from the previous paragraph that $C_g$ is quasi-isometrically embedded in $G$ which is in turn quasi-isometric to $X$. So the orbits of $\langle g \rangle$ in $X$ are quasi-isometric to $\IZ$ and we can conclude by \cite[Theorem II.6.8(1)]{bridson_haefliger} that each non-trivial element of $\langle g \rangle$ acts hyperbolically. By \cite[Theorem II.6.12]{bridson_haefliger} we conclude that there exists a subgroup $H < C_g$ of finite index that contains $\langle g \rangle$ as a direct summand, i.e., $H \cong \langle g \rangle \oplus H/\langle g \rangle$. So $C_g / \langle g \rangle$ contains the finite index subgroup $H / \langle g \rangle$ which embeds as a subgroup into $H < C_g$.
\end{proof}

\begin{cor}
Let $G$ be a $\CAT(0)$ group.

If $\hd_\IQ(G) < \infty$, then $G \in E(\IQ)$, and if $\asdim(G) < \infty$, then $\asdim(C_g / \langle g \rangle) < \infty$.
\end{cor}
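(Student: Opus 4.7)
The plan is to read the corollary directly off the preceding proposition, which provides for any infinite-order $g \in G$ a finite-index subgroup $H / \langle g \rangle$ of the reduced centralizer $C_g / \langle g \rangle$ that embeds as an abstract subgroup of $C_g$, and hence of $G$. Both claims will then be translated through the standard monotonicity and finite-index permanence results for $\hd_\IQ$ and $\asdim$, which have already been collected in Lemmata~\ref{lem:efs000}, \ref{lemjsd9823ds} and \ref{lem:asdim}. Note that $G$ is finitely presented as a $\CAT(0)$ group, so all the groups appearing are countable, which is the hypothesis needed for those lemmata.

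For the first claim, assume $\hd_\IQ(G) < \infty$. By Lemma~\ref{lem:efs000} applied to $C_g < G$ we immediately get $\hd_\IQ(C_g) \le \hd_\IQ(G) < \infty$. The same lemma applied to the embedded subgroup $H / \langle g \rangle < G$ gives $\hd_\IQ(H / \langle g \rangle) < \infty$. Since $H / \langle g \rangle$ is a finite-index subgroup of $C_g / \langle g \rangle$, Lemma~\ref{lemjsd9823ds} yields
\[
\hd_\IQ(C_g / \langle g \rangle) \le \hd_\IQ(H / \langle g \rangle) + 1 < \infty.
\]
This is precisely what is needed to conclude $G \in E(\IQ)$.

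For the second claim, assume $\asdim(G) < \infty$. Coarse invariance of asymptotic dimension (Lemma~\ref{lem:asdim}), together with monotonicity under the subgroup inclusion $H / \langle g \rangle \hookrightarrow G$, yields $\asdim(H / \langle g \rangle) \le \asdim(G) < \infty$. Since $H / \langle g \rangle$ has finite index in $C_g / \langle g \rangle$, part~(1) of Lemma~\ref{lem:asdim} gives $\asdim(C_g / \langle g \rangle) = \asdim(H / \langle g \rangle) < \infty$.

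There is no real obstacle here: the proposition preceding the corollary does all the geometric work (translating centrality of $\langle g \rangle$ in $C_g$ and the hyperbolic character of $g$ into a virtual direct-factor splitting), and the corollary is just a matter of threading the resulting embedding through the appropriate permanence lemmas for homological and asymptotic dimension.
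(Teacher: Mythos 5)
Your proof is correct and follows the exact route the paper intends: read off the finite-index embedding $H/\langle g\rangle \hookrightarrow C_g$ from the preceding proposition and thread it through the subgroup-monotonicity and finite-index permanence lemmata for $\hd_\IQ$ and $\asdim$. The paper gives no explicit proof for this corollary precisely because it is this immediate; the only cosmetic remark is that subgroup monotonicity of $\asdim$ is not literally one of the enumerated items in Lemma~\ref{lem:asdim}, though it is standard and used tacitly throughout the paper.
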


\begin{cor}
Coxeter groups and right-angled Artin groups are in $E(\IQ)$.
\end{cor}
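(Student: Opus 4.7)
The plan is to apply the preceding corollary, so I need to exhibit each class as a $\CAT(0)$ group of finite rational homological dimension. The two verifications are parallel and both reduce to producing a suitable geometric action on a finite-dimensional $\CAT(0)$ space (and, for Coxeter groups, invoking virtual torsion-freeness).

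For a right-angled Artin group $A_\Gamma$ associated to a finite simplicial graph $\Gamma$, I would use the Salvetti complex as the canonical model. Its universal cover is a finite-dimensional $\CAT(0)$ cube complex on which $A_\Gamma$ acts geometrically (properly and cocompactly), and moreover $A_\Gamma$ is torsion-free. Hence $A_\Gamma$ is a torsion-free $\CAT(0)$ group acting on a finite-dimensional $\CAT(0)$ space, so the preceding theorem gives $\hd_\IQ(A_\Gamma) \le \hd(A_\Gamma) < \infty$. The preceding corollary then yields $A_\Gamma \in E(\IQ)$.

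For a Coxeter group $W$, I would invoke the Davis complex $\Sigma_W$, which is a finite-dimensional $\CAT(0)$ cube-like (or piecewise Euclidean) complex on which $W$ acts geometrically; this exhibits $W$ as a $\CAT(0)$ group. Coxeter groups are moreover known to be linear over $\IR$ (Tits' geometric representation), so by Selberg's lemma they are virtually torsion-free. The preceding theorem therefore provides $\hd_\IQ(W) < \infty$, and the preceding corollary gives $W \in E(\IQ)$.

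The main (and really only) obstacle is that the corollary is stated for arbitrary $\CAT(0)$ groups, so I must keep track that the hypothesis ``finite rational homological dimension'' is actually satisfied in each case; this is precisely why the finite-dimensionality of the Salvetti complex and the virtual torsion-freeness of Coxeter groups are invoked. No additional computation of centralizers is required, since the corollary already handles the centralizer quotients $C_g/\langle g\rangle$ in general $\CAT(0)$ groups.
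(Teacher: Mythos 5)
Your proof is correct and follows essentially the same route as the paper: exhibit both classes as $\CAT(0)$ groups acting geometrically on finite-dimensional complexes (Salvetti and Davis), invoke the preceding theorem to get finite rational homological dimension, then apply the preceding corollary. One small remark: for Coxeter groups the paper relies only on the finite-dimensionality of the Davis complex, which already triggers the relevant branch of the theorem; your detour through Tits' linear representation and Selberg's lemma to get virtual torsion-freeness is a valid alternative branch of the same theorem but is not needed.
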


\begin{proof}
A Coxeter group acts on its Davis complex and a right-angled Artin group acts on the universal cover of its Salvetti complex. Both actions are geometric and both the Davis and the Salvetti complexes are finite-dimensional $\CAT(0)$ complexes (the Salvetti complex is even a cube complex). 
\end{proof}

\subsubsection{Systolic groups}

\begin{defn}[{\cite[Section 2.2]{osaj_pru}}]
A systolic groups is a group which acts geometrically (in this case this means properly, cocompactly and by simplicial automorphisms) on a systolic complex. The latter is a finite-dimensional, uniformly locally finite, connected and simply-connected simplicial complex in which the link of every simplex is $6$-large.
\qed
\end{defn}

\begin{thm}
Systolic groups have finite rational homological dimension.

Torsion-free systolic groups have finite homological dimension.
\end{thm}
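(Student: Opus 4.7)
The plan is to mimic the finite-dimensional branch of the $\CAT(0)$ argument from Section~\ref{sec8923jknd}. Let $X$ be a systolic complex on which $G$ acts geometrically. By a theorem of Januszkiewicz--\'Swi\k{a}tkowski, every simply-connected systolic complex is contractible. Combined with the fact that $X$ is finite-dimensional and uniformly locally finite (built into the definition), and that the simplicial $G$-action is proper (so all simplex stabilizers are finite), this makes $X$ a finite-dimensional model for the classifying space $\EGunder$ for proper actions of $G$.

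For the first statement, Lemma~\ref{sdf23fdjn23} then gives immediately
\[
\hd_\IQ(G) \le \gdunder(G) \le \dim(X) < \infty.
\]
For the second statement, assume additionally that $G$ is torsion-free. Then each simplex stabilizer, being simultaneously finite and torsion-free, is trivial, so the action on $X$ is free. Hence $X$ is a finite-dimensional model for $EG$, giving $\gd(G) \le \dim(X)$. Combining Lemma~\ref{jnsdu23} (which yields $\cd(G) \le \gd(G)$ in all cases, including the exceptional $\cd(G) = 2$, $\gd(G) = 3$) with Lemma~\ref{lem:efs000} (which yields $\hd(G) \le \cd(G)$) we conclude
\[
\hd(G) \le \cd(G) \le \gd(G) \le \dim(X) < \infty.
\]

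The only substantive external input is the contractibility of systolic complexes, which plays for systolic groups the same role that contractibility of $\CAT(0)$ spaces plays for $\CAT(0)$ groups. There is therefore no real obstacle to the proof; in particular, since finite-dimensionality is part of the definition of a systolic complex, we avoid the infinite-dimensional pathology that necessitated the West ANR trick in the $\CAT(0)$ setting. The argument would furthermore establish the slightly stronger statement $\gdunder(G) < \infty$ for any systolic $G$.
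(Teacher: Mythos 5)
Your strategy is the same as the paper's: exhibit a finite-dimensional model for $\EGunder$ (resp.\ $EG$ in the torsion-free case) and invoke Lemma~\ref{sdf23fdjn23} (resp.\ Lemma~\ref{jnsdu23} with Lemma~\ref{lem:efs000}). The torsion-free half of your argument is complete: a proper action of a torsion-free group is free, and a free cocompact simplicial action on a finite-dimensional contractible complex gives a finite-dimensional $EG$, so $\hd(G) \le \cd(G) \le \gd(G) \le \dim X$.

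However, the reduction in the general case has a genuine gap. A proper $G$-CW complex $X$ is a model for $\EGunder$ only if, for \emph{every} finite subgroup $H \le G$, the fixed-point set $X^H$ is contractible (in particular non-empty). Contractibility of $X$ itself only handles $H = \{e\}$; finiteness of the stabilizers and local finiteness say nothing about the higher fixed-point sets. So ``contractible + proper + finite-dimensional'' does not by itself yield $\EGunder$. The fact that systolic complexes do satisfy the fixed-point condition is a nontrivial theorem; the paper cites Chepoi--Osajda (Theorem E, with the finite-dimensionality of $\EGunder$ due originally to Przytycki), whereas your write-up supplies no justification for this step. You should either cite that result explicitly, or bypass $\EGunder$ altogether: since each $p$-chain module of $X$ is a direct sum of $\IQ[G/G_\sigma]$ with $G_\sigma$ finite, and $\IQ[G/G_\sigma]$ is a projective $\IQ G$-module (it is $\IQ G \cdot e$ for the idempotent $e = \tfrac{1}{|G_\sigma|}\sum_{h\in G_\sigma} h$), the simplicial chain complex of $X$ over $\IQ$ is already a length-$\dim X$ projective resolution of $\IQ$ over $\IQ G$, giving $\cd_\IQ(G) \le \dim X$ directly without any claim about $\EGunder$.
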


\begin{proof}
Chepoi--Osajda \cite[Theorem E]{chepoi_osaj} showed that the systolic complex on which a systolic group $G$ acts is a model for $\EGunder$. Since we assume our systolic complexes to be finite-dimensional, we get $\gdunder(G) < \infty$ and Lemma~\ref{sdf23fdjn23} gives then $\hd_\IQ(G) < \infty$.

(That systolic groups admit finite-dimensional models for $\EGunder$ was first proven by Przytycki \cite{przytycki}.)

If $G$ is torsion-free, then $\EGunder = EG$ and we get $\hd(G) < \infty$.
\end{proof}

\begin{rem}
It is currently not known whether systolic groups have finite asymptotic dimension (but it is conjectured to be so).
\qed
\end{rem}

\begin{thm}
Let $G$ be a systolic groups. Then $G \in E(\IQ)$ and therefore $G$ satisfies the Burghela conjecture.

Furthermore, $\asdim(C_g / \langle g \rangle) < \infty$ for every infinite order element $g \in G$, regardless of whether $G$ itself has finite asymptotic dimension.
\end{thm}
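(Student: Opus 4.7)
The plan is to adapt the strategy used in Section~\ref{sec8923jknd} for $\CAT(0)$ groups, since $\hd_\IQ(G) < \infty$ has already been handled in the preceding theorem. What remains is to analyze the structure of $C_g/\langle g \rangle$ for an infinite order element $g$, showing both that it has finite rational homological dimension and that it has finite asymptotic dimension; these two pieces together with the preceding theorem imply $G \in E(\IQ)$ (hence the Burghelea conjecture) and give the stated $\asdim$ bound.

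The first step is to invoke the classification of isometries of systolic complexes due to Elsner, together with the structural results of Osajda and collaborators on centralizers of infinite order elements in systolic groups. Namely, an infinite order element $g$ of a systolic group acting on a systolic complex $X$ acts hyperbolically, with an invariant quasi-axis, and the centralizer $C_g$ preserves this axis. The structural input I need is a virtual splitting: there is a finite-index subgroup $H < C_g$ and an integer $k \ge 1$ such that
\[
H \;\cong\; \langle g^k \rangle \times H',
\]
where the transverse factor $H'$ is itself a hyperbolic group (as in the $\CAT(0)$ case, it plays the role of the stabilizer of the axis modulo its translation subgroup). This is the systolic analogue of what \cite[Theorem II.6.12]{bridson_haefliger} supplies in the $\CAT(0)$ argument.

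Granting this splitting, the remainder of the proof is a routine assembly of the permanence lemmas. Because $H'$ is hyperbolic, it has finite rational homological dimension (apply Lemma~\ref{sdf23fdjn23} to a finite model of $\EGunder$ for a hyperbolic group) and finite asymptotic dimension (by Roe's theorem). Now $H/\langle g^k\rangle \cong H'$ sits as a finite-index subgroup in $C_g/\langle g^k\rangle$, and $C_g/\langle g^k\rangle$ surjects onto $C_g/\langle g\rangle$ with finite kernel $\IZ/k\IZ$. Applying Lemma~\ref{lem:hd.eq.seq}, Lemma~\ref{lemjsd9823ds} and Lemma~\ref{lem:powers} transports $\hd_\IQ < \infty$ from $H'$ to $C_g/\langle g\rangle$; applying Lemma~\ref{lem:asdim} transports $\asdim < \infty$ in the same way. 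Combined with $\hd_\IQ(G) < \infty$ from the preceding theorem, this shows $G \in E(\IQ)$, which yields the Burghelea conjecture; the $\asdim$ statement is obtained in parallel and does not require $\asdim(G) < \infty$.

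The main obstacle is pinning down the correct form of the centralizer structure theorem for systolic groups, in particular the virtual direct-product splitting with a hyperbolic transverse factor. Once this structural input is imported from the literature on systolic geometry, everything else is a mechanical application of the dimension and asymptotic-dimension lemmas already developed above, exactly paralleling the $\CAT(0)$ argument.
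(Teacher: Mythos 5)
Your plan is essentially the paper's proof: both rest on a structure theorem for centralizers of infinite-order elements in systolic groups, which you correctly attribute to Osajda and collaborators without pinning down the exact statement. The paper cites Osajda--Prytu\l{}a \cite[Theorem B]{osaj_pru}, which says that such a centralizer $C_g$ is commensurable with $F_n \times \IZ$ for some $n \ge 0$; from this one reads off directly that $C_g/\langle g\rangle$ is virtually free, so $\hd_\IQ(C_g/\langle g\rangle) \le 1$ and $\asdim(C_g/\langle g\rangle) \le 1$. Your postulated virtual splitting $H \cong \langle g^k\rangle \times H'$ with $H'$ hyperbolic is a weakening of this (Osajda--Prytu\l{}a give a free, not merely hyperbolic, transverse factor) and does follow from the commensurability statement after a short deduction: a finite-index subgroup of $C_g$ is isomorphic to some $F_m \times \IZ$, a power $g^k$ lies in (a finite-index subgroup of) its center, and passing to $F_m \times \langle g^k\rangle$ gives your splitting. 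The permanence-lemma chain you then run (Lemmas~\ref{lemjsd9823ds}, \ref{lem:hd.eq.seq}, \ref{lem:powers}, \ref{lem:asdim}) is correct. The only genuine gap is the one you flag yourself --- the structural input is hypothesized rather than supplied --- and the citation to \cite[Theorem B]{osaj_pru} closes it, at which point your argument coincides with the paper's.
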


\begin{proof}
Osajda--Prytu\l{}a \cite[Theorem B]{osaj_pru} showed that centralizers of infinite order elements in systolic groups are commensurable with $F_n \times \IZ$ for some $n \in \IN_0$. So we get $\hd_\IQ(C_g / \langle g \rangle) = \asdim(C_g / \langle g \rangle) \le 1$.
\end{proof}

\subsubsection{Relatively hyperbolic groups}

We will be using Osin's definition of relatively hyperbolic groups \cite[Definition 1.6]{osin_memoirs}. If the relatively hyperbolic group is finitely generated, then the number of peripheral subgroups is finite and each of them is also finitely generated \cite[Theorem 1.1]{osin_memoirs}. In this case the definition of Osin coincides with the ones investigated by Bowditch \cite{bowditch_rel} and Farb \cite{farb} (especially, finitely generated relatively hyperbolic groups in Osin's sense satisfy the bounded coset penetration property of Farb).

Recall \cite[Definition 4.1]{osin_memoirs} that an element is called parabolic if it is contained in a conjugate of a peripheral subgroup, and otherwise the element is called hyperbolic.

\begin{lem}\label{lemiu23fd}
Let $G$ be finitely generated and hyperbolic relative to $(H_1, \ldots, H_k)$ and let $g \in G$ be an element of infinite order.

\begin{itemize}
\item If $g$ is hyperbolic, then the centralizer $C_g(G)$ is virtually cyclic.
\item If $g$ is parabolic, i.e., $g = x h_i x^{-1}$ for $x \in G$, $h_i \in H_i$, then $C_g(G) = x C_{h_i}(H_i) x^{-1}$.
\end{itemize}
\end{lem}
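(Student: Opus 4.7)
The plan is to treat the two cases separately, using the general principle that $C_{xhx^{-1}}(G) = xC_h(G)x^{-1}$ to reduce conjugation questions to centralizer questions at the base point.

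\textbf{Parabolic case.} The identity $C_g(G) = xC_{h_i}(G)x^{-1}$ is immediate from $g = xh_ix^{-1}$, so the real task is to show $C_{h_i}(G) = C_{h_i}(H_i)$, i.e.\ that every element of $G$ commuting with the infinite order element $h_i$ already lies in $H_i$. I would invoke the almost malnormality of the peripheral family, which is built into Osin's setup: if $H_j \cap yH_iy^{-1}$ is infinite, then $i = j$ and $y \in H_i$ (see \cite[Proposition 2.36 and Theorem 1.4]{osin_memoirs}, or equivalently the statement that peripheral subgroups are strongly relatively quasi-convex and have bounded coset penetration). Suppose $y \in C_{h_i}(G)$. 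Then $yh_iy^{-1} = h_i$ lies in $H_i \cap yH_iy^{-1}$, and since $h_i$ has infinite order, $\langle h_i \rangle$ is an infinite subgroup of this intersection. Almost malnormality forces $y \in H_i$, so $C_{h_i}(G) \subseteq H_i$ and hence $C_{h_i}(G) = C_{h_i}(H_i)$, giving the desired conclusion.

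\textbf{Hyperbolic case.} If $g$ is hyperbolic, I would cite Osin directly: in \cite[Corollary 4.21 and Theorem 4.19]{osin_memoirs} (or equivalently \cite[Theorem 4.3]{osin_memoirs}) it is proved that every hyperbolic element of infinite order in a relatively hyperbolic group is contained in a unique maximal virtually cyclic subgroup $E(g)$, and this $E(g)$ coincides with the commensurator, hence contains the centralizer $C_g(G)$. Thus $C_g(G) \le E(g)$ is virtually cyclic. If I wanted to argue from scratch I would use that $g$ acts loxodromically on the relative Cayley graph with an essentially unique quasi-axis; the bounded coset penetration property and the geometry of the cusped space then force any $y$ commuting with $g$ to fix the pair of limit points of $g$ on the Bowditch boundary, and the stabilizer of such a pair is virtually cyclic. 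But since the machinery is already in Osin's memoir it is cleaner to just quote.

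The only non-routine step is the parabolic case, and within it the invocation of almost malnormality: I would want to make sure I use the exact formulation in Osin's sense (as opposed to Bowditch's or Farb's). Everything else is either a formal consequence ($C_{xhx^{-1}} = xC_hx^{-1}$) or a direct quotation from \cite{osin_memoirs}.
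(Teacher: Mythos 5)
Your proposal is correct and takes essentially the same approach as the paper. The parabolic case is identical: conjugating reduces to the base point, then almost malnormality in Osin's sense (the fact that $h_i$ has infinite order makes $H_i \cap yH_iy^{-1}$ infinite, forcing $y \in H_i$) shows $C_{h_i}(G) = C_{h_i}(H_i)$. For the hyperbolic case the paper argues slightly differently — it cites Osin to show $C_g(G)$ is a strongly relatively quasi-convex subgroup, hence itself a hyperbolic group, and then uses that a hyperbolic group with an infinite order central element is virtually cyclic — whereas you quote the existence of the maximal elementary subgroup $E(g) \supseteq C_g(G)$; both routes are standard consequences of the same part of Osin's memoir and lead immediately to the claim.
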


\begin{proof}
Let $g$ be hyperbolic. By \cite[Theorem 4.19]{osin_memoirs} we know that $C_g(G)$ is a strongly relatively quasi-convex subgroup of $G$, and by \cite[Theorem 4.16]{osin_memoirs} we know that such subgroups are itself hyperbolic. Since in hyperbolic groups centralizers are virtually cyclic, we conclude that $C_g(G)$ is virtually cyclic.

Let $g$ be parabolic (i.e., $g = x h_i x^{-1}$ for elements $x \in G$ and $h_i \in H_i$). Then we have $C_g(G) = C_{x h_i x^{-1}}(G) = x C_{h_i}(G) x^{-1}$. We will show now $C_{h_i}(G) = C_{h_i}(H_i)$.

Let $p \in C_{h_i}(G)$. Conjugating $C_{h_i}(H_i)$ by $p$ we get $h_i \in p C_{h_i}(H_i) p^{-1} \subset p H_i p^{-1}$. By the almost mal-normality condition \cite[Proposition 2.36]{osin_memoirs} we know that $p H_i p^{-1} \cap H_i$ can only be infinite if $p \in H_i$. Since $h_i$ has infinite order, we conclude $p \in H_i$.
\end{proof}

\begin{cor}
Let $G$ be a finitely generated, relatively hyperbolic group.

If the peripheral subgroups satisfy the Burghelea conjecture, then so does $G$.
\end{cor}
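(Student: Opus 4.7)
The plan is to verify the Burghelea conjecture for $G$ by showing that $T^G_\ast(g;\IC) = 0$ for each conjugacy class $[g] \in \langle G \rangle^{\infty}$ individually, which suffices since a direct sum vanishes iff each summand does. The key observation is that $T^G_\ast(g;\IC)$ depends only on the pair $(C_g, \langle g \rangle)$ through the fibration $B\langle g \rangle \to BC_g \to B(C_g/\langle g\rangle)$, so that any isomorphism of such pairs (in particular, one induced by conjugation in $G$, or one transporting the data into a peripheral subgroup) preserves this group.

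I would then invoke Lemma~\ref{lemiu23fd} and split into the two cases it supplies. If $g$ is hyperbolic, $C_g$ is virtually cyclic; picking any infinite cyclic subgroup $K < C_g$ of finite index, the intersection $\langle g \rangle \cap K$ is nontrivial (since some power of $g$ lies in $K$) and therefore of finite index both in $\langle g \rangle$ and in $K$. This forces $\langle g \rangle$ itself to have finite index in $C_g$, so $C_g/\langle g \rangle$ is finite. By Lemma~\ref{lemjsd9823ds} its rational homological dimension vanishes, so $H_k(C_g/\langle g \rangle;\IC) = 0$ for all $k > 0$; the inverse system defining $T^G_\ast(g;\IC)$ is then eventually zero and a diagram chase propagates the vanishing down to degree $0$ or $1$, giving $T^G_\ast(g;\IC) = 0$.

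If $g$ is parabolic, write $g = x h_i x^{-1}$ with $h_i \in H_i$; then Lemma~\ref{lemiu23fd} yields $C_g(G) = x C_{h_i}(H_i) x^{-1}$ and of course $\langle g \rangle = x \langle h_i \rangle x^{-1}$. Conjugation by $x$ is an isomorphism of pairs $(C_{h_i}(H_i), \langle h_i \rangle) \xrightarrow{\cong} (C_g(G), \langle g \rangle)$, and by the functorial observation of the first paragraph this yields $T^G_\ast(g;\IC) \cong T^{H_i}_\ast(h_i;\IC)$. Since $h_i \in H_i$ has infinite order and $H_i$ satisfies the Burghelea conjecture by hypothesis, the right-hand side vanishes.

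There is essentially no further obstacle once Lemma~\ref{lemiu23fd} is in hand; the real technical content is pushed into that lemma, which rests on Osin's structural results — strong relative quasi-convexity (hence hyperbolicity, hence virtual cyclicity) of hyperbolic-element centralizers, and the almost malnormality of peripheral subgroups to identify parabolic centralizers. Given those inputs, the corollary is a short bookkeeping argument matching centralizer quotients in $G$ with the corresponding quotients in the peripheral groups.
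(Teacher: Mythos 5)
Your proof is correct and follows exactly the route the paper leaves implicit (the corollary is stated without proof immediately after Lemma~\ref{lemiu23fd}, so the intended argument is precisely the bookkeeping you carry out). You correctly identify that $T^G_\ast(g;\IC)$ is a function only of the pair $(C_g,\langle g\rangle)$, handle the hyperbolic case by observing that an infinite cyclic subgroup of a virtually cyclic group automatically has finite index (hence $C_g/\langle g\rangle$ is finite, its positive-degree rational homology vanishes, and the inverse limit dies), and reduce the parabolic case by conjugation to the peripheral subgroup and invoke the hypothesis there.

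One small stylistic remark: for the hyperbolic case you do not really need the ``diagram chase'' phrasing. Once $C_g/\langle g\rangle$ is finite, the Euler class of the $S^1$-bundle lies in $H^2(C_g/\langle g\rangle;\IC)=0$, so $S=0$, and all groups $H_{\ast+2n}$ with $n>0$ already vanish; the inverse limit is therefore $0$ by inspection of the tower, and in fact the same conclusion holds whenever $\hd_\IQ(C_g/\langle g\rangle)<\infty$, which is the observation underlying the class $E(\IQ)$ used throughout the paper.
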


\begin{thm}
Let $G$ be a finitely generated, relatively hyperbolic group.

If the peripheral subgroups of $G$ have finite asymptotic dimension, then $G$ also has finite asymptotic dimension.

If the reduced centralizers of the peripheral subgroups have finite asymptotic dimension, then so do the reduced centralizers of $G$.

Therefore, if the peripheral subgroups satisfy Conjecture~\ref{conjdsf89023}, then so does $G$.
\end{thm}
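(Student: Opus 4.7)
The plan is to treat the three assertions in sequence, each building on what came before. The first assertion — that $\asdim(G) < \infty$ when all peripheral subgroups have finite asymptotic dimension — is a theorem due to Osin (\emph{Asymptotic dimension of relatively hyperbolic groups}), so here I would simply invoke it.

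For the second assertion, fix $g \in G$ of infinite order and split into cases using Lemma~\ref{lemiu23fd}. If $g$ is hyperbolic, the centralizer $C_g(G)$ is virtually cyclic, and since $\langle g\rangle \cong \IZ$ is a central infinite cyclic subgroup of $C_g(G)$, an elementary index argument (any infinite cyclic subgroup of a virtually $\IZ$ group must meet a finite-index $\IZ$ nontrivially, hence is itself of finite index) shows that $\langle g\rangle$ has finite index in $C_g(G)$. Thus $C_g(G)/\langle g\rangle$ is finite and has asymptotic dimension zero. If $g$ is parabolic, write $g = xh_ix^{-1}$ with $h_i \in H_i$. Then $C_g(G) = xC_{h_i}(H_i)x^{-1}$, and since $\langle g\rangle = x\langle h_i\rangle x^{-1}$, conjugation by $x^{-1}$ induces a group isomorphism $C_g(G)/\langle g\rangle \cong C_{h_i}(H_i)/\langle h_i\rangle$. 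Because asymptotic dimension of a countable group depends only on its abstract group structure (any two proper left-invariant metrics are coarsely equivalent), the hypothesis yields $\asdim(C_g(G)/\langle g\rangle) < \infty$.

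The third assertion follows by combining the second with standard subgroup monotonicity of asymptotic dimension. Assuming $\asdim(G) < \infty$, each peripheral subgroup $H_i$ inherits finite asymptotic dimension from $G$ (the restriction of a proper left-invariant metric on $G$ to $H_i$ is proper and left-invariant). By Conjecture~\ref{conjdsf89023} applied to each $H_i$, the reduced centralizers of infinite-order elements in $H_i$ have finite asymptotic dimension. The second assertion then lifts this to all reduced centralizers in $G$, verifying Conjecture~\ref{conjdsf89023} for $G$. The only nontrivial input is Osin's asymptotic dimension theorem; once Lemma~\ref{lemiu23fd} is in hand, the rest is a routine case split combined with subgroup monotonicity, and I do not anticipate any real obstacle.
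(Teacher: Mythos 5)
Your proposal is correct and follows the paper's own route almost exactly: invoke Osin's theorem for the first assertion, use Lemma~\ref{lemiu23fd} to split into the hyperbolic case (where the reduced centralizer is finite) and the parabolic case (where conjugation identifies $C_g(G)/\langle g\rangle$ with a reduced centralizer of $H_i$), and then chain the first two assertions with subgroup monotonicity of asymptotic dimension for the third. You simply spell out details the paper leaves implicit.
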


\begin{proof}
The first statement is due to Osin \cite[Theorem 2]{osin}. Note that this statement does in general not hold true for weakly relatively hyperbolic groups (i.e., dropping the bounded coset penetration property in the definition of relatively hyperbolic groups) \cite[Proposition 3]{osin}.

The second statement follows immediately from the above Lemma~\ref{lemiu23fd}, and the third statement follows from the first and second one.
\end{proof}

\begin{rem}
We would like to say that if the peripheral subgroups all lie in the class $E(\IQ)$, then so does $G$. By Lemma~\ref{lemiu23fd} we at least know that if the reduced centralizers of the peripheral subgroups all have finite rational homological dimension, then so do the reduced centralizers of $G$.

But the problem is that we do not know whether $G$ has finite rational homological dimension provided all of its peripheral subgroups have. By Dahmani \cite{dahmani} we know that a torsion-free group hyperbolic relative to a collection of groups admitting a finite classifying space admits itself a finite classifying space, and that $G$ admits a finite-dimensional $\EGunder$ provided all its peripheral subgroups do. But although we always have $\hd_{\IQ}(G) \le \gdunder(G)$, it is not known if finiteness of $\hd_{\IQ}(G)$ implies finiteness of $\gdunder(G)$.
\qed
\end{rem}

Since hyperbolic groups are hyperbolic relative to the trivial subgroup, we immediately conclude from the above discussion the following corollary. All of these properties of hyperbolic groups are already known for some time. Furthermore, many of the proofs of the properties of relatively hyperbolic groups that we needed above rely on the fact that we already know the corresponding statements for hyperbolic groups. So technically speaking the following is not a corollary of all of the above, but already an ingredient in the proofs of the above statements.

\begin{cor}
Hyperbolic groups have finite asymptotic dimension, satisfy both Conjectures~\ref{conj:njdsf89} and~\ref{conjdsf89023}, and lie in the class $E(\IQ)$ and hence satisfy the Burghelea conjecture.
\end{cor}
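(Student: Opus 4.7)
The plan is to apply the results just established for relatively hyperbolic groups to the degenerate case where the collection of peripheral subgroups is empty (equivalently, consists only of the trivial group), since every hyperbolic group is relatively hyperbolic with respect to this collection. The trivial group vacuously lies in $E(\IQ)$, has asymptotic dimension $0$, and has zero reduced centralizers, so every hypothesis in the preceding theorems about peripheral subgroups is satisfied.

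Concretely, I would proceed as follows. First, finite asymptotic dimension of a hyperbolic group $G$ follows from the previous theorem applied to the trivial peripheral structure (alternatively, this is the classical theorem of Gromov--Roe). Second, to verify Conjecture~\ref{conj:njdsf89}, I would note that a hyperbolic group acts geometrically on its Rips complex $P_d(G)$ for $d$ sufficiently large, which is a finite-dimensional model for $\EGunder$; Lemma~\ref{sdf23fdjn23} then gives $\hd_\IQ(G)\le\gdunder(G)<\infty$, while in the torsion-free case $P_d(G)$ is a finite-dimensional model for $EG$, so $\hd(G)<\infty$ by Lemma~\ref{jnsdu23}. Third, Conjecture~\ref{conjdsf89023} follows from Lemma~\ref{lemiu23fd}: every infinite order element $g\in G$ is hyperbolic relative to the trivial collection, so $C_g$ is virtually cyclic, hence $C_g/\langle g\rangle$ is finite and in particular has asymptotic dimension $0$.

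Combining the last two points, every infinite order $g\in G$ has $\hd_\IQ(C_g/\langle g\rangle)=0$, and $\hd_\IQ(G)<\infty$, so $G\in E(\IQ)$. Since membership in $E(\IQ)$ implies the Burghelea conjecture (as noted in Section~4 of the paper), this completes the list of assertions.

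There is no serious obstacle here; the work has all been done in the preceding theorems and lemmas, and the only task is to verify that the trivial peripheral collection satisfies the hypotheses — which it does trivially. The mild subtlety worth spelling out is that finiteness of $\gdunder(G)$ for hyperbolic $G$ relies on the Rips complex construction rather than on any of the statements cited verbatim above, but this is a well-known classical fact and needs only to be invoked.
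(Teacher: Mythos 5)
Your proposal is correct and takes the same approach the paper intends: view a hyperbolic group as hyperbolic relative to the trivial peripheral collection and feed this into the theorems and Lemma~\ref{lemiu23fd} from the relatively hyperbolic section. The paper treats this corollary as essentially self-evident from the preceding discussion and even notes explicitly that ``technically speaking the following is not a corollary of all of the above, but already an ingredient in the proofs of the above statements.'' You correctly identify, and usefully spell out, the one ingredient that does not actually follow from the relatively hyperbolic results as stated---namely finiteness of $\hd_\IQ(G)$ (and $\hd(G)$ in the torsion-free case), since the paper remarks in the relatively hyperbolic section that it does not know whether $\hd_\IQ(G)<\infty$ follows from the peripherals having it. Your appeal to the Rips complex as a finite-dimensional model for $\EGunder$, combined with Lemma~\ref{sdf23fdjn23}, is exactly the classical fact that closes this gap, and the rest (virtually cyclic centralizers of infinite order elements via Lemma~\ref{lemiu23fd}, hence finite reduced centralizers) matches the paper's line of reasoning.
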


\subsection{One-relator groups}

Matsnev \cite{matsnev} proved that one-relator groups always have finite asymptotic dimension.

Chiswell \cite{chiswell} proved that the (rational) cohomological dimension of one-relator groups is always bounded from above by $2$. By L{\"u}ck \cite[Section 4.12]{lueck_survey} we know that we even always have a $2$-dimensional model for $\EGunder$.

Now we have to understand centralizers in one-relator groups. If the group has torsion, we know by Newman \cite[Theorem 2]{newman} that centralizers of any non-trivial element are cyclic, and therefore the reduced centralizers vanish.

In the case of torsion-free one-relator groups we know by Karrass--Pietrowski--Solitar \cite{kps} that every finitely generated subgroup $H$ of a centralizer $C_g$ is of the form $H \cong F_n \times \IZ$ for some $n \ge 0$. Therefore the homological dimension of $H/\langle g\rangle$ is bounded from above by $1$, and hence $\hd(C_g/\langle g \rangle) \le 1$. The same is true for asymptotic dimension, i.e., $\asdim(C_g/\langle g \rangle) \le 1$.

\begin{cor}
One-relator groups have finite asymptotic dimension, satisfy both Conjectures \ref{conj:njdsf89} and \ref{conjdsf89023}, and lie in the class $E(\IQ)$ and satisfy the Burghelea conjecture.
\end{cor}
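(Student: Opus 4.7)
The plan is to assemble the corollary by citing the four ingredients gathered in the paragraphs preceding the statement and plugging them into the general machinery of Sections 2–3. There is no genuinely hard step; the work has been outsourced to Matsnev, Chiswell, Newman, and Karrass–Pietrowski–Solitar, and the task is to check that their outputs match precisely what Conjectures \ref{conj:njdsf89}.\ref{12342dsf} and \ref{conjdsf89023} and the definition of $E(\IQ)$ require.

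First I would dispose of finite asymptotic dimension by quoting Matsnev directly. Next, for Conjecture \ref{conj:njdsf89}.\ref{12342dsf}, I would invoke Chiswell's bound $\cd_\IQ(G)\le 2$ (or, alternatively, Lück's 2-dimensional model for $\EGunder$ combined with Lemma~\ref{sdf23fdjn23}) and apply Lemma~\ref{lem:efs000} to obtain $\hd_\IQ(G)\le 2$. If one additionally wants the torsion-free part of Conjecture~\ref{conj:njdsf89}, one notes that in the torsion-free case $\EGunder=EG$, so the 2-dimensional model gives $\hd(G)\le 2$ as well.

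The verification of Conjecture~\ref{conjdsf89023} and the $E(\IQ)$-condition on reduced centralizers splits into two cases according to whether $G$ has torsion. In the torsion case, Newman's theorem forces $C_g$ to be cyclic for every non-trivial $g$, so $C_g/\langle g\rangle$ is finite; hence both $\asdim(C_g/\langle g\rangle)=0$ and $\hd_\IQ(C_g/\langle g\rangle)=0$ by Lemma~\ref{lemjsd9823ds}. In the torsion-free case I would argue as follows: any $h\in C_g/\langle g\rangle$ lies in the image of a finitely generated subgroup $H\le C_g$ containing $g$, and by Karrass–Pietrowski–Solitar $H\cong F_n\times\IZ$ with $\IZ$ the $\langle g\rangle$-factor (after possibly replacing $g$ by a power, which is harmless by Lemma~\ref{lem:powers}). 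Therefore $H/\langle g\rangle\cong F_n$ has both homological and asymptotic dimension at most $1$. Since this bound is uniform in $H$ and homological/asymptotic dimension behaves well under the resulting filtered colimit $C_g/\langle g\rangle=\colim_H H/\langle g\rangle$, we conclude $\hd(C_g/\langle g\rangle)\le 1$ and $\asdim(C_g/\langle g\rangle)\le 1$.

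Finally, having established $\hd_\IQ(G)<\infty$ and $\hd_\IQ(C_g/\langle g\rangle)<\infty$ for every infinite order $g$, the group $G$ lies in $E(\IQ)$ by definition, which in turn implies the Burghelea conjecture as noted right after that definition. The only delicate point, and the one I would want to state carefully, is the passage from ``every finitely generated subgroup of $C_g$ is $F_n\times\IZ$'' to a bound on $\hd_\IQ(C_g/\langle g\rangle)$ itself; this is where I would appeal to the colimit argument (or equivalently to the fact that $C_g/\langle g\rangle$ is itself locally free of rank $\le 1$-behaviour) rather than to a single global structure theorem for $C_g$, since $C_g$ need not be finitely generated.
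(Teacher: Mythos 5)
Your proposal matches the paper's own treatment almost line for line: Matsnev for finite asymptotic dimension, Chiswell/L\"uck for $\hd_\IQ(G)\le 2$ (and $\hd(G)\le 2$ in the torsion-free case), Newman for the torsion case, and Karrass--Pietrowski--Solitar plus a filtered-colimit argument for the torsion-free case; the paper leaves the colimit step implicit where you make it explicit, which is the right instinct since $C_g$ need not be finitely generated. One small inaccuracy: you cannot in general arrange that $g$ generates the $\IZ$-factor of $H\cong F_n\times\IZ$, even after passing to powers (in $BS(1,2)$ the element $a$ is the square of $t^{-1}at$, and taking powers of $a$ does not fix this), so $H/\langle g\rangle$ may be $F_n\times(\IZ/k\IZ)$ rather than $F_n$; this does not harm the conclusion, since $\hd_\IQ$ and $\asdim$ of such a group are still at most $1$, but the clause ``with $\IZ$ the $\langle g\rangle$-factor'' should be dropped.
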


\subsection{Elementary amenable groups of finite Hirsch length}
\label{secnjkwe9023}

Hillman \cite[Theorem 1.8]{hillman} generalized the notion of Hirsch length from solvable groups to elementary amenable groups. The properties we need are: (i) if $H$ is a subgroup of $G$, then $h(H) \le h(G)$, and (ii) if $H$ is a normal subgroup of $G$, then $h(G) = h(H) + h(G/H)$. Note that the class of elementary amenable groups is, by definition, closed under taking subgroups and quotients.

Finn-Sell--Wu \cite[Theorem 2.12]{finnsellwu} proved that the asymptotic dimension of an elementary amenable group is bounded from above by its Hirsch length. Dranishnikov--Smith \cite[Theorem 3.5]{ds} proved that for virtually polycyclic groups equality holds, and it is an open problem whether equality holds for all elementary amenable groups. But nevertheless, since the Hirsch length of reduced centralizers is bounded by the Hirsch length of the group, we conclude that elementary amenable groups of finite Hirsch length satisfy Conjecture~\ref{conjdsf89023}.

Flores--Nucinkis \cite[Theorem 1]{flores_nun} proved that for an elementary amenable group~$G$ of finite Hirsch length we have $\underline{\operatorname{hd}}(G) = h(G)$. Without defining $\underline{\operatorname{hd}}(G)$ we just note that we always have $\hd_{\IQ}(G) \le \underline{\operatorname{hd}}(G) \le \gdunder(G)$. The proof of this is analogous to the corresponding cohomological statement \cite[Theorem 2]{bln}. Also, Bridson--Kropholler \cite[Theorem I.2]{bridson_kropholler} prove that if $G$ has no torsion, then $\hd(G) \le h(G)$. This shows that elementary amenable groups of finite Hirsch length satisfy Conjecture~\ref{conj:njdsf89} and lie in the class $E(\IQ)$ (but the latter was already known, see Remark~\ref{jkdsf9023}.\ref{fsd23ds}).

Summarizing all the above we get the following theorem:

\begin{thm}
Elementary amenable groups of finite Hirsch length have finite asymptotic dimension, satisfy both Conjectures \ref{conj:njdsf89} and \ref{conjdsf89023}, and lie in the class $E(\IQ)$ and hence satisfy the Burghelea conjecture.
\end{thm}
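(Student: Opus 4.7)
The plan is to string together the four ingredients already collected in this subsection by exploiting the two fundamental properties of the Hirsch length recorded by Hillman: subadditivity under extensions, $h(G) = h(H) + h(G/H)$ for $H \triangleleft G$, and monotonicity under subgroups, $h(H) \le h(G)$. Combined with the fact that the class of elementary amenable groups is closed under passage to subgroups and quotients, this allows me to bound the Hirsch length of any reduced centralizer by $h(C_g/\langle g\rangle) \le h(C_g) \le h(G) < \infty$, so that every finiteness statement for $G$ can be transported to $C_g/\langle g\rangle$ at no extra cost.

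First I would apply the Finn-Sell--Wu inequality $\asdim(G) \le h(G)$ to deduce that $G$ has finite asymptotic dimension. The same bound applied to $C_g/\langle g\rangle$, which is again elementary amenable of finite Hirsch length by the observation above, yields $\asdim(C_g/\langle g\rangle) < \infty$ and thereby establishes Conjecture~\ref{conjdsf89023}. For Conjecture~\ref{conj:njdsf89}.\ref{12342dsf} I would invoke Flores--Nucinkis' equality $\underline{\operatorname{hd}}(G) = h(G)$ together with the general inequality $\hd_\IQ(G) \le \underline{\operatorname{hd}}(G)$ to obtain $\hd_\IQ(G) \le h(G) < \infty$. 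The torsion-free half of Conjecture~\ref{conj:njdsf89} then follows at once from Bridson--Kropholler's bound $\hd(G) \le h(G)$.

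To finish, I would repeat the $\hd_\IQ$ estimate for each reduced centralizer to obtain $\hd_\IQ(C_g/\langle g\rangle) \le h(C_g/\langle g\rangle) < \infty$, which together with $\hd_\IQ(G) < \infty$ is exactly the defining condition for membership in $E(\IQ)$. Since groups in $E(\IQ)$ automatically satisfy the Burghelea conjecture, the last clause of the theorem follows without further argument.

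I do not anticipate any genuine obstacle: the analytic and homological content lives entirely inside the cited papers of Hillman, Finn-Sell--Wu, Flores--Nucinkis and Bridson--Kropholler, and the proof is in essence a bookkeeping exercise verifying that reduced centralizers stay within the class of elementary amenable groups of finite Hirsch length. The only detail that requires any attention is this last closure property, and it is immediate from the monotonicity and additivity of $h$ combined with the closure of elementary amenability under subgroups and quotients.
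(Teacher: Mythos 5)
Your argument is correct and follows essentially the same route as the paper: bound $h(C_g/\langle g\rangle)$ via subgroup monotonicity and additivity of Hirsch length, then invoke Finn-Sell--Wu for asymptotic dimension, Flores--Nucinkis (together with $\hd_\IQ \le \underline{\operatorname{hd}}$) for rational homological dimension, and Bridson--Kropholler for the torsion-free case. The paper additionally remarks, without using it, that one could alternatively invoke Hillman--Linnell's theorem that such groups are locally-finite by virtually-solvable and appeal to the known solvable case, but your direct bookkeeping argument is exactly what the paper carries out.
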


Note that Hillman--Linnell \cite{hillmanlinnell} showed that elementary amenable groups of finite Hirsch length are locally-finite by virtually-solvable. So we could have deduced the above corollary also from the corresponding, already known statements for solvable groups.

\subsection{Mapping class groups}

Let $S = S^s_{g,r}$ be the compact orientable surface of genus $g$, with $r$ boundary components and with $s$ punctures (removed points).
By $Homeo^+(S,\partial S)$ we denote the group of orientation preserving homeomorphisms of $S$
which fix the punctures and the boundary of $S$ pointwise. 
The mapping class group of $S$ is defined by $MCG(S) = \pi_0(Homeo^+(S,\partial S))$.
In general $MCG(S)$ does not belong to any of classes of groups we study in this paper. 
In particular mapping class groups are neither linear, hyperbolic nor $\CAT(0)$.  

The strategy to prove that $MCG(S) \in E(\IQ)$ and that $MCG(S)$ satisfies Conjecture~\ref{conjdsf89023} is as follows.
First we reduce the problem to surfaces with no punctures.
This is done just for the convenience and is used only in the proof of Lemma~\ref{L:pA}. 
Then, using the canonical forms for elements in mapping class groups, 
we show that the reduced centralizer has a finite index subgroup which fits into a short exact sequence 
with the peripheral groups having finite rational homological dimension and finite asymptotic dimension.

In order to reduce the general case to the case with no punctures we use the Birman exact sequence:

$$
1 \rightarrow \pi_1(S^{s-1}_{g,r}) \xrightarrow{Push} MCG(S^s_{g,r}) \xrightarrow{F} MCG(S^{s-1}_{g,r}) \rightarrow 1.
$$
(Note that this exact sequence does not apply when $S^s_{g,r}$ is the one punctured torus. Then we have $MCG(S^1_{0,0}) = MCG(S^0_{0,0})$. 
The reason is that the center of the fundamental group of the torus is non-trivial.)

The map $F$ is induced by inclusion to the surface with one puncture less (for the definition of the push map consult \cite[Theorem 4.6]{farb.margalit}).
 
Fundamental group of a surface belongs to $E(\IQ)$ and satisfies Conjecture~\ref{conjdsf89023}. 
Thus by induction and the fact that $E(\IQ)$ 
and the class of groups satisfying Conjecture~\ref{conjdsf89023} are closed under extensions (\cite{chadha_passi} and Lemma~\ref{lem:perm.con.asdim}),
it is enough to deal with surfaces with no punctures.

>From now on we assume that $S$ has no punctures. We will need the following three lemmata for the final proof.

\begin{lem}
$MCG(S)$ has finite rational homological dimension, and if it is torsion-free then it has finite homological dimension.
\end{lem}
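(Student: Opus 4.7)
The plan is to realise Teichmüller space $\mathcal{T}(S)$ as a finite-dimensional model for $\EGunder(MCG(S))$, after which both statements follow immediately from the dimension lemmata collected in Section 3.3.

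First I would recall the classical fact that for a compact orientable surface $S$ (in our case, with no punctures), the Teichmüller space $\mathcal{T}(S)$ is homeomorphic to a finite-dimensional Euclidean space; it is thus contractible and finite-dimensional, and the mapping class group $MCG(S)$ acts on it properly by changing the marking. In the sporadic low-complexity cases (sphere, torus, disk, annulus, pair of pants) where Teichmüller space degenerates, $MCG(S)$ is finite or virtually cyclic, and the claims are trivial; so I would restrict to surfaces of negative Euler characteristic.

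Second, I would invoke Kerckhoff's solution of the Nielsen realization problem: any finite subgroup $F<MCG(S)$ can be lifted to a group of isometries of some hyperbolic structure on $S$, and its fixed-point set $\mathcal{T}(S)^F$ is naturally identified with the Teichmüller space of the orbifold quotient $S/F$, which is again contractible. This shows that $\mathcal{T}(S)$ is a model for $\EGunder(MCG(S))$, and hence $\gdunder(MCG(S))\le \dim \mathcal{T}(S)<\infty$.

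Third, Lemma~\ref{sdf23fdjn23} then gives $\hd_{\IQ}(MCG(S))<\infty$. If moreover $MCG(S)$ is torsion-free, the proper action on $\mathcal{T}(S)$ is automatically free, so $\mathcal{T}(S)$ is a model for $EG$ and $\gd(MCG(S))<\infty$; Lemmata~\ref{jnsdu23} and \ref{lem:efs000} then yield $\hd(MCG(S))\le\cd(MCG(S))\le\gd(MCG(S))<\infty$. The main subtlety here is only notational (checking that the exceptional low-complexity surfaces really do satisfy the conclusion); once one appeals to Nielsen realization, the argument is formally identical to the one used earlier for $\CAT(0)$ groups acting on a finite-dimensional $\CAT(0)$ space.
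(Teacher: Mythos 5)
Your approach---realizing Teichm\"{u}ller space $\mathcal{T}(S)$ as a finite-dimensional model for $\EGunder(MCG(S))$ via Kerckhoff's Nielsen realization theorem---is a genuinely different route from the paper's, which simply cites Harer's theorem on the virtual cohomological dimension of mapping class groups. For closed surfaces your argument is correct and gives a cleaner conceptual picture of \emph{why} the dimension is finite.

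However, in the paper's conventions $S = S^0_{g,r}$ may have boundary, and $MCG(S) = \pi_0(Homeo^+(S,\partial S))$ fixes $\partial S$ pointwise. In this case your proof has a gap at the very first step: the $r$ Dehn twists about the boundary circles generate a central $\IZ^r < MCG(S)$ that acts \emph{trivially} on the usual Teichm\"{u}ller space of hyperbolic structures with geodesic boundary, since such a twist is undone by an isotopy which is merely not required to fix $\partial S$ pointwise. Consequently the $MCG(S)$-action on $\mathcal{T}(S)$ is not proper, and $\mathcal{T}(S)$ is not a model for $\EGunder(MCG(S))$. To repair this you would need either (a) the central extension $1 \to \IZ^r \to MCG(S^0_{g,r}) \to MCG(S^r_{g,0}) \to 1$ obtained by capping each boundary circle with a once-punctured disk, combined with Lemma~\ref{lem:hd.eq.seq} and the punctured case of your argument; or (b) the observation that $MCG(S,\partial S)$ is torsion-free whenever $\partial S \neq \emptyset$ together with a finite-dimensional, free, cocompact model such as Harer's arc-complex spine (or a decorated Teichm\"{u}ller space recording boundary data). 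Either repair lands essentially on Harer's theorem, which the paper invokes directly and which treats boundary and punctures uniformly.
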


\begin{proof}
If $S$ has negative Euler characteristic then both claims follow from Harer' results \cite[Theorem 4.1]{harer}.
For non-negative Euler characteristic we have only two cases where the mapping class group is infinite, namely the torus and the annulus: we have $MCG(S^0_{1,0})=SL_2(\IZ)$, $MCG(S^0_{0,2}) = \IZ$ and both have finite (rational) homological dimension.
\end{proof}

Note that in this section we take the liberty to use a slightly different notation: instead of $C_g/\langle g \rangle$ we write $C(g)/g$. 

\begin{lem}\label{L:pA}
Let $g \in MCG(S)$ be a pseudo-Anosov element. Then $C(g)/g$ is finite.
\end{lem}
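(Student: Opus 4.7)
The plan is to deduce the lemma from the classical structure theorem that the centralizer of any pseudo-Anosov mapping class is virtually infinite cyclic (due to McCarthy; see also the discussion in \cite[Section 7]{farb.margalit}). Once this is accepted, the lemma follows in one line: a pseudo-Anosov $g$ has stretch factor $\lambda(g) > 1$, hence no non-trivial power of $g$ acts as the identity on $S$, so $\langle g \rangle \cong \IZ$ sits as an infinite cyclic subgroup inside the virtually $\IZ$ group $C(g)$. Any infinite cyclic subgroup of a virtually $\IZ$ group has finite index, and therefore $C(g)/g$ is finite.

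If one prefers to argue virtual cyclicity from scratch rather than quoting it, I would proceed as follows. A pseudo-Anosov $g$ has exactly two fixed points in the Thurston boundary of Teichmüller space of $S$, namely the projective classes of its stable and unstable measured foliations $[\mathcal{F}^s], [\mathcal{F}^u]$. Any element of $C(g)$ must permute this fixed-point set, producing a homomorphism $C(g) \to \IZ/2\IZ$; let $K$ denote its kernel, which has index at most two. For $h \in K$ the invariance of $\mathcal{F}^u$ up to a positive scalar defines a stretch factor $\mu(h) > 0$, and $h \mapsto \log \mu(h)$ is then a homomorphism $K \to \IR$. I would next check that its image is discrete (using that the set of log stretch factors of pseudo-Anosovs on $S$ is discrete, which itself follows from the properness of the action of $MCG(S)$ on Teichmüller space) and that its kernel is finite (since elements there preserve both measured foliations exactly, hence fix a point of Teichmüller space, whose stabilizers in $MCG(S)$ are finite). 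Combining these two facts gives that $K$, and hence $C(g)$, is virtually $\IZ$, from which the lemma again follows as in the previous paragraph.

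The main obstacle is purely expositional: every input above is classical in Thurston--Bers--Nielsen theory, and the cleanest presentation is to invoke the structure theorem for centralizers of pseudo-Anosovs as a black box and deduce the lemma in one line. One should also recall the assumption from earlier in this section that $S$ is without punctures, which is harmless here because the pseudo-Anosov theory and the centralizer result above apply verbatim to the compact (closed, or with boundary) setting that this reduction has already isolated.
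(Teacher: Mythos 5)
Your proof is correct and follows essentially the same route as the paper: both invoke McCarthy's theorem that $C(g)$ is virtually infinite cyclic for $S$ without punctures and then conclude from the infinite order of $g$. The extra sketch of virtual cyclicity via the action on the Thurston boundary and the stretch-factor homomorphism is a sound elaboration, but not needed once McCarthy is quoted.
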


\begin{proof}
In \cite{mccarthy} it is proved that for $S$ with no punctures, $C(g)$ is virtually infinite cyclic. 
Since $g$ is not torsion, the quotient $C(g)/g$ is finite. 
\end{proof}


 
Now we recall briefly the classification of elements in $MCG(S)$. 
For an extensive discussion consult \cite{farb.margalit}.




Let $c_1$ and $c_2$ be two isotopy classes of simple loops in $S$. 
By $i(c_1,c_2)$ we denote the geometric intersection number, i.e., $\min\{|\gamma_1 \cap \gamma_2| \colon \gamma_i~\text{represents}~c_i~\text{for}~i=1,2\}$. 
A simple loop is called \textbf{essential} if it is not contractible and not homotopic to a boundary loop. 
An element $g \in MCG(S)$ is called \textbf{reducible} if there is a non-empty set $C = \{c_1,\ldots,c_n\}$ of isotopy classes
of essential simple loops in $S$ so that $i(c_i,c_j)=0$ for all $i$ and $j$ and which is $g$-invariant (as a set).
Such a set $C$ is called a reduction system for $g$.
  
The reduction system for $g$ is maximal if it is not a proper subset of any other reduction set for $g$. 
Note that there may be many maximal reduction systems. However, we can define the unique reduction system by taking the intersection of all maximal reduction systems. This intersection is called the \textbf{canonical reduction system}. 
The canonical reduction system is not necessary maximal. 
By convention, if the element $g \in MCG(S)$ is not reducible, then the canonical reduction system for $g$ is empty.

\begin{lem}\label{lem:crs.fixed}
Let $g \in MCG(S)$ and let $CRS_g$ be its canonical reduction system. Let $h \in C(g)$. Then $h(CRS_g) = CRS_g$.
\end{lem}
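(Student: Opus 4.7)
The plan is to show that the action of $h$ on isotopy classes of essential simple loops permutes the collection of maximal reduction systems for $g$; the conclusion then follows immediately by taking intersections.

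First I would verify the key invariance property at the level of arbitrary reduction systems: if $C = \{c_1,\ldots,c_n\}$ is a reduction system for $g$, then $h(C) := \{h(c_1),\ldots,h(c_n)\}$ is again a reduction system for $g$. Two things need checking. (a) The pairwise geometric intersection numbers vanish, because mapping classes preserve geometric intersection numbers, so $i(h(c_i),h(c_j)) = i(c_i,c_j) = 0$. (b) The set $h(C)$ is $g$-invariant: using $hg = gh$ together with $g(C) = C$, we compute $g(h(C)) = h(g(C)) = h(C)$. Essentiality and nonemptiness are preserved because $h$ is a self-homeomorphism up to isotopy.

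Next, since $h^{-1} \in C(g)$ as well, the same argument applied to $h^{-1}$ shows that $h$ acts as a bijection on the collection of reduction systems for $g$. This bijection manifestly preserves inclusion, so it restricts to a bijection on the subcollection of \emph{maximal} reduction systems.

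Finally, $CRS_g$ is by definition the intersection of all maximal reduction systems for $g$, so
\[
h(CRS_g) \;=\; h\!\Big(\bigcap_{C\ \text{max}} C\Big) \;=\; \bigcap_{C\ \text{max}} h(C) \;=\; \bigcap_{C'\ \text{max}} C' \;=\; CRS_g,
\]
using in the middle step that $h$ is a bijection of the underlying set of isotopy classes (so it commutes with arbitrary intersections), and in the second-to-last step that $C \mapsto h(C)$ permutes the maximal reduction systems. If $g$ is irreducible, then $CRS_g = \emptyset$ and the statement is vacuous. There is no real obstacle here; the only subtlety is checking carefully that a mapping class preserves essentiality, pairwise disjointness, and geometric intersection numbers of isotopy classes, all of which are standard.
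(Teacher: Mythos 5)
Your argument is correct and is precisely the approach the paper takes, which it states tersely ("it is easy to see that $h$ acts on the set of all maximal reduction systems; since $CRS_g$ is the intersection of all of them, it is fixed by $h$"). You have simply filled in the routine details — that $h$ preserves geometric intersection numbers, that $hg=gh$ gives $g$-invariance of $h(C)$, and that $h^{-1}\in C(g)$ yields bijectivity — which is exactly what the paper leaves to the reader.
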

\begin{proof}
It is easy to see that $h$ acts on the set of all maximal reduction systems. 
Since $CRS_g$ is the intersection of all of them, it is fixed by $h$. 
\end{proof}

Now let us describe the canonical form of an element of a mapping class group. 
Let $g \in MCG(S)$ and let $CRS_g = \{c_1,\ldots,c_m\}$ be its canonical reduction system.
Choose pairwise disjoint representatives of the classes $c_i$ together with pairwise disjoint closed annuli $R_1,\ldots,R_m$, 
where $R_i$ is a closed tubular neighborhood of the representative of $c_i$.
Let $R_{m+1},\ldots,R_{m+p}$ be closures of connected components of $S - \bigcup_{i=1}^m R_i$.
Then there is a representative $\psi \in Homeo^+(S)$ of $g$ and $k$ such that $\psi^k(R_i) = R_i$ for $ 1 \leq i \leq m+p$.
Moreover, if $\psi^k_{R_i}$ denotes the restriction of $\psi^k$ to $R_i$, then $\psi^k_{R_i}$ is a power of Dehn twist for $1 \leq i \leq m$ and
pseudo-Anosov or identity for $m+1 \leq i \leq m+p$.

Let us now use this decomposition to give a description of the centraliser. 
Let $h \in C(g)$. The elements of $CRS_g$ are permuted by $h$. 
Let $C_o(g)$ be the finite index subgroup of $C(g)$ consisting of elements which fix $CRS_g$ pointwise. 
If $h \in C_o(g)$, then there exists a representative $\psi_h$ of $h$, such that $h(R_i) = R_i$ for $1 \leq i \leq m$.
Then $\psi_h$ permutes $R_i$ for $m+1 \leq i \leq m+p$. 
Let $C_{oo}(g)$ be the finite index subgroup of $C_o(g)$ consisting of those elements $h$ which have a representative $\psi_h$ 
such that $\psi_h(R_i) = R_i$ for all $i$. 
Note that from the definition of $k$ from the previous paragraph, $g^k \in C_{oo}(g)$. 

We have the cutting homomorphism (\cite[Prop.3.20]{farb.margalit})
$$
C_{oo}(g) \xrightarrow{\zeta_i} MCG(R_i),
$$
where $\zeta_i(h)$ is the isotopy class of the restriction of $\psi_h$ to $R_i$. 
Let $\psi^k_g$ be a homeomorphism representing $g^k$ and fixing all the subsurfaces $R_i$. 
Denote by $g^k_{R_i}$ the isotopy class of $\psi^k_g$ restricted to $R_i$.
We have that $\zeta_i(g^k) = g^k_{R_i}$. 
Moreover, if $h$ commutes with $g$, then it commutes with $g^k$ and therefore $\zeta_i(h)$ commutes with $\zeta_i(g^k) = g^k_{R_i}$.
Thus $\zeta_i$ ranges in $C(g^k_{R_i}) < MCG(R_i)$.
If we sum up all these cutting homomorphism, we get an inclusion
$$
C_{oo}(g) \xrightarrow{\phantom{a}\zeta\phantom{a}} \bigoplus_{i=1}^{i=m+p} C(g^k_{R_i}).
$$

Indeed, if $\zeta(h)$ is trivial, it means that it is isotopic to the identity separately on each annulus $R_i$. 
These isotopies fix the boundaries of $R_i$ pointwise, thus one can compose them together to an 
isotopy on $S$.

\begin{thm}
Let $S = S^s_{g,r}$. Then $MCG(S)$ belongs to the class $E(\IQ)$.

Moreover, $MCG(S)$ satisfies Conjecture~\ref{conjdsf89023}.
\end{thm}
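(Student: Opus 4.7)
First, I would apply the Birman reduction already spelled out: via the Birman exact sequence and closure of $E(\IQ)$ and of the class satisfying Conjecture~\ref{conjdsf89023} under extensions, I may assume $S$ has no punctures. Finiteness of $\hd_\IQ(MCG(S))$ has already been established, so the entire remaining task is to show that for every infinite-order $g \in MCG(S)$ the reduced centralizer $C(g)/g$ has both finite rational homological dimension and finite asymptotic dimension. By Nielsen--Thurston, such a $g$ is either pseudo-Anosov or reducible, and the pseudo-Anosov case is dispatched immediately by Lemma~\ref{L:pA}, which asserts that $C(g)/g$ is finite.

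For reducible $g$, I invoke the canonical reduction system $CRS_g = \{c_1,\ldots,c_m\}$, the tubular annuli $R_1,\ldots,R_m$, and the complementary pieces $R_{m+1},\ldots,R_{m+p}$, together with the finite-index subgroup $C_{oo}(g) < C(g)$ and the cutting homomorphism $\zeta\colon C_{oo}(g) \hookrightarrow \bigoplus_{i=1}^{m+p} C(g^k_{R_i})$ constructed immediately before the theorem; under $\zeta$ the element $g^k \in C_{oo}(g)$ maps to the tuple $(g^k_{R_1},\ldots,g^k_{R_{m+p}})$. The strategy is to establish finite $\hd_\IQ$ and $\asdim$ for the codomain of $\zeta$ modulo this tuple, and then transfer the finiteness backwards through (i) the injection $\zeta$, (ii) the finite-index inclusion $C_{oo}(g)/\langle g^k\rangle \le C(g)/\langle g^k\rangle$ (Lemmas~\ref{lemjsd9823ds} and~\ref{lem:asdim}), and (iii) the finite-kernel quotient $C(g)/\langle g^k\rangle \twoheadrightarrow C(g)/g$ (Lemmas~\ref{lem:powers} and~\ref{lem:asdim}).

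The core input is a factor-by-factor analysis of $C(g^k_{R_i})/g^k_{R_i}$. For annular $R_i$ one has $MCG(R_i) \cong \IZ$ and the quotient is finite cyclic. For non-annular $R_i$ with $g^k_{R_i}$ pseudo-Anosov, Lemma~\ref{L:pA} applied to the unpunctured surface $R_i$ again yields a finite quotient. For non-annular $R_i$ on which $g^k$ acts trivially, the reduced centralizer is all of $MCG(R_i)$; finite $\hd_\IQ$ here follows from the same Harer-type argument used for $MCG(S)$, and finite $\asdim$ follows either from the Bestvina--Bromberg theorem on asymptotic dimension of mapping class groups or, equivalently, by induction on the complexity $|\chi(S)|$, with the present theorem furnishing the inductive hypothesis for the strictly simpler subsurface $R_i$. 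Given these inputs, the finite direct sum $\bigoplus_i C(g^k_{R_i})/g^k_{R_i}$ inherits both finiteness properties (Lemmas~\ref{lem:hd.eq.seq} and~\ref{lem:asdim}), and the intermediate quotient $\bigoplus_i C(g^k_{R_i})/\langle (g^k_{R_1},\ldots,g^k_{R_{m+p}})\rangle$ differs from it only by a finitely generated abelian kernel (a quotient of $\IZ^{m+p}$ by a diagonally embedded $\IZ$), which preserves both finiteness properties.

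The main obstacle I anticipate is precisely the identity case in the factor-by-factor analysis: one must genuinely know that $MCG(R_i)$ for a proper essential subsurface $R_i$ carries both finite $\hd_\IQ$ and finite $\asdim$. The $\hd_\IQ$ side is routine from Harer, but handling $\asdim$ either requires appealing to the (highly nontrivial) Bestvina--Bromberg--Fujiwara / Behrstock--Hagen--Sisto results, or organising the whole argument as an induction on $|\chi(S)|$; in the inductive route one must carefully verify that every $R_i$ appearing is strictly simpler than $S$ and that the low-complexity base cases (torus, annulus, small-genus surfaces with few boundary components) are settled directly using the explicit descriptions of their mapping class groups.
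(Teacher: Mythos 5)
Your proposal is correct and follows essentially the same argument as the paper: the Birman reduction to closed surfaces, Nielsen--Thurston and the canonical reduction system, the cutting homomorphism $\zeta$ into $\bigoplus_i C(g^k_{R_i})$, the annulus/pseudo-Anosov/identity trichotomy for the pieces, and the final transfer through finite-index (Lemma~\ref{lemjsd9823ds}, Lemma~\ref{lem:asdim}) and finite-kernel (Lemma~\ref{lem:powers}) steps. The only cosmetic difference is in how the abelian error term is packaged: the paper works with the exact sequence $1 \to \langle g^k_{R_1},\ldots,g^k_{R_{m+p}}\rangle/g^k \to C_{oo}(g)/g^k \to \bigoplus_i C(g^k_{R_i})/g^k_{R_i}$ and invokes Lemma~\ref{lem:hd.eq.seq}, whereas you enlarge the target and use the subgroup property; these are equivalent. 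One minor caution about the alternative ``induction on $|\chi(S)|$'' you float for the asymptotic dimension of $MCG(R_i)$: as you yourself sense, it cannot be run off the statement of the present theorem alone, since the theorem only concerns reduced centralizers, not $\asdim(MCG)$ itself. One would have to strengthen the inductive hypothesis to include finiteness of $\asdim(MCG)$, verify all base cases, and reprove BBF in the process --- so citing \cite{mcg.bbf}, as the paper does and as you propose as the primary route, is the right call.
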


\begin{proof}

We keep the notation from the discussion preceding the theorem.
By the reduction step we can assume that $S$ has no punctures.
 
Note that each element $g^k_{R_i}$ (seen as an element of $MCG(S)$) belongs to $C_{oo}(g)$. 
Consider the inclusion $\zeta \colon C_{oo}(g) \to \bigoplus_{i=1}^{i=m+p} C(g^k_{R_i})$.
If we reduce each centralizer in the sum on the right we obtain the following exact sequence:

$$
1 \to \langle g^k_{R_1},\ldots, g^k_{R_{m+p}} \rangle \to C_{oo}(g) \xrightarrow{\zeta'} \bigoplus_{i=1}^{i=m+p} C(g^k_{R_i})/g^k_{R_i}.
$$

Since $g^k = g^k_{R_1} \ldots g^k_{R_{m+p}}$, it follows that $\zeta'(g^k) = 1$. We finally get the following:

$$
1 \to \langle g^k_{R_1},\ldots,g^k_{R_{m+p}} \rangle/g^k \to C_{oo}(g)/g^k \xrightarrow{\zeta''} \bigoplus_{i=1}^{i=m+p} C(g^k_{R_i})/g^k_{R_i}.
$$

The group $\langle g^k_{R_1},\ldots,g^k_{R_{m+p}} \rangle/g^k$ is abelian, thus has finite rational homological dimension. 
To prove that $\hd_{\IQ}(C_{oo}(g)/g^k)$ is finite, it is enough to prove that for each $i$ the
reduced centralizer $C(g^k_{R_i})/g^k_{R_i}$ has finite rational homological dimension.
Indeed, although $\zeta''$ is not onto, we can substitute $\bigoplus_{i=1}^{i=m+p} C(g^k_{R_i})/g^k_{R_i}$ by the image of $(\zeta'')$ 
and use Lemma~\ref{lem:hd.eq.seq}.

Let us now show that the quotient $C(g^k_{R_i})/g^k_{R_i}$ has finite rational homological dimension.
If $1 \leq i \leq m$, then $R_i$ is the annulus, so $MCG(R_i) = \IZ$ and $C(g^k_{R_i})/g^k_{R_i}$ is finite.
For $m+1 \leq i \leq m+p$ we know that the element $g^k_{R_i}$ is either pseudo-Anosov or identity. 
If it is pseudo-Anosov, then $C(g^k_{R_i})/g^k_{R_i}$ is finite (Lemma~\ref{L:pA}).
If it is the identity, then we have $C(g^k_{R_i})/g^k_{R_i} = MCG(R_i)$.
In all this cases we get a group with finite rational homological dimension.

It means that $C(g)/g^k$ contains a finite index subgroup $C_{oo}(g)/g^k$ whose rational homological dimension is finite. 
Thus finally by Lemma~\ref{lem:powers} and Lemma~\ref{lemjsd9823ds}, $C(g)/g$ has finite rational homological dimension.

The proof of Conjecture~\ref{conjdsf89023} goes along the same lines. The crucial fact to start the arguments is that the mapping class groups have finite asymptotic dimension \cite{mcg.bbf}.
\end{proof}

\subsection{Discrete subgroups of almost connected Lie groups}

We call a Lie group almost connected, if it has finitely many connected components.

Ji \cite[Corollary 3.4]{ji_arithmetic} showed that finitely generated, discrete subgroups of connected Lie groups have finite asymptotic dimension (see Carlsson--Goldfarb \cite[Corollary~3.6]{carlsson_goldfarb} for the special case that cocompact lattices in connected Lie groups have finite asymptotic dimension). Standard techniques allow us to generalize Ji's result:

\begin{thm}[Generalizing Ji {\cite[Corollary 3.4]{ji_arithmetic}}]\label{thm32d23}
Discrete subgroups of almost connected Lie groups have finite asymptotic dimension.
\end{thm}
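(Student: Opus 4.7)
The plan is to reduce to Ji's result via two standard steps.

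\emph{Step 1: Reduction to the connected case.} Let $G_0 \trianglelefteq G$ be the identity component, and let $\Gamma \le G$ be a discrete subgroup. The intersection $\Gamma_0 := \Gamma \cap G_0$ is discrete in $G_0$, and since $G/G_0$ is finite, the induced injection $\Gamma/\Gamma_0 \hookrightarrow G/G_0$ forces $[\Gamma : \Gamma_0] < \infty$. By Lemma~\ref{lem:asdim}(1) we have $\asdim(\Gamma) = \asdim(\Gamma_0)$, so we may assume $G$ is connected.

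\emph{Step 2: Reduction to finitely generated subgroups and passage to the union.} Second countability of $G$ forces the discrete subgroup $\Gamma$ to be countable, so we write it as a directed union $\Gamma = \bigcup_{n \in \IN} H_n$ of finitely generated subgroups $H_1 \le H_2 \le \cdots$. Ji's Corollary~3.4 in \cite{ji_arithmetic} applies to each $H_n$ and gives $\asdim(H_n) < \infty$. Inspecting Ji's proof, which is driven by the Levi and Iwasawa decompositions of $G$, one extracts a uniform bound $\asdim(H_n) \le N = N(G)$ depending only on invariants of the ambient Lie group (such as $\dim G$ and the dimensions appearing in these decompositions). Equip $\Gamma$ with the restriction $d_G|_\Gamma$ of a left-invariant Riemannian metric on $G$; this is a proper left-invariant metric on $\Gamma$ (closed $G$-balls meet the discrete set $\Gamma$ in finite sets) and is coarsely equivalent to any intrinsic proper left-invariant metric on $\Gamma$. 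The infinite union theorem for asymptotic dimension \cite[Section~14]{bell_dran} then yields $\asdim(\Gamma) \le N < \infty$.

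An alternative route bypasses the uniformity question altogether: the inclusion $(\Gamma, d_G|_\Gamma) \hookrightarrow (G, d_G)$ is a coarse embedding of proper metric spaces, so $\asdim(\Gamma) \le \asdim(G)$ by monotonicity of asymptotic dimension. Finiteness of $\asdim(G)$ for a connected Lie group is standard: via Levi decomposition $G = R \rtimes L$ together with the continuous analogue of Lemma~\ref{lem:asdim}(3), one reduces to simply connected solvable Lie groups (handled inductively by the derived series, each successive quotient being coarsely Euclidean) and connected semisimple Lie groups (handled via the Iwasawa decomposition $G = KAN$, which exhibits $G$ as coarsely equivalent to the solvable group $AN$).

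The principal obstacle is the uniform bound in Ji's theorem for the first approach, or the extension-type inequality for $\asdim$ of locally compact groups in the second; in both cases one must track the structural data of $G$ through the relevant decompositions and verify that the various proper left-invariant metrics involved agree up to coarse equivalence.
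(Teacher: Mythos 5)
Your Step 1 is exactly the paper's reduction, and your second route (coarse embedding) is genuinely different from, and arguably cleaner than, what the paper does. The paper's argument is: for any connected $G$ and maximal compact $K$, the homogeneous space $K\backslash G$ has finite asymptotic dimension (this is what the Carlsson--Goldfarb argument gives), and any finitely generated discrete subgroup $F<G$ acts properly isometrically on $K\backslash G$, so $\asdim(F)\le\asdim(K\backslash G)$; then the identity $\asdim(\Gamma)=\sup_{F\,\mathrm{f.g.}}\asdim(F)$ of Dranishnikov--Smith \cite[Theorem 2.1]{ds} promotes this uniform bound to arbitrary discrete $\Gamma$. Thus the uniform bound you need in your first variant does not require ``inspecting Ji's proof'' for constants tied to Levi and Iwasawa data --- it falls out immediately as $\asdim(K\backslash G)$, since that is the same target space regardless of which finitely generated subgroup you take. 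As stated, that part of your Step 2 is vague on precisely the point that the paper resolves cleanly; you would do better to invoke $K\backslash G$ directly.

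Your alternative route sidesteps the finitely generated reduction altogether: the inclusion of $\Gamma$ (with the restricted left-invariant Riemannian metric) into $G$ is a coarse embedding, hence $\asdim(\Gamma)\le\asdim(G)=\asdim(K\backslash G)<\infty$, where the middle equality holds because $K$ is compact. This is correct, and it is in fact simpler than what the paper writes, because the paper's restriction to finitely generated subgroups is an artefact of citing \cite[Proposition 2.3]{ji_arithmetic} (which is phrased for finitely generated groups) rather than a logical necessity: a proper isometric action of any countable discrete group on a proper metric space already gives the coarse embedding and hence the $\asdim$ inequality. The trade-off is that you then owe a direct proof that connected Lie groups, or equivalently $K\backslash G$, have finite asymptotic dimension; your sketch via the derived series and Iwasawa is standard but does need the extension inequality for $\asdim$ in the topological-group setting, whereas the paper simply cites the Carlsson--Goldfarb computation. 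Either route closes the proof; yours buys a more elementary endgame at the cost of rebuilding the structural computation the paper imports.
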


\begin{proof}
Let us first recall the argument from \cite[Corollary 3.4]{ji_arithmetic} that finitely generated, discrete subgroups of connected Lie groups have finite asymptotic dimension: if $G$ is a connected Lie group and $K$ a maximal compact subgroup in $G$, then the homogeneous space $K \backslash G$ (endowed with a $G$-invariant Riemannian metric) satisfies $\operatorname{as-dim}(K \backslash G) < \infty$. This follows from the proof of \cite[Corollary~3.6]{carlsson_goldfarb}. Now if $\Gamma$ is a finitely generated, discrete subgroups of $G$, then it acts isometrically and properly on $K \backslash G$. Therefore, by \cite[Proposition 2.3]{ji_arithmetic}, we get $\asdim(\Gamma) \le \asdim(K \backslash G)$.

Now let $\Gamma$ be an arbitrary discrete subgroup of $G$. By \cite[Theorem 2.1]{ds} we have
\[\asdim(\Gamma) = \sup_{F \le_{\mathrm{f.g.}} \Gamma} \asdim(F),\]
where the supremum is taken over all finitely generated subgroups $F$ of $\Gamma$. But such an $F$ is a finitely generated, discrete subgroup of $G$, and therefore $\asdim(F) \le \asdim(K \backslash G)$. So we conclude $\asdim(\Gamma) \le \asdim(K \backslash G)$.

If $G$ is almost connected, we may pass to its identity component $G_0$ and also pass to the subgroup $\Gamma \cap G_0$ of $\Gamma$. For $\Gamma \cap G_0$ the above arguments apply and show $\asdim(\Gamma \cap G_0) < \infty$. Therefore $\asdim(\Gamma) < \infty$ since $\Gamma \cap G_0$ is a finite index subgroup of $\Gamma$.
\end{proof}

\begin{thm}\label{thmsdf8912123}
Discrete subgroups of almost connected Lie groups satisfy the Conjecture~\ref{conj:njdsf89}.
\end{thm}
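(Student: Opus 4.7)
The plan is to build a finite-dimensional model for $\EGammaunder$ out of the Lie group geometry and then invoke Lemma~\ref{sdf23fdjn23}. Let $G$ be an almost connected Lie group and $\Gamma < G$ a discrete subgroup. By the Iwasawa--Malcev theorem $G$ admits a maximal compact subgroup $K$, and the homogeneous space $X := K\backslash G$ is diffeomorphic to a Euclidean space (in particular contractible and of finite dimension $n$). The right translation action of $\Gamma$ on $X$ is isometric with respect to a $G$-invariant Riemannian metric and, since $\Gamma$ is discrete in $G$ and $K$ is compact, the action is proper with finite isotropy groups (they are compact subgroups of $G$ intersected with the discrete set $\Gamma$). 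Hence $X$ is a model for $\EGammaunder$ of dimension $n < \infty$.

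Applying Lemma~\ref{sdf23fdjn23} to this model gives $\hd_\IQ(\Gamma) \le \gdunder(\Gamma) \le n < \infty$, which establishes part (1) of Conjecture~\ref{conj:njdsf89}. If, in addition, $\Gamma$ is torsion-free, then the finite isotropy groups from the previous paragraph are all trivial, so the action of $\Gamma$ on $X$ is free. Consequently $X$ is a model for $E\Gamma$ and $X/\Gamma$ is a finite-dimensional $B\Gamma$, so $\cd(\Gamma) \le n$ by the second part of Lemma~\ref{jnsdu23}, and then $\hd(\Gamma) \le \cd(\Gamma) < \infty$ by Lemma~\ref{lem:efs000}, establishing part (2).

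One subtle point I would need to justify carefully is the reduction to the almost connected case: $\Gamma$ is only assumed to be a discrete subgroup of an almost connected Lie group, not necessarily finitely generated, whereas Ji's asymptotic-dimension statement (Theorem~\ref{thm32d23}) required a reduction via finitely generated subgroups. For the homological-dimension statement here no such reduction is needed, because the geometric argument above works uniformly for the whole subgroup $\Gamma$; any discrete subgroup of $G$ acts on the same space $X$. This is the main conceptual advantage over the asymptotic-dimension argument, and I expect no serious obstacle beyond verifying that properness of the action of $\Gamma$ on $X$ does not require finite generation of $\Gamma$, which follows from the fact that properness of a group action on a proper metric space is tested on the ambient Lie group.
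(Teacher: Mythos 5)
Your argument follows essentially the same route as the paper's: both use the homogeneous space $X = K\backslash G$ (which is diffeomorphic to Euclidean space by Iwasawa--Malcev) as a finite-dimensional model for $\EGammaunder$ and then invoke the bound $\hd_\IQ(\Gamma) \le \gdunder(\Gamma)$ from Lemma~\ref{sdf23fdjn23} for the general case, and the freeness of the action for the torsion-free case. Your observation that finite generation plays no role here (in contrast with the asymptotic-dimension argument in Theorem~\ref{thm32d23}) is correct and matches the paper.

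There is, however, a gap in your justification for the key step ``Hence $X$ is a model for $\EGammaunder$.'' You only verify that $\Gamma$ acts properly on the contractible, finite-dimensional space $X$ with finite isotropy groups. That alone does not make $X$ a model for $\EGammaunder$: you must also know that the fixed-point set $X^H$ is contractible (rather than merely nonempty) for every finite subgroup $H < \Gamma$. This is exactly the nontrivial content of Abels' theorem, which the paper cites through \cite[Section 2]{baum_connes_higson}. For $G$ semisimple one could argue via nonpositive curvature that $X^H$ is a totally geodesic Hadamard submanifold, but for a general almost connected $G$ the space $K\backslash G$ is not a Hadamard manifold, and one really needs Abels' result. (Alternatively, for the rational bound one can bypass the $\EGammaunder$ formalism entirely: the cellular chain complex $C_\ast(X;\IQ)$ of a $\Gamma$-CW structure on $X$ consists of projective $\IQ\Gamma$-modules because the isotropy groups are finite and $\IQ$ has characteristic zero, so contractibility of $X$ alone already gives $\cd_\IQ(\Gamma) \le \dim X$; but as written your proof does not take this route.) The torsion-free half of your argument is unaffected by this issue, since there the action is free and only contractibility of $X$ itself is needed.
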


\begin{proof}
In \cite[Section 2]{baum_connes_higson} it is discussed that if $G$ is an almost connected Lie group, then $K \backslash G$ (for $K$ a maximal compact subgroup of $G$) is a model for $\underline{E G}$ (see \cite{abels}). Furthermore, if $\Gamma$ is any discrete subgroup of $G$, then $K \backslash G$ is also a model for $\underline{E \Gamma}$. It follows that the proper geometric dimension $\gdunder(\Gamma)$ of discrete subgroups of almost connected Lie groups is finite. By \cite[Theorem 2]{bln} we know that $\cd_\IQ(\Gamma) \le \gdunder(\Gamma)$ for any group $\Gamma$. With Lemma~\ref{lem:efs000} we therefore get $\hd_\IQ(\Gamma) \le \gdunder(\Gamma)$ and conclude that $\hd_\IQ(\Gamma)$ is finite for discrete subgroups of almost connected Lie groups.

So it remains to treat the torsion-free case. By \cite[Theorem 11]{malcev} (see also \cite[Theorem 6]{iwasawa}) we know that the quotient $K \backslash G$ of an almost connected Lie group $G$ by a maximal compact subgroup $K$ is diffeomorphic to Euclidean space (although in the cited theorems it is stated that it is homeomorphic to Euclidean space, the proofs actually show that it is diffeomorphic to it). If $\Gamma$ is now a discrete, torsion-free subgroup of $G$, then $K \backslash G / \Gamma$ is a smooth manifold whose universal cover is contractible, i.e., it is a model for the classifying space $B \Gamma$. We conclude that $\hd(\Gamma) < \infty$.
\end{proof}

Eckmann showed \cite{eckmann} that the Burghelea conjecture holds for linear groups over fields of characteristic zero and with finite rational homological dimension. The arguments used in the above proof of Theorem~\ref{thmsdf8912123} show that discrete subgroups of almost connected, linear Lie groups satisfy this assumption. So such groups satisfy the Burghelea conjecture. We will show now that we can actually drop the linearity assumption:

\begin{thm}\label{thmaaa003}
Discrete subgroups of almost connected Lie groups lie in the class $E(\IQ)$ and therefore satisfy the Burghelea conjecture.
\end{thm}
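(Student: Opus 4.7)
The plan is to show $\Gamma \in E(\mathbb{Q})$ by realising it as a central extension of a linear group by a finitely generated abelian group, and then to invoke Eckmann's theorem for linear groups (Remark~\ref{jkdsf9023}) together with the Chadha--Passi closure of $E(\mathbb{Q})$ under extensions. Since we already know $\hd_{\IQ}(\Gamma) < \infty$ from Theorem~\ref{thmsdf8912123}, establishing $\Gamma \in E(\IQ)$ is equivalent to controlling the reduced centralizers, which the extension approach handles uniformly.

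First I would reduce to the case where $G$ is connected: because $[G:G_0]$ is finite, $\Gamma \cap G_0$ has finite index in $\Gamma$, and by Lemma~\ref{lemjsd9823ds} together with the closure of $E(\IQ)$ under finite-index extensions it suffices to treat $\Gamma \cap G_0 \le G_0$. Then I would use the adjoint representation $\mathrm{Ad}\colon G \to GL(\mathfrak{g})$, whose kernel for $G$ connected equals $Z(G)$. This produces a central short exact sequence
\[
1 \to \Gamma \cap Z(G) \to \Gamma \to \mathrm{Ad}(\Gamma) \to 1,
\]
where $\mathrm{Ad}(\Gamma) \hookrightarrow \mathrm{Ad}(G) = G/Z(G) \subset GL(\mathfrak{g})$, so that $\mathrm{Ad}(\Gamma)$ is linear over $\mathbb{C}$ and $\mathrm{Ad}(G)$ is an almost connected linear Lie group.

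The kernel $\Gamma \cap Z(G)$ is a discrete subgroup of the abelian Lie group $Z(G)$. Using the structure theorem for discrete subgroups of abelian Lie groups, together with the fact that the component group of $Z(G)$ behaves well because $G$ is almost connected, this kernel is finitely generated abelian and hence polycyclic; by Eckmann's theorem it lies in $E(\IQ)$. For the quotient I would verify that $\mathrm{Ad}(\Gamma)$ is a discrete subgroup of the almost connected Lie group $\mathrm{Ad}(G)$. Granted this discreteness, Theorem~\ref{thmsdf8912123} applied to $\mathrm{Ad}(\Gamma) \le \mathrm{Ad}(G)$ gives $\hd_{\IQ}(\mathrm{Ad}(\Gamma)) < \infty$, and then Eckmann's theorem (linearity in characteristic zero plus finite rational homological dimension) places $\mathrm{Ad}(\Gamma)$ in $E(\IQ)$. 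Finally the Chadha--Passi closure of $E(\IQ)$ under group extensions gives $\Gamma \in E(\IQ)$, and hence by definition $\Gamma$ satisfies the Burghelea conjecture.

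The main obstacle I anticipate is precisely the discreteness of $\mathrm{Ad}(\Gamma)$ in $\mathrm{Ad}(G)$, equivalently the closedness of $\Gamma \cdot Z(G)$ in $G$. In full generality the image of a discrete subgroup of a Lie group under a quotient by a closed normal subgroup need not be discrete (for instance when $\Gamma \cdot Z(G)$ is dense), so a purely abstract ``discrete $+$ closed $=$ discrete'' argument does not work here. Overcoming this point will require genuine Lie-theoretic input: either using that $Z(G)$ appears specifically as $\ker(\mathrm{Ad})$, or passing through a further finite-index subgroup where the discreteness becomes automatic, so that the extension-based argument above can be applied.
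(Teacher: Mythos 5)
Your own diagnosis of the obstacle is correct, and it is fatal to the plan as written.  The image of a discrete subgroup $\Gamma \le G$ in $G/Z(G)$ need not be discrete: $\Gamma \cdot Z(G)$ can fail to be closed when $Z(G)$ is noncompact (which it typically is — think of $\widetilde{SL_2(\IR)}$, whose center is $\IZ$, or of groups with $\IR$-factors in the center).  Without discreteness you cannot apply Theorem~\ref{thmsdf8912123} to $\mathrm{Ad}(\Gamma)$, and there is no obvious ``pass to a finite-index subgroup'' fix, because the failure of discreteness is not a finite-index phenomenon.  So the central extension $1 \to \Gamma \cap Z(G) \to \Gamma \to \mathrm{Ad}(\Gamma) \to 1$ together with Chadha--Passi closure does not go through.

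The paper sidesteps this by never quotienting the ambient Lie group in a way that could destroy discreteness of $\Gamma$.  It instead uses the solv-radical $R$ of $G$, giving $0 \to R \to G \to G/R \to 0$ with $G/R$ semi-simple, and works directly with the \emph{centralizer} $C_g \le \Gamma$ rather than with $\Gamma$ itself.  Since $C_g$ is discrete in $G$, the groups $C_g \cap R$ and $C_g/(C_g \cap R)$ are automatically discrete in $R$ and $G/R$ respectively (the latter because $C_g/(C_g\cap R)$ is a subgroup of $\Gamma/(\Gamma\cap R)$, and one only needs $\hd_\IQ$-finiteness, which follows via Theorem~\ref{thmsdf8912123} for the discrete subgroup of $G/R$, together with Lemma~\ref{lem:hd.eq.seq}).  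The argument then does a case analysis on where $g$ lands (in $R$, some power in $R$, or $\langle g\rangle \cap R = \emptyset$) precisely so that $\langle g\rangle$ can be removed on the correct side of the extension.  Within each piece the same strategy recurses: Lemma~\ref{lemjnsdf234t3} (solvable case) uses the nil-radical and Eckmann's~[Theorem~2.2], and Lemma~\ref{lemjns4t3} (semi-simple case) uses the center and Eckmann's~[Theorem~2.4'] — this is where linearity finally enters, after $Z$ has been peeled off and $S/Z$ is guaranteed to be a matrix group.  In short, your idea of ultimately reducing to Eckmann's linear-group theorem is aligned with the paper, but the paper reaches linearity only after a structural decomposition that keeps every group in sight discrete; quotienting $\Gamma$ by $\Gamma \cap Z(G)$ at the outset loses that control.
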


\begin{proof}
Let $\Gamma$ be a discrete subgroup of the almost connected Lie group $G$. From the Theorem~\ref{thmsdf8912123} we know that $\Gamma$ has finite rational homological dimension. So it remains to show that the reduced centralizers of $\Gamma$ also have finite rational homological dimension.

Note that we can assume $G$ to be connected. Indeed, if we denote by $G^\circ$ the identity component of $G$, then $\Gamma^\circ := \Gamma \cap G^\circ$ will be of finite index in $\Gamma$. So if $g \in \Gamma$ has infinite order, some power $g^n$ will be contained in $\Gamma^\circ$ and be of infinite order. The arguments below will show that the rational homological dimension of $C_{g^n}(\Gamma^\circ) / \langle g^n\rangle$ is finite, since $\Gamma^\circ$ is a discrete subgroup of the connected Lie group $G^\circ$. Since $\Gamma^\circ$ has finite index in $\Gamma$, $C_{g^n}(\Gamma^\circ)$ will have finite index in $C_{g^n}(\Gamma)$. So $C_{g^n}(\Gamma) / \langle g^n\rangle$ has finite rational homological dimension. We finish the argument with Lemma~\ref{lem:powers}.

So assume that $G$ is connected. The solv-radical $R$ of $G$ is by definition a maximal, connected, solvable, normal Lie subgroup of $G$. It exists \cite[Theorem~5.11 in I.2.§5]{solv_radical}, is unique and a closed subgroup \cite[1st paragraph of XVIII.2]{hochschild}, and has the property that the quotient $G/R$ is a connected, semi-simple Lie group \cite[2nd paragraph after proof of Theorem~5.11]{solv_radical}. We have a short exact sequence
\[0 \to R \to G \to G/R \to 0.\]

If $\Gamma$ is a discrete subgroup of $G$, then $C_g < \Gamma$ is also a discrete subgroup of $G$, where $C_g$ is the centralizer in $\Gamma$ of $g \in \Gamma$. The kernel of the homomorphism $C_g \hookrightarrow G \to G/R$ is exactly $C_g \cap R$ and we therefore get the diagram
\[\xymatrix{
0 \ar[r] & R \ar[r] & G \ar[r] & G/R \ar[r] & 0\\
0 \ar[r] & C_g \cap R \ar[r] \ar[u] & C_g \ar[r] \ar[u] & C_g / (C_g \cap R) \ar[r] \ar[u] & 0}\]

Recall that we want to show $\hd_\IQ(C_g / \langle g \rangle) < \infty$ for $g$ an element of infinite order.

\begin{description}
\item[1st case: $\bm{g \in R}$.] Note that $g$ is central in $C_g$ and therefore also in $C_g \cap R$. We get the following short exact sequence:
\[0 \to (C_g \cap R) / \langle g \rangle \to C_g / \langle g \rangle \to C_g / \langle g \rangle \big/ (C_g \cap R) / \langle g \rangle \to 0.\]
Note that $C_g / \langle g \rangle \big/ (C_g \cap R) / \langle g \rangle \cong C_g / (C_g \cap R)$ which is a discrete subgroup of the connected Lie group $G/R$. The rational homological dimension of $C_g / (C_g \cap R)$ is therefore finite by Theorem~\ref{thmsdf8912123}.

It remains to show that the rational homological dimension of $(C_g \cap R) / \langle g \rangle$ is finite. But $C_g \cap R$ is a discrete subgroup of the connected, solvable Lie group $R$ and therefore we may apply Lemma~\ref{lemjnsdf234t3} which we will prove below.

\item[2nd case: $\bm{g^n \in R}$ for $\bm{n > 1}$, but $\bm{g \notin R}$.] We go through the arguments in the 1st case to conclude that $\hd_\IQ(C_g / \langle g^n \rangle) < \infty$. From this it follows that $\hd_\IQ(C_g / \langle g \rangle) < \infty$.

\item[Remaining case: $\bm{\langle g\rangle \cap R = \emptyset}$.] >From $\langle g\rangle \cap R = \emptyset$ we conclude that the composition $C_g \cap R \to C_g \to C_g / \langle g\rangle$ is injective. Furthermore, $C_g \cap R$ is normal in $C_g / \langle g\rangle$. So we have the following short exact sequence:
\[0 \to C_g \cap R \to C_g / \langle g\rangle \to C_g / \langle g\rangle \big/ (C_g \cap R) \to 0.\]

$C_g \cap R$ is a discrete subgroup of the connected Lie group $R$ and therefore, by Theorem~\ref{thmsdf8912123}, has finite rational homological dimension.

It remains to show that $C_g / \langle g\rangle \big/ (C_g \cap R)$ has finite rational homological dimension. We have $C_g / \langle g\rangle \big/ (C_g \cap R) \cong C_g / (C_g \cap R) \big/ \langle g\rangle$ and $C_g / (C_g \cap R)$ is a discrete subgroup of the connected, semi-simple Lie group $G/R$. So we conclude with Lemma~\ref{lemjns4t3} which we will prove further below.
\end{description}

So the proof of Theorem~\ref{thmaaa003} is now complete since the above three cases cover every possible case.
\end{proof}

\begin{lem}\label{lemjnsdf234t3}
Let $R$ be a connected, solvable Lie group and let $\Gamma < R$ be a discrete subgroup with a central element $g \in \Gamma$ of infinite order.

Then $\hd_{\IQ}(\Gamma / \langle g \rangle) < \infty$.
\end{lem}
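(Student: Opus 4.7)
The plan is to reduce the statement to the theory of elementary amenable groups of finite Hirsch length, which is already summarized in Section~\ref{secnjkwe9023}. Schematically: $\Gamma$ is solvable with finite Hirsch length bounded by $\dim R$, the quotient $\Gamma/\langle g\rangle$ inherits both properties, and then Flores--Nucinkis gives $\hd_\IQ(\Gamma/\langle g\rangle) \le h(\Gamma/\langle g\rangle) < \infty$.

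First I would observe that since $R$ is solvable as a Lie group, every abstract subgroup is solvable as a group; in particular $\Gamma$ is a countable solvable group, hence elementary amenable. Next I would establish that the Hirsch length $h(\Gamma)$ is finite, bounded above by $\dim R$. The cleanest way is by induction on the derived length of $R$: pass to the (closed, connected) derived subgroup $[R,R]$, note that $R/[R,R]$ is a connected abelian Lie group and that discrete subgroups of connected abelian Lie groups are finitely generated abelian of rank at most the dimension, then combine this with additivity of Hirsch length in the short exact sequence
\[1 \to \Gamma \cap [R,R] \to \Gamma \to \Gamma/(\Gamma \cap [R,R]) \to 1,\]
where the left hand term is a discrete subgroup of $[R,R]$ and the right hand term embeds as a discrete subgroup of $R/[R,R]$. (The fact that $\Gamma \cap [R,R]$ is a discrete subgroup of the \emph{connected} Lie group $[R,R]$ is standard, using that $[R,R]$ is closed in $R$.)

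Having $h(\Gamma) < \infty$, I would then use that $g$ is central of infinite order in $\Gamma$: thus $\langle g \rangle \cong \IZ$ is normal in $\Gamma$, so the quotient $\Gamma/\langle g \rangle$ is again solvable, hence elementary amenable, and by additivity of Hirsch length we have $h(\Gamma/\langle g \rangle) = h(\Gamma) - 1 < \infty$. Finally I would invoke the Flores--Nucinkis inequality $\hd_\IQ(H) \le \underline{\mathrm{hd}}(H) = h(H)$ for elementary amenable groups of finite Hirsch length, as recalled in Section~\ref{secnjkwe9023}, applied to $H = \Gamma/\langle g \rangle$.

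The only mildly technical step is the bound $h(\Gamma) \le \dim R$, since $\Gamma$ need not be finitely generated and one cannot directly cite the finitely generated polycyclic-by-finite case (Mostow). The induction sketched above, however, only uses additivity of Hirsch length and the elementary structure of discrete subgroups of abelian Lie groups, so no genuine obstacle arises. The centrality hypothesis on $g$ is used only to ensure that $\langle g\rangle$ is a normal subgroup, making the quotient a group at all.
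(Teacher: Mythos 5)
Your route is genuinely different from the paper's. The paper passes to the nilradical $N$ of $R$ and runs a three-way case distinction according to whether $g$, a power of $g$, or no power of $g$ lies in $N$; each case is then reduced to Theorem~\ref{thmsdf8912123} for the $\hd_\IQ$ bound and to Eckmann's results for the nilpotent/abelian pieces. You instead try to bound the Hirsch length $h(\Gamma)$ and invoke Flores--Nucinkis. That strategy is attractive and, if the Hirsch length bound were in hand, it would immediately dispose of all three of the paper's cases at once; moreover it would prove the stronger fact that $\Gamma$ and $\Gamma/\langle g\rangle$ are elementary amenable of finite Hirsch length, which plugs straight into Section~\ref{secnjkwe9023}.

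However, the proof you give of $h(\Gamma) < \infty$ has a real gap. The induction step asserts that $\Gamma/(\Gamma \cap [R,R])$ ``embeds as a discrete subgroup of $R/[R,R]$.'' This is not automatic: the image of a discrete subgroup under a quotient map of Lie groups need not be discrete (project $\IZ^2 \subset \IR^2$ onto a line of irrational slope). And without discreteness the conclusion fails, because an \emph{abstract} subgroup of a connected abelian Lie group can have infinite torsion-free rank and hence infinite Hirsch length -- for example $\bigoplus_{\IN} \IQ$ embeds abstractly into $\IR$. So the inductive step does not close as written. It is also worth flagging your aside that ``one cannot directly cite the finitely generated polycyclic-by-finite case (Mostow)'': in fact this is exactly what one \emph{should} cite, since a classical theorem (Mostow; see Raghunathan, \emph{Discrete Subgroups of Lie Groups}, Ch.~3) asserts that every discrete subgroup of a connected solvable Lie group is polycyclic, in particular finitely generated of finite Hirsch length. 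Invoking that result at the outset makes the induction unnecessary and repairs your proof in one stroke. An alternative repair that avoids Mostow: lift $\Gamma$ to a torsion-free discrete subgroup $\widetilde\Gamma$ of the (contractible) universal cover $\widetilde R$, so $\cd(\widetilde\Gamma) \le \dim R$; then use the Bieri/Gruenberg/Stammbach theorem that a torsion-free solvable group of finite cohomological dimension has finite Hirsch length, and conclude $h(\Gamma) \le h(\widetilde\Gamma) < \infty$ since $\Gamma$ is a quotient of $\widetilde\Gamma$. Either way, once $h(\Gamma) < \infty$ is secured your remaining steps (additivity of Hirsch length for $1 \to \langle g\rangle \to \Gamma \to \Gamma/\langle g\rangle \to 1$, then $\hd_\IQ \le \underline{\mathrm{hd}} = h$ via Flores--Nucinkis as recalled in Section~\ref{secnjkwe9023}) are correct.
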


\begin{proof}
Since $R$ is a connected, solvable Lie group it has a unique, maximal, normal, connected, nilpotent Lie subgroup \cite[Chapter III.2]{auslander} called the nil-radical of $R$ and denoted by $N$. The quotient $R/N$ is abelian and we have a short exact sequence
\[0 \to N \to R \to R/N \to 0.\]
The kernel of the homomorphism $\Gamma \hookrightarrow R \to R/N$ is exactly $\Gamma \cap N$ and we therefore have the diagram
\[\xymatrix{
0 \ar[r] & N \ar[r] & R \ar[r] & R/N \ar[r] & 0\\
0 \ar[r] & \Gamma \cap N \ar[r] \ar[u] & \Gamma \ar[r] \ar[u] & \Gamma / (\Gamma \cap N) \ar[r] \ar[u] & 0}\]

\begin{description}
\item[1st case: $\bm{g \in N}$.] We have that $g$ is central in $\Gamma \cap N$ since it is central in $\Gamma$, and so we get the short exact sequence
\[0 \to (\Gamma \cap N) / \langle g\rangle \to \Gamma / \langle g\rangle \to \Gamma / \langle g\rangle \big/ (\Gamma \cap N) / \langle g\rangle \to 0.\]
Since we have $\Gamma / \langle g\rangle \big/ (\Gamma \cap N) / \langle g\rangle \cong \Gamma / (\Gamma \cap N)$, which is a discrete subgroup of the connected Lie group $R/N$, we get by Theorem~\ref{thmsdf8912123} that its rational homological dimension is finite.

It remains to show that the rational homological dimension of $(\Gamma \cap N) / \langle g\rangle$ is finite. We have that $\Gamma \cap N$ is a subgroup of $N$ which is nilpotent. So $\Gamma \cap N$ is nilpotent itself, and has finite rational homological dimension by Theorem~\ref{thmsdf8912123} ($\Gamma \cap N$ is a discrete subgroup of the connected Lie group $N$). By \cite[Theorem 2.2]{eckmann} we get that the rational homological dimension of $(\Gamma \cap N) / \langle g\rangle$ is finite.

\item[2nd case: $\bm{g^n \in N}$ for $\bm{n > 1}$, but $\bm{g \notin N}$.] We apply the arguments from the 1st case to the short exact sequence
\[0 \to (\Gamma \cap N) / \langle g^n\rangle \to \Gamma / \langle g^n\rangle \to \Gamma / \langle g^n\rangle \big/ (\Gamma \cap N) / \langle g^n\rangle \to 0\]
and conclude that the rational homological dimension of $\Gamma / \langle g^n\rangle$ is finite. Since the kernel of $\Gamma / \langle g^n\rangle \to \Gamma / \langle g\rangle$ is finite, we conclude that $\hd_\IQ(\Gamma / \langle g\rangle) < \infty$.

\item[Remaining case: $\bm{\langle g\rangle \cap N = \emptyset}$.] >From $\langle g\rangle \cap N = \emptyset$ we conclude that the composition $\Gamma \cap N \to \Gamma \to \Gamma / \langle g\rangle$ is injective. Furthermore, $\Gamma \cap N$ is normal in $\Gamma / \langle g\rangle$, so we get the short exact sequence
\[0 \to \Gamma \cap N \to \Gamma / \langle g\rangle \to \Gamma / \langle g\rangle \big/ (\Gamma \cap N) \to 0.\]

By Theorem~\ref{thmsdf8912123} we know that $\Gamma \cap N$ has finite rational homological dimension.

It remains to show that $\Gamma / \langle g\rangle \big/ (\Gamma \cap N) \cong \Gamma / (\Gamma \cap N) \big/ \langle g\rangle$ has finite rational homological dimension. Now $\Gamma / (\Gamma \cap N)$ is a subgroup of $R/N$ which is an connected, abelian Lie group. So $\Gamma / (\Gamma \cap N)$ itself is abelian and it has finite rational homological dimension by Theorem~\ref{thmsdf8912123}. So by \cite[Theorem 2.2]{eckmann} we get that the rational homological dimension of $\Gamma / (\Gamma \cap N) \big/ \langle g\rangle$ is finite.
\end{description}

This completes the proof that $\hd_{\IQ}(\Gamma / \langle g \rangle) < \infty$ since we have managed to handle all three occuring cases.
\end{proof}

\begin{lem}\label{lemjns4t3}
Let $S$ be a connected, semi-simple Lie group and let $\Gamma < S$ be a discrete subgroup with a central element $g \in \Gamma$ of infinite order.

Then $\hd_{\IQ}(\Gamma / \langle g \rangle) < \infty$.
\end{lem}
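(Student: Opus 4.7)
The plan is to follow the structure of Lemma~\ref{lemjnsdf234t3} closely, now using the discrete center $Z := Z(S)$ of the connected semi-simple Lie group $S$ in place of the nil-radical. Recall that the center of a connected semi-simple Lie group is discrete and hence closed, and that a discrete subgroup $\Gamma$ of a Lie group is automatically closed, so $\langle g\rangle$ is closed in $S$. I will distinguish three cases according to the position of $g$ with respect to $Z$: (1)~$g \in Z$; (2)~$g^n \in Z$ for some $n > 1$ but $g \notin Z$; (3)~$\langle g\rangle \cap Z = \{e\}$.

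In Case~1, since $\langle g\rangle \subset Z$ is closed and central in $S$, the quotient $S/\langle g\rangle$ is an almost connected Lie group and $\Gamma/\langle g\rangle$ embeds in it as a discrete subgroup. Theorem~\ref{thmsdf8912123} then yields $\hd_{\IQ}(\Gamma/\langle g\rangle) < \infty$. In Case~2, I apply Case~1 to the infinite-order central element $g^n$ to obtain $\hd_{\IQ}(\Gamma/\langle g^n\rangle) < \infty$, and then invoke Lemma~\ref{lem:powers} to pass from $\Gamma/\langle g^n\rangle$ to $\Gamma/\langle g\rangle$; this is valid because $g$ central in $\Gamma$ forces $\Gamma = C_{\Gamma}(g) = C_{\Gamma}(g^n)$, matching the notation of that lemma.

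The main difficulty is the remaining Case~3, where $\langle g\rangle \cap Z = \{e\}$. Here I would work inside the centralizer $Z_S(g)$, which is a closed Lie subgroup of $S$ containing $\Gamma$ (since $g$ is central in $\Gamma$). Inside $Z_S(g)$ the subgroup $\langle g\rangle$ is closed (inherited from closedness of $\Gamma$ in $S$) and central, so $Z_S(g)/\langle g\rangle$ is a Lie group and $\Gamma/\langle g\rangle$ sits inside it as a discrete subgroup. If $Z_S(g)$ is almost connected, then so is $Z_S(g)/\langle g\rangle$, and Theorem~\ref{thmsdf8912123} finishes the argument. The hard part will be to establish almost connectedness of $Z_S(g)$; this is a classical consequence of the structure theory of semi-simple Lie groups (for instance via Steinberg's theorem on centralizers in simply-connected semi-simple groups, combined with the Jordan decomposition $g = g_s g_u$ to handle non-semi-simple elements). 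As a fallback, one can always pass to the finite-index subgroup $\Gamma \cap Z_S(g)^\circ$ of $\Gamma$, carry out the above argument inside the connected Lie group $Z_S(g)^\circ$, and recover finiteness of $\hd_{\IQ}(\Gamma/\langle g\rangle)$ via Lemma~\ref{lemjsd9823ds}.
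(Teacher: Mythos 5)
Your Cases 1 and 2 are fine and arguably cleaner than the paper's: the paper runs a short exact sequence through $\Gamma \cap Z$ and $\Gamma/(\Gamma \cap Z)$ and invokes Eckmann's Theorem 2.2 to reduce the rational homological dimension of $(\Gamma\cap Z)/\langle g\rangle$ by one, whereas you simply observe that $S/\langle g\rangle$ is a connected Lie group with $\Gamma/\langle g\rangle$ discrete in it and apply Theorem~\ref{thmsdf8912123} directly. Both are correct; yours is shorter.

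Case 3 is where the real work is, and your argument has a genuine gap. You propose to work inside $Z_S(g)$ and claim that it is almost connected ``as a classical consequence of the structure theory of semi-simple Lie groups.'' But this is not a theorem at the level of generality you need. Steinberg's connectedness theorem concerns simply connected semi-simple \emph{algebraic} groups over an algebraically closed field; it does not apply to arbitrary connected real semi-simple Lie groups, which may fail to be linear. For a non-linear group such as $\widetilde{\mathrm{SL}_2(\IR)}$, the center $Z$ is infinite and $Z \subset Z_S(g)$, but $Z$ need not lie in the identity component of $Z_S(g)$, so $\pi_0(Z_S(g))$ can be infinite. Your fallback --- passing to $\Gamma \cap Z_S(g)^\circ$ and using Lemma~\ref{lemjsd9823ds} --- requires that subgroup to have finite index in $\Gamma$, which again fails if $\Gamma$ hits infinitely many components of $Z_S(g)$; moreover it is not clear that $\langle g\rangle$ (or any nontrivial power of it) lies in $Z_S(g)^\circ$ in the first place, so the reduction ``carry out the argument inside $Z_S(g)^\circ$'' does not obviously make sense.

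The paper's Case 3 sidesteps all of this with a different mechanism: since $Z$ is discrete (as $S$ is semi-simple) and $S$ is connected, the quotient $S/Z$ has trivial center, so its adjoint representation is faithful and $S/Z$ is \emph{linear}. Hence $\Gamma/(\Gamma\cap Z)$ is a linear group over a field of characteristic zero with finite $\hd_\IQ$, and one can apply Eckmann's Theorem~2.4$'$ (for linear groups, not for discrete subgroups of Lie groups) to conclude that $\Gamma/(\Gamma\cap Z)\big/\langle g\rangle$ has finite $\hd_\IQ$; the other end of the short exact sequence, $\Gamma\cap Z$, is handled by Theorem~\ref{thmsdf8912123} alone. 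This is the key idea you are missing: exploit linearity of the adjoint group rather than trying to stay inside $S$ and control components of centralizers.
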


\begin{proof}
Let $Z$ denote the center of $S$ and look at the short exact sequence
\[0 \to Z \to S \to S / Z \to 0.\]
The kernel of the map $\Gamma \hookrightarrow S \to S / Z$ is $\Gamma \cap Z$ and so we have the diagram
\[\xymatrix{
0 \ar[r] & Z \ar[r] & S \ar[r] & S / Z \ar[r] & 0\\
0 \ar[r] & \Gamma \cap Z \ar[r] \ar[u] & \Gamma \ar[r] \ar[u] & \Gamma / (\Gamma \cap Z) \ar[r] \ar[u] & 0}\]

\begin{description}
\item[1st case: $\bm{g \in Z}$.] The element $g$ is central in $\Gamma$ and therefore also in $\Gamma \cap Z$. We look at the short exact sequence
\[0 \to (\Gamma \cap Z) / \langle g\rangle \to \Gamma / \langle g\rangle \to \Gamma / \langle g\rangle \big/ (\Gamma \cap Z) / \langle g\rangle \to 0.\]
We have $\Gamma / \langle g\rangle \big/ (\Gamma \cap Z) / \langle g\rangle \cong \Gamma / (\Gamma \cap Z)$ which is a discrete subgroup of the connected Lie group $S / Z$ and therefore has finite rational homological dimension by Theorem~\ref{thmsdf8912123}.

$\Gamma \cap Z$ is abelian (since it is a subgroup of the center of $S$, which is abelian) and it is a discrete subgroup of the connected Lie group $S$. Because of the latter it has by Theorem~\ref{thmsdf8912123} finite rational homological dimension, and by \cite[Theorem 2.2]{eckmann} we conclude that $(\Gamma \cap Z) / \langle g \rangle$ has finite rational homological dimension.

We conclude that $\hd_\IQ(\Gamma / \langle g\rangle) < \infty$.

\item[2nd case: $\bm{g^n \in Z}$ for $\bm{n > 1}$, but $\bm{g \notin Z}$.] We do the arguments from the 1st case for $g^n$ instead of $g$ and conclude that $\Gamma / \langle g^n\rangle$ has finite rational homological dimension.

>From this it follows that $\hd_\IQ(\Gamma / \langle g\rangle) < \infty$.

\item[Remaining case: $\bm{\langle g\rangle \cap Z = \emptyset}$.] The composition $\Gamma \cap Z \to \Gamma \to \Gamma / \langle g\rangle$ is injective and $\Gamma \cap Z$ is a normal subgroup of $\Gamma / \langle g\rangle$. So we have the short exact sequence
\[0 \to \Gamma \cap Z \to \Gamma / \langle g\rangle \to \Gamma / \langle g\rangle \big/ (\Gamma \cap Z) \to 0.\]

$\Gamma \cap Z$ is a discrete subgroup of the connected Lie group $S$ and therefore has finite rational homological dimension by Theorem~\ref{thmsdf8912123}.

We have the isomorphism $\Gamma / \langle g\rangle \big/ (\Gamma \cap Z) \cong \Gamma / (\Gamma \cap Z) \big/ \langle g\rangle$ and $\Gamma / (\Gamma \cap Z)$ is a subgroup of the Lie group $S/Z$. Since $S$ is semi-simple, its center $Z$ is discrete, and so by \cite[Exercise 7.11(b)]{fulton} the Lie group $S/Z$ has trivial center. Since $S$ is connected, the quotient $S/Z$ is also connected. So the adjoint representation of $S/Z$ is faithful, i.e., $S/Z$ is a linear group.

Therefore the subgroup $\Gamma / (\Gamma \cap Z)$ of $S/Z$ is a linear group over a field of characteristic zero, and $\Gamma / (\Gamma \cap Z)$ has finite rational homological dimension by Theorem~\ref{thmsdf8912123}. By Eckmann \cite[Theorem 2.4']{eckmann} we conclude that $\Gamma / (\Gamma \cap Z) \big/ \langle g\rangle$ has finite rational homological dimension.
\end{description}

This completes the proof that $\hd_{\IQ}(\Gamma / \langle g \rangle) < \infty$ since we have managed to handle all three occuring cases.
\end{proof}

\begin{thm}
Discrete subgroups of almost connected Lie groups satisfy the Conjecture~\ref{conjdsf89023}.
\end{thm}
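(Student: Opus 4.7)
The plan is to mirror the case analysis of the proof of Theorem~\ref{thmaaa003}, substituting ``finite asymptotic dimension'' for ``finite $\hd_\IQ$'' throughout. The inputs already available are Theorem~\ref{thm32d23} (discrete subgroups of almost connected Lie groups have finite asymptotic dimension), Lemma~\ref{lem:asdim}.(3) (the asymptotic-dimension extension inequality, playing the role of the Lyndon--Hochschild--Serre bound), and Lemma~\ref{lem:perm.con.asdim} (subgroup and extension permanence for Conjecture~\ref{conjdsf89023}, playing the role of the Chadha--Passi closure properties of $E(\IQ)$).

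First I would reduce to $G$ connected by passing to the finite-index subgroup $\Gamma^\circ = \Gamma \cap G^\circ$ via Lemma~\ref{lem:asdim}.(1) and Lemma~\ref{lem:perm.con.asdim}. Then, using the solv-radical $R$ of $G$ and intersecting the short exact sequence $0 \to R \to G \to G/R \to 0$ with $\Gamma$, I would split into the three cases $g \in R$, $g^n \in R$ with $g \notin R$, and $\langle g\rangle \cap R = \emptyset$ exactly as in Theorem~\ref{thmaaa003}. The same short exact sequences for $C_g/\langle g\rangle$ (or for $C_g/\langle g^n\rangle$ in the intermediate case, combined with Lemma~\ref{lem:asdim}.(2) to pass back to $C_g/\langle g\rangle$) appear in each case, and by Lemma~\ref{lem:asdim}.(3) it suffices to bound the asymptotic dimension of each factor. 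The factors of the form $C_g/(C_g \cap R)$ and $C_g \cap R$ are discrete subgroups of the connected Lie groups $G/R$ and $R$ respectively, and so Theorem~\ref{thm32d23} gives finite asymptotic dimension for them directly.

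What remains are factors of the form $(C_g \cap R)/\langle g\rangle$ in the solvable setting together with its semi-simple analogue, i.e., we need asymptotic-dimension versions of Lemmas~\ref{lemjnsdf234t3} and \ref{lemjns4t3}. For the solvable analogue I would iterate the same strategy using the nil-radical $N$ of $R$: this reduces the problem to discrete subgroups of connected nilpotent Lie groups, and by Mal'cev's theorem such a subgroup $\Gamma \cap N$ is finitely generated nilpotent with Hirsch length bounded by $\dim N$, so $(\Gamma \cap N)/\langle g\rangle$ is finitely generated nilpotent of finite Hirsch length and therefore of finite asymptotic dimension by Finn-Sell--Wu \cite[Theorem 2.12]{finnsellwu}.

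The main obstacle is the semi-simple analogue of Lemma~\ref{lemjns4t3}: for a discrete subgroup $\Lambda$ of a connected semi-simple Lie group $S$ with a central element $\bar g \in \Lambda$ of infinite order and $\langle \bar g\rangle \cap Z(S) = \emptyset$, we must show $\asdim(\Lambda/\langle \bar g\rangle) < \infty$. In the proof of Lemma~\ref{lemjns4t3} this was reduced via Eckmann's \cite[Theorem 2.4']{eckmann} to known statements about linear groups in characteristic zero, and no off-the-shelf asymptotic-dimension counterpart of that statement is available. My plan would be to exploit that $\Lambda$ is contained in the closed (hence Lie) subgroup $C_S(\bar g) < S$, whose identity component has strictly smaller dimension than $S$, and to set up an induction on $\dim G$ so that the entire scheme above is reapplied to the smaller Lie group $C_S(\bar g)^\circ$ in place of $G$, which contains $\bar g$ in its center by construction.
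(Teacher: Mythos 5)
Your plan agrees with the paper up to the final, decisive step (the asymptotic-dimension analogue of Lemma~\ref{lemjns4t3} in the case $\langle g\rangle \cap Z = \{1\}$), and there you take a genuinely different and in fact cleaner route. The paper reduces to $G$ connected semi-simple with trivial center, embeds $G$ into $\GL(n;\IR)$ via the adjoint representation, and then works geometrically in the $\CAT(0)$ symmetric space $\GL(n;\IR)/O(n;\IR)$, splitting into the cases $g$ semi-simple (handled by the argument of Section~\ref{sec8923jknd}) and $g$ parabolic (handled via fixed points at infinity, the Fujiwara--Nagano--Shioya and Caprace--Lytchak results, and the explicit structure of parabolic and horospherical subgroups $G_\xi$ and $N_\xi$). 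Your proposal replaces all of this with an induction on $\dim G$: since $S/Z$ has trivial center and $\bar g \neq 1$, the closed subgroup $C_{S/Z}(\bar g)$ is proper, hence of strictly smaller dimension, and it contains $\Lambda$ because $\bar g$ is central in $\Lambda$; one then reruns the whole scheme in the smaller group. This avoids the $\CAT(0)$ machinery altogether, and it does terminate, because after one pass through the centralizer trick $\bar g$ becomes central in the ambient Lie group, so the image of $\bar g$ in any subsequent semi-simple quotient lies in its center and one is in the ``first case'' from then on.

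There is, however, a genuine gap you need to close: the inductive statement is formulated for \emph{almost connected} Lie groups (this is what powers the finite-index reductions via Lemma~\ref{lem:asdim} and the passage $\bar g \rightsquigarrow \bar g^n$), but you never verify that $C_{S/Z}(\bar g)$ has only finitely many connected components. Centralizers of elements in a general Lie group can have infinitely many components, so this cannot simply be taken for granted. The fix is available but must be said: when $S$ has trivial center the adjoint representation realizes $S/Z$ as the identity component of the real points of a linear algebraic group, the centralizer of $\bar g$ there is a real algebraic subgroup, and real algebraic groups have finitely many connected components; $C_{S/Z}(\bar g)$ is an open subgroup of that centralizer and hence also almost connected. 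Relatedly, your parenthetical claim that $C_{S/Z}(\bar g)^\circ$ ``contains $\bar g$ in its center by construction'' is not automatic ($\bar g$ need not lie in the identity component); once almost connectedness is established this is repaired by passing to a power $\bar g^m \in C_{S/Z}(\bar g)^\circ$ and invoking the asymptotic-dimension analogue of Lemma~\ref{lem:powers}, but as written the step is not justified.
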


\begin{proof}
Let $\Gamma$ be a discrete subgroup of an almost connected Lie group $G$. Because of Theorem~\ref{thm32d23} we know that $\Gamma$ has finite asymptotic dimension, and therefore any centralizer $C_g$ also has finite asymptotic dimension. Using the arguments from the proof of Theorem~\ref{thmaaa003} we see that in order to show finiteness of asymptotic dimension of the reduced centralizers we can reduce to the cases where $G$ is either solvable or semi-simple.

The solvable case is treated in Section~\ref{secnjkwe9023}. For the semi-simple case we start arguing as in the proof of Lemma~\ref{lemjns4t3}. In the first case of that proof we reduce to the abelian case, i.e., that $\Gamma$ is abelian. That in the abelian case the reduced centralizer also has finite asymptotic dimension now follows from \cite[Corollary 3.3]{ds}.

So we are now in the remaining case of the proof of Lemma~\ref{lemjns4t3}, i.e., the case that $G$ is a connected, semi-simple Lie group with trivial center. In this case we know that the adjoint representation of $G$ is faithful and therefore $G$ is a matrix Lie group, i.e., can be embedded into $\GL(n;\IR)$.

$\GL(n;\IR) / O(n;\IR)$ is a $\CAT(0)$ space \cite[Proposition II.10.33 \& Theorem~II.10.39]{bridson_haefliger}. By \cite[Proposition II.10.61]{bridson_haefliger} we furthermore know that an element $g \in \Gamma$ acts semi-simple if and only if the matrix $g$ is semi-simple, i.e, can be diagonalized in $\GL(n;\IC)$. So if $g$ is semi-simple, then we can finish the argument as in Section~\ref{sec8923jknd} to show that the reduced centralizer $C_g/\langle g \rangle$ has finite asymptotic dimension.

It remains to treat the case that $g$ is a parabolic element. We know that the action of $g$ on the proper $\CAT(0)$ space $X := \GL(n;\IR) / O(n;\IR)$ extends onto its boundary $\partial X$ and since $g$ is parabolic it has at least one fixed point on $\partial X$ \cite[Proposition II.8.25]{bridson_haefliger}. Furthermore, Fujiwara--Nagano--Shioya \cite[Theorem 1.3]{fns} proved that there even exists a fixed point $\xi \in \partial X$ of $g$ that is also a fixed point of the whole centralizer $C_g$. By Caprace--Lytchak \cite[Proof of Corollary 1.5]{cl} we can even assume that we have a sequence in $X$ converging against $\xi$ such that the displacement function of $g$ on this sequence converges to $0$.

Recalling \cite[Definition II.10.62]{bridson_haefliger} we see that $C_g$ is a subgroup of the parabolic subgroup $G_\xi$ associated to $\xi$, which is defined as all the elements of $\GL(n;\IR)$ fixing $\xi$. Furthermore, $\langle g \rangle$ is a subgroup of the horospherical subgroup $N_\xi$ associated to $\xi$ (here we need that $\xi$ is the limit of a sequence in $X$ on which the displacement function of $g$ goes to $0$). This horospherical subgroup $N_\xi$ is a normal subgroup of $G_\xi$ and therefore we have a group homomorphism $C_g / \langle g \rangle \to G_\xi / N_\xi$ with kernel $(C_g \cap N_\xi) / \langle g \rangle$. Now we use the computations of $G_\xi$ and $N_\xi$ \cite[Propositions II.10.64 \& II.10.66]{bridson_haefliger} to conclude that the quotient $G_\xi / N_\xi$ is isomorphic to a product $\prod_{i=1}^k \GL(n_i;\IR)$, where the numbers $k$ and $n_i$ are determined by $\xi$. So the image of the map $C_g / \langle g \rangle \to G_\xi / N_\xi$ is a discrete subgroup of the Lie group $\prod_{i=1}^k \GL(n_i;\IR)$ and therefore has finite asymptotic dimension. So it remains to show that its kernel $(C_g \cap N_\xi) / \langle g \rangle$ has finite asymptotic dimension. But $N_\xi$ is a connected, unipotent Lie group. Since unipotent groups are nilpotent we conclude that $C_g \cap N_\xi$ is a discrete subgroup of a connected, nilpotent Lie group, and for such groups we know that the quotient $(C_g \cap N_\xi) / \langle g \rangle$ has finite asymptotic dimension.
\end{proof}

>From the discussion in this section we conclude that we can handle arithmetic groups, since they are discrete subgroups of almost connected real Lie groups. But note that the properties of arithmetic groups, that we will summarize in the next corollary, are all already known.

\begin{cor}
Arithmetic groups have finite asymptotic dimension, satisfy both Conjectures~\ref{conj:njdsf89} and \ref{conjdsf89023}, and lie in the class $E(\IQ)$ and hence satisfy the Burghelea conjecture.
\end{cor}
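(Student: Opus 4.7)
The plan is to observe that this corollary is essentially immediate from the work already done in this section, once one recalls the basic structural fact that an arithmetic group realizes as a discrete subgroup of an almost connected real Lie group. Concretely, if $\Gamma$ is an arithmetic group, then by definition it is commensurable with $\mathbf{G}(\IZ)$ for some linear algebraic group $\mathbf{G}$ defined over $\IQ$, and $\mathbf{G}(\IZ)$ sits as a discrete subgroup of the real Lie group $\mathbf{G}(\IR)$, which has finitely many connected components and is therefore almost connected. Passing to a finite index subgroup is harmless for all four properties under consideration: asymptotic dimension is invariant under passage to finite index subgroups and supergroups by Lemma~\ref{lem:asdim}, finiteness of $\hd_\IQ$ is preserved by Lemma~\ref{lemjsd9823ds} and Lemma~\ref{lem:efs000}, the class $E(\IQ)$ is closed under commensurability by the results of Chadha--Passi cited above, and the Burghelea conjecture is a consequence of membership in $E(\IQ)$.

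With this embedding in hand, I would simply apply in sequence the four theorems of the preceding section to $\Gamma$ viewed as a discrete subgroup of $\mathbf{G}(\IR)$: Theorem~\ref{thm32d23} gives $\asdim(\Gamma) < \infty$; Theorem~\ref{thmsdf8912123} provides Conjecture~\ref{conj:njdsf89}, i.e.\ $\hd_\IQ(\Gamma) < \infty$ and $\hd(\Gamma) < \infty$ when $\Gamma$ is torsion-free; Theorem~\ref{thmaaa003} shows $\Gamma \in E(\IQ)$, which in turn yields the Burghelea conjecture for $\Gamma$; and finally the last theorem of the section verifies Conjecture~\ref{conjdsf89023}, namely that reduced centralizers $C_g/\langle g\rangle$ have finite asymptotic dimension.

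There is essentially no hard step here: the only content beyond the previously established theorems is the basic identification of arithmetic groups as discrete subgroups of almost connected Lie groups, together with the trivial check that commensurability preserves all relevant properties. For this reason I would present the corollary as a short, direct application of the results of this section, and it is exactly this that justifies the remark in the paper that all the listed properties of arithmetic groups were already known prior to this work.
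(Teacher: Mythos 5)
Your proposal is correct and follows the same route as the paper: identify arithmetic groups as discrete subgroups of almost connected real Lie groups and apply the four theorems of the preceding subsection. The paper leaves the commensurability check implicit, whereas you spell it out; this is a harmless elaboration, not a different argument.
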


\subsection{\texorpdfstring{$3$}{3}-manifold groups}

\begin{thm}
Fundamental groups of compact $3$-manifolds all have finite asymptotic dimension, lie in the class $E(\IQ)$ and satisfy both Conjectures~\ref{conj:njdsf89} and \ref{conjdsf89023}.
\end{thm}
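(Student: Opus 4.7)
The plan is to combine the Kneser--Milnor prime decomposition with the JSJ/geometrization decomposition to reduce $\pi_1(M)$ to geometric pieces we have already handled, and then to reassemble using known permanence properties. First, the prime decomposition writes $\pi_1(M)$ as a finite free product of fundamental groups of prime 3-manifolds, and the three properties in question are each closed under free products (Chadha--Passi for $E(\IQ)$, Lemma~\ref{lem:perm.con.asdim} for Conjecture~\ref{conjdsf89023}, and \cite[Section 14]{bell_dran} for finite asymptotic dimension), so we may reduce to a prime 3-manifold $M$. Passing to the orientation double cover if necessary (all three classes are closed under finite-index overgroups by Lemmas~\ref{lem:asdim}, \ref{lemjsd9823ds}, \ref{lem:perm.con.asdim}, together with the subgroup closure of $E(\IQ)$), we may further assume $M$ is orientable. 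The JSJ decomposition together with Perelman's geometrization then splits $M$ along incompressible tori into geometric pieces, each Seifert fibered or finite-volume hyperbolic, and realizes $\pi_1(M)$ as the fundamental group of a finite graph of groups with edge groups $\cong \IZ^2$ and vertex groups equal to the $\pi_1$'s of the pieces.

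Next I would dispatch the vertex groups. A hyperbolic piece is either closed, so its $\pi_1$ is a word-hyperbolic group, or has cusps, so its $\pi_1$ is toral relatively hyperbolic with peripheral subgroups $\cong \IZ^2$; in either case the preceding subsection shows that the group lies in $E(\IQ)$, has finite asymptotic dimension, and satisfies Conjecture~\ref{conjdsf89023}. A Seifert fibered piece has $\pi_1$ fitting into a central extension
\[
1 \to \IZ \to \pi_1 \to \pi_1^{\mathrm{orb}}(B) \to 1
\]
with $B$ a compact 2-orbifold; $\pi_1^{\mathrm{orb}}(B)$ is virtually a surface group and hence a $\CAT(0)$ group of finite rational homological dimension and finite asymptotic dimension. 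By Subsection~\ref{sec8923jknd} it lies in $E(\IQ)$ and satisfies Conjecture~\ref{conjdsf89023}. The Chadha--Passi extension closure for $E(\IQ)$, Lemma~\ref{lem:perm.con.asdim}.3 for Conjecture~\ref{conjdsf89023}, and Lemma~\ref{lem:asdim}.3 for asymptotic dimension then lift all three properties from $\pi_1^{\mathrm{orb}}(B)$ to the Seifert piece. Non-aspherical prime pieces (such as $S^2 \times S^1$ and spherical space forms) have virtually cyclic or finite fundamental groups, for which everything is trivial.

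Finally I would glue. The $\IZ^2$ edge groups trivially belong to $E(\IQ)$, have finite asymptotic dimension, and satisfy Conjecture~\ref{conjdsf89023}. Ji's closure of $E(\IQ)$ under amalgamated free products and HNN extensions over subgroups in the class \cite[Theorem 4.3(5)]{ji_nilpotency} assembles $\pi_1(M) \in E(\IQ)$, proving the Burghelea conjecture and Conjecture~\ref{conj:njdsf89}.\ref{12342dsf}; for the torsion-free half of Conjecture~\ref{conj:njdsf89} we use that a torsion-free compact 3-manifold group is either trivial, infinite cyclic, or the fundamental group of an aspherical 3-manifold (so $\hd \le 3$). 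Finite asymptotic dimension of a finite graph of groups with finite-asdim vertex and edge groups (Bell--Dranishnikov) gives $\asdim(\pi_1(M)) < \infty$. For Conjecture~\ref{conjdsf89023}, I analyze an infinite-order $g \in \pi_1(M)$ via its action on the Bass--Serre tree of the JSJ splitting: if $g$ is elliptic, it is conjugate into a vertex group and we reduce to the vertex-group case; if $g$ is hyperbolic, $C_g$ is virtually cyclic and $C_g/\langle g\rangle$ is finite.

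I expect the main obstacle to be the centralizer analysis across the JSJ splitting: rigorously showing that the reduced centralizer of each infinite-order element decomposes into pieces already handled by the vertex-group case requires a careful Bass--Serre-theoretic argument about how centralizers interact with $\IZ^2$-edge stabilizers, particularly when $g$ lies in an edge group and its centralizer sees the adjacent vertex groups on both sides.
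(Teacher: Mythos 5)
Your route — prime decomposition, then JSJ/geometrization into geometric pieces, then reassembly via graph-of-groups closure properties — differs substantially from the paper's. The paper reduces to closed, irreducible manifolds and then invokes a single structural result from the Aschenbrenner--Friedl--Wilton book (cited as \cite[Theorem 2.5.1]{f}): in an irreducible $3$-manifold group, the centralizer of any nontrivial element is either abelian of rank at most $2$ or isomorphic to a centralizer in the fundamental group of a Seifert fibered submanifold, which is then dispatched by the classification of Seifert fibered fundamental groups. This neatly bypasses any Bass--Serre analysis along the JSJ tree.

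The gap in your version is exactly where you flagged it, but it is more serious than an implementation detail. Your dichotomy ``elliptic $\Rightarrow$ reduce to a vertex group; hyperbolic $\Rightarrow$ $C_g$ virtually cyclic'' is not correct as stated. For a hyperbolic element, $C_g$ preserves the axis and hence maps to $\IZ$, but the kernel is the elementwise stabilizer of the axis, which sits inside an intersection of $\IZ^2$ edge stabilizers and can be infinite; so $C_g$ is only virtually abelian of rank at most $3$ a priori, not virtually cyclic. More importantly, for $g$ elliptic but contained in an edge group $\IZ^2$ (the generic situation for the Seifert fiber class, say), the centralizer is not conjugate into a single vertex group: $C_g$ acts on the fixed-point subtree of $g$, which can contain many vertices and edges, and so $C_g$ is itself the fundamental group of a nontrivial graph of groups whose vertex groups are centralizers inside the original vertex groups. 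Establishing that this reassembled group lies in $E(\IQ)$ and satisfies Conjecture~\ref{conjdsf89023}, and that the reduced centralizer $C_g/\langle g\rangle$ does too, would require a genuine further argument (essentially reproving a version of the structure theorem the paper cites). As written, the final paragraph of your proposal is a sketch of what needs to be done rather than a proof.
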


\begin{proof}
Let $M$ be a compact $3$-manifold. We can assume that $M$ is orientable. Indeed, if $M$ is not orientable, we take the orientable 
double cover of $M$ and reduce our discussion to the orientable case. By the fact that the class $E(\IQ)$ and the class of groups satisfying Conjectures~\ref{conj:njdsf89} and \ref{conjdsf89023} is closed under taking free products and taking subgroups,
we can make the following reductions:
\begin{itemize}
\item By doubling $M$ along the boundary we see that the group $\pi_1(M)$ 
is a subgroup of the fundamental group of a closed $3$-manifold (\cite[Lem.1.4.3]{f}). 
Thus we can assume that $M$ is closed. 
\item By prime decomposition theorem (\cite[Thm.1.2.1]{f}), every $3$-manifold admits a decomposition as a connected sum of prime manifolds, 
i.e., manifolds which cannot be further decomposed. 
A manifold which is prime and is not $S^2 \times S^1$ is irreducible. 
Thus we can assume that $M$ is irreducible.
\end{itemize}

First we have to check that rational homological dimension and asymptotic dimension of $\pi_1(M)$ is finite. 
Finiteness of $\hd_\IQ$ follows from Papakyriakopoulos' Sphere Theorem which implies that if $M$ is irreducible, 
then it is aspherical or $\pi_1(M)$ is finite. This also shows that the homological dimension of torsion-free fundamental groups of irreducible $3$-manifolds is finite. The finiteness of asymptotic dimension follows from the results of MacKay--Sisto \cite[Theorem 5.1]{mackay_sisto}.

Now we deal with reduced centralizers. By \cite[Theorem 2.5.1]{f} we have that for irreducible $3$-manifolds the centralizer $C_g$ of a nontrivial element $g$ is either abelian of rank at most $2$, or is isomorphic to a centralizer of some Seifert fibered submanifold of~$M$. By \cite[Theorem 1.8.1 \& Table 1]{f} we know the isomorphism types of fundamental groups of Seifert fibered manifolds and all of these types of groups are already treated in this paper or by other authors in earlier papers. Thus these centralizers lie in the class $E(\IQ)$ and satisfy Conjecture~\ref{conjdsf89023}. 
\end{proof}

\section{Counter-examples}\label{sec:counter.examples}

In this section we will construct two kinds of counter-examples to the Burghelea conjecture: 
the first one will be a finitely generated group having finite decomposition complexity and is based on an ad hoc modification of Burghelea's original construction of a counter-example combined with an embedding theorem guaranteeing embeddings of groups into finitely generated groups. 

Recall that a group $G$ is of type $F_\infty$, if there exists a simplicial model of its classifying space $BG$ with finite skeleta in every dimension. This especially implies that $G$ is finitely presentable.
Our second counter-example is of type $F_\infty$. The construction is based on Thompson's group $F$ and the fact that its homology and cohomology is explicitly known, and especially that we may find the (co-)homology of $\IC P^\infty$ inside it.

\subsection{Finite decomposition complexity}\label{sec:fdc}

\paragraph{Construction of the first example} Let us first review Burghelea's own counter-example \cite[Section~IV]{burghelea}. By the Kan--Thurston theorem, there is an aspherical space $X$ and a map $f \colon X \to \IC P^{\infty}$ which is a homology isomorphism, i.e., $f$ induces an isomorphism on all homology groups with arbitrary non-twisted coefficients. Let us now consider the following diagram:
\[\xymatrix{
Y \ar[d] \ar[r]^{\tilde{f}} & S^{\infty} \ar[d] \\
X \ar[r]^f & \IC P^{\infty}}\]

Here $S^{\infty} \to \IC P^{\infty}$ is the universal principal $S^1$-bundle and $Y$ is the pull-back of it along the map $f$. By the long exact sequence for the homotopy groups of a fibration, the space $Y$ is aspherical. Since $f$ is a homology isomorphism, it follows from the Serre spectral sequence that also $\tilde{f} \colon Y \to S^{\infty}$ is a homology equivalence, i.e., $Y$ is acyclic.

Let $G = \pi_1(Y)$ and $H = \pi_1(X)$. Note that the Kan--Thurston theorem can, in this case, produce a torsion-free group $H$, and therefore $G$ will be also torsion-free. Let $x \in G$ be a loop in one of the fibers of $Y$. Then $x$ is central in $G$ since $Y \to X$ is an oriented (even principal) $S^1$-bundle.

So we got a central extension
\[1 \to \IZ \to G \to H \to 1,\]
with $H_{2n}(H;R) \cong R$ and $H_{2n+1}(H;R) \cong 0$ for all $n \ge 0$ and any coefficient ring $R$. Since this central extension corresponds to the pull-back of the fibration $S^\infty \to \IC P^\infty$, we get that the corresponding Gysin homomorphism is an isomorphism, i.e., $T_0^G(x,R) \cong R$ for $x \in G$ being the image of $1 \in \IZ \hookrightarrow G$. So $G$ does not satisfy the Burghelea conjecture.

Summarizing, Burghelea \cite{burghelea} has proven the following result: there exists a central extension of groups
\[1 \to \IZ \to G \to H \to 1\]
such that
\begin{enumerate}
\item $H$ is homology equivalent to $\IC P^\infty$ and $G$ is a counter-example to the Burghelea conjecture, i.e., $T_0^G(x,R) \cong R$ for $x \in G$ being the image of $1 \in \IZ \hookrightarrow G$,
\item $G$ is acyclic, i.e., all its homology groups (in degrees $\ge 1$) with trivial coefficients vanish, and
\item both $G$ and $H$ are torsion-free.
\end{enumerate}

\paragraph{Finite decomposition complexity} Let us now show that we can carry out the above construction such that the group $H$ will additionally have finite decomposition complexity (Guentner--Tessera--Yu \cite{fdc_1}).

This will follow from a version of the Kan--Thurston theorem due to Leary \cite{leary.kan-thurston}: it is possible to construct the space $X$ such that it is a locally $\CAT(0)$ cubical complex, and such that it is the union of totally geodesic, locally $\CAT(0)$ sub-complexes $X^{(n)}$ which are homology equivalent to the sub-complexes $\IC P^n \subset \IC P^\infty$, are finite complexes and have dimension $2n$ (with the exception that $X^{(1)}$ has dimension $3$).

It follows that we have $H = \pi_1(X) \cong \colim \pi_1(X^{(n)})$, and Leary's construction also has the property that the structure maps in this colimit are injective. Since each complex $X^{(n)}$ is locally $\CAT(0)$ (and therefore its universal cover is contractible) and finite, we get that the groups $\pi_1(X^{(n)})$ are all torsion-free, and therefore also $H$. Note that $H$ is not necessarily of type $F_\infty$ since going from $X^{(n)}$ to $X^{(n+1)}$ attaches not only high-dimensional cubes in Leary's construction.

By a result of Wright \cite{wright_cube} we get that each $\pi_1(X^{(n)})$ has finite asymptotic dimension, and therefore $H$ itself has finite decomposition complexity (here we use that the structure maps of the colimit are injective, \cite{fdc}).

It follows that also the group $G$ is torsion-free and has finite decomposition complexity.

\paragraph{Finite generation}\label{par:fin.gen}

We will modify now the above contruction to get finitely generated counter-examples. Our modification will be an additional step, namely embedding the group $H$ into a finitely generated group $H_1$ while preserving its homological properties.

Let $F_2 = \langle a,b \rangle$ be the free group on two generators and consider a subgroup $F_{\infty} < F_2$ of infinite rank. Let $\{x_1,x_2,\ldots\}$ be a free basis of $F_{\infty}$. Now consider the group $H \asterisk F_2$ and let $\{h_1,h_2,\ldots\}$ enumerate all elements in $H$. Let $F'_{\infty} = \langle h_1 x_1,h_2 x_2,\ldots \rangle < H \asterisk F_2$. This group $F'_{\infty}$ is free and the indicated set of generators is a free basis of it. Let $\alpha \colon F_{\infty} \to F'_{\infty}$ be defined on the generators by $\alpha(x_i)=h_i x_i$. We define now $H_1$ to be the HNN extension of $H \asterisk F_2$ with respect to $\alpha$. The group $H_1$ is generated by $a,b,t$ and contains $H$ as a subgroup. It is also torsion-free (since it is an HNN extension of a torsion-free group) and still has finite decomposition complexity (since finite decomposition complexity is preserved by free products and HNN extensions).

Let us now show that the homology equivalence $f \colon BH \to \IC P^\infty$ extends over the map $BH \to BH_1$. Since $\IC P^\infty = K(\IZ,2)$, there is only one obstruction cocycle against such an extension. This cocycles lives in $H^3(H_1,H; \IZ)$. We will show further below that $H^3(H_1,H; \IZ) = 0$ and therefore the obstruction cocycle vanishes. Let us denote the extension of $f$ to $BH_1$ by $f_1 \colon BH_1 \to \IC P^\infty$. Using this map we can pull-back the principal $S^1$-bundle $S^\infty \to \IC P^\infty$ to $BH_1$. This produces as before a central extension of groups $1 \to \IZ \to G_1 \to H_1 \to 1$ and $G_1$ will be torsion-free, of finite decomposition complexity, and finitely generated.

The classifying space for $F_2$ can be chosen to be $1$-dimensional, and therefore, using the Mayer--Vietoris sequence for the cohomology of $H \asterisk F_2$ (see, e.g., \cite[Section VII.9]{brown}) we get $H^3(H;\IZ) \cong H^3(H \asterisk F_2;\IZ)$. Furthermore, using the long exact sequence for the cohomology of HNN extensions (see again \cite[Section VII.9]{brown}), and that the classifying space for $F_\infty$ can also be chosen to be $1$-dimensional, we get $H^3(H_1; \IZ) \cong H^3(H \asterisk F_2; \IZ)$. This shows $H^3(H_1,H; \IZ) = 0$ as claimed above.

The above arguments also show that $H_n(H;R) \cong H_n(H_1;R)$ for all $n \ge 3$ and all trivial coefficient rings $R$. Since the $S^1$-bundle over $BH$ is the pull-back of the $S^1$-bundle over $BH_1$, we conclude that $G_1$ is still a counter-example to the Burghelea conjecture. Furthermore, the Lyndon--Hochschild--Serre spectral sequence \eqref{eqjnk23fw} gives $H_n(G_1;R) = 0$ for all $n \ge 4$.

So we have proved that there is a central extension of groups
\begin{equation}
\label{eqjn22000r}
1 \to \IZ \to G_1 \to H_1 \to 1,
\end{equation}
such that:
\begin{enumerate}
\item Both groups $G_1$ and $H_1$ are finitely generated, torsion-free and have finite decomposition complexity.
\item $G_1$ is a counter-example to the Burghelea conjecture in the sense that $T_0^{G_1}(x,R) \cong R$ for $x \in G_1$ being the image of $1 \in \IZ \hookrightarrow G_1$.
\item We have $H_n(G_1;R) = 0$ for all $n \ge 4$ and coefficient rings $R$.
\end{enumerate}

\subsection{Thompson's group \texorpdfstring{$F$}{F}}\label{sec:F}

Brown \cite{brown_F} computed the homology and cohomology of Thompson's group $F$: there is a map $F \to [F,F] \times F_\ab$ which is both a homology and cohomology isomorphism.

We have $F_\ab \cong \IZ \times \IZ$ and the homology $H_\ast([F,F]; \IZ)$ admits a ring structure under which it is isomorphic to the polynomial ring $\IZ[t]$, where $t \in H_2([F,F]; \IZ)$. Furthermore, the cohomology ring $H^\ast([F,F]; \IZ)$ is isomorphic to the divided polynomial ring $\Gamma(u)$ with $\deg(u) = 2$. Recall that the latter is the subring of the polynomial ring $\IQ[u]$ generated additively by the elements $u^{(i)} = u^i / i!$ for $i \ge 0$. Since $u^{(i)} u^{(j)} = \binom{i+j}{i} u^{(i+j)}$ we see that the $\IZ$-span of the $u^{(i)}$, $i \ge 0$, is indeed a ring.

The cap-product with the element $u \otimes 1 \in H^2([F,F]; \IZ) \otimes H^0(F_\ab; \IZ) \hookrightarrow H^2(F; \IZ)$ acts on $H_\ast(F; \IZ) \cong H_\ast([F,F]; \IZ) \otimes H_\ast(F_\ab; \IZ)$ as $t^k \otimes y \mapsto k\cdot t^{k-1} \otimes y$, where $y \in H_\ast(F_\ab; \IZ)$ is arbitrary. So if now
\begin{equation}
\label{eqn2332}
1 \to \IZ \to G \to F \to 1
\end{equation}
is the central extension which is classified by $u \otimes 1 \in H^2(F; \IZ)$, then we see that $G$ is a counter-example to the Burghelea conjecture: using $\IQ$-coefficients the cap-product with $u \otimes 1$ is an isomorphism $H_\ast(F; \IQ) \to H_{\ast-2}(F; \IQ)$, and these groups are isomorphic to $\IQ \times \IQ$. Furthermore, the cap-product with $u \otimes 1$ is exactly the operator $S$ appearing in the definition of $T_\ast^{G}(x,\IQ)$, see \eqref{eqnjk3498fe}. Therefore $T_\ast^{G}(x,\IQ) \cong \IQ \times \IQ$ for $x \in G$ being the image of $1 \in \IZ \hookrightarrow G$ and for both $\ast=0,1$.

Note that this central extension corresponds to an oriented $S^1$-bundle $BG \to BF$. Since every $S^1$-bundle admits the structure of a principal $S^1$-bundle, for a proof of this see, e.g., Morita \cite[Proposition~6.15]{morita}, we conclude that there is a map $\phi \colon BF \to \IC P^\infty$ classifying this principal $S^1$-bundle.

It is known that $F$ is torsion-free and of type $F_\infty$, see Brown--Geoghegan \cite{brown_geoghegan}. 
This implies that $G$ is also of type $F_\infty$ (\cite[Proposition~2.7]{bieri} and \cite[Corollary~1.12]{bieri}).

To compute $H_*(G;\IQ)$ we use the Gysin exact sequence 
\begin{equation*}
\ldots \to H_{n+2}(G;\IQ) \to H_{n+2}(F;\IQ) \xrightarrow{-\cap (u \otimes 1)} H_n(F;\IQ) \to H_{n+1}(G;\IQ) \to \ldots
\end{equation*}
and conclude that $H_n(G;\IQ) = 0$ for $n \ge 3$.

Summarizing the above, we have constructed a central extension \eqref{eqn2332} with the following properties:
\begin{enumerate}
\item both groups $G$ and $F$ are torsion-free and of type $F_\infty$,
\item $G$ is a counter-example to the Burghelea conjecture since $T_\ast^{G}(x,\IQ) \cong \IQ \times \IQ$ for $x \in G$ being the image of $1 \in \IZ \hookrightarrow G$ and for both $\ast=0,1$, and
\item we have $H_n(G;\IQ) = 0$ for all $n \ge 3$.
\end{enumerate}

\bibliography{./burghelea}
\bibliographystyle{amsalpha}

\end{document}